\newtheorem{theorem}{{\sc Theorem}}[section]
\newtheorem{lemma}[theorem]{{\sc Lemma}}
\newtheorem{corollary}[theorem]{Corollary}
\newtheorem{remark}[theorem]{Remark}
\newtheorem{definition}[theorem]{Definition}
\newcommand{\bb}[1]{\mathbb{ #1}}
\newcommand{\dOm}{\partial\Omega}
\newcommand{\bra}[1]{\overline{#1}}
\newcommand{\Trc}{\mathrm{Tr}\,}
\newcommand{\tns}[1]{#1\otimes #1}
\newcommand{\hf}{\displaystyle\frac{1}{2}}
\newcommand{\nth}[1]{\displaystyle\frac{1}{#1}}
\newcommand{\dif}[2]{\displaystyle\frac{\partial #1}{\partial #2}}
\newcommand{\Grad}{\nabla}
\newcommand{\Div}{\nabla \cdot}
\renewcommand{\Hat}[1]{\widehat{#1}}
\newcommand{\Tld}[1]{\widetilde{#1}}
\newcommand{\Av}[1]{\skew{33}-{\int_{#1}}}
\newcommand{\re}{\Re\mathfrak{e}}
\newcommand{\mat}[4]{\left[\begin{array}{cc}
\displaystyle{#1}&\displaystyle{#2}\\[1ex]
\displaystyle{#3}&\displaystyle{#4}\end{array}\right]}
\newcommand{\lims}{\mathop{\overline\lim}}
\newcommand{\limi}{\mathop{\underline\lim}}
\newcommand{\bc}{boundary condition}
\newcommand{\rhs}{right-hand side}
\newcommand{\IFF}{if and only if }
\newcommand{\Ga}{\alpha}
\newcommand{\Gb}{\beta}
\newcommand{\Gd}{\delta}
\newcommand{\Ge}{\epsilon}
\newcommand{\Gvf}{\varphi}
\newcommand{\Gg}{\gamma}
\newcommand{\Gl}{\lambda}
\newcommand{\Gth}{\theta}
\newcommand{\Gs}{\sigma}
\newcommand{\GD}{\Delta}
\newcommand{\GG}{\Gamma}
\newcommand{\GL}{\Lambda}
\newcommand{\GO}{\Omega}
\bmdefine\BGa{\alpha}
\bmdefine\BGb{\beta}
\bmdefine\BGd{\delta}
\bmdefine\BGe{\epsilon}
\bmdefine\BGve{\varepsilon}
\bmdefine\BGf{\phi}
\bmdefine\BGvf{\varphi}
\bmdefine\BGg{\gamma}
\bmdefine\BGc{\chi}
\bmdefine\BGi{\iota}
\bmdefine\BGk{\kappa}
\bmdefine\BGl{\lambda}
\bmdefine\BGn{\eta}
\bmdefine\BGm{\mu}
\bmdefine\BGv{\nu}
\bmdefine\BGp{\pi}
\bmdefine\BGth{\theta}
\bmdefine\BGvth{\vartheta}
\bmdefine\BGr{\rho}
\bmdefine\BGvr{\varrho}
\bmdefine\BGs{\sigma}
\bmdefine\BGvs{\varsigma}
\bmdefine\BGt{\tau}
\bmdefine\BGj{\tau}
\bmdefine\BGu{\upsilon}
\bmdefine\BGo{\omega}
\bmdefine\BGx{\xi}
\bmdefine\BGy{\psi}
\bmdefine\BGz{\zeta}
\bmdefine\BGD{\Delta}
\bmdefine\BGF{\Phi}
\bmdefine\BGG{\Gamma}
\bmdefine\BGL{\Lambda}
\bmdefine\BGP{\Pi}
\bmdefine\BGT{\Theta}
\bmdefine\BGS{\Sigma}
\bmdefine\BGU{\Upsilon}
\bmdefine\BGO{\Omega}
\bmdefine\BGX{\Xi}
\bmdefine\BGY{\Psi}
\newcommand{\CA}{{\mathcal A}}
\newcommand{\CB}{{\mathcal B}}
\newcommand{\CC}{{\mathcal C}}
\newcommand{\CE}{{\mathcal E}}
\newcommand{\CF}{{\mathcal F}}
\newcommand{\CL}{{\mathcal L}}
\newcommand{\CP}{{\mathcal P}}
\newcommand{\CU}{{\mathcal U}}
\bmdefine\BCA{{\mathcal A}}
\bmdefine\BCB{{\mathcal B}}
\bmdefine\BCC{{\mathcal C}}
\bmdefine\BCD{{\mathcal D}}
\bmdefine\BCE{{\mathcal E}}
\bmdefine\BCF{{\mathcal F}}
\bmdefine\BCG{{\mathcal G}}
\bmdefine\BCH{{\mathcal H}}
\bmdefine\BCI{{\mathcal I}}
\bmdefine\BCJ{{\mathcal J}}
\bmdefine\BCK{{\mathcal K}}
\bmdefine\BCL{{\mathcal L}}
\bmdefine\BCM{{\mathcal M}}
\bmdefine\BCN{{\mathcal N}}
\bmdefine\BCO{{\mathcal O}}
\bmdefine\BCP{{\mathcal P}}
\bmdefine\BCQ{{\mathcal Q}}
\bmdefine\BCR{{\mathcal R}}
\bmdefine\BCS{{\mathcal S}}
\bmdefine\BCT{{\mathcal T}}
\bmdefine\BCU{{\mathcal U}}
\bmdefine\BCV{{\mathcal V}}
\bmdefine\BCW{{\mathcal W}}
\bmdefine\BCX{{\mathcal X}}
\bmdefine\BCY{{\mathcal Y}}
\bmdefine\BCZ{{\mathcal Z}}
\bmdefine\Bzr{ 0}
\bmdefine\Ba{ a}
\bmdefine\Bb{ b}
\bmdefine\Bc{ c}
\bmdefine\Bd{ d}
\bmdefine\Be{ e}
\bmdefine\Bf{ f}
\bmdefine\Bg{ g}
\bmdefine\Bh{ h}
\bmdefine\Bi{ i}
\bmdefine\Bj{ j}
\bmdefine\Bk{ k}
\bmdefine\Bl{ l}
\bmdefine\Bm{ m}
\bmdefine\Bn{ n}
\bmdefine\Bo{ o}
\bmdefine\Bp{ p}
\bmdefine\Bq{ q}
\bmdefine\Br{ r}
\bmdefine\Bs{ s}
\bmdefine\Bt{ t}
\bmdefine\Bu{ u}
\bmdefine\Bv{ v}
\bmdefine\Bw{ w}
\bmdefine\Bx{ x}
\bmdefine\By{ y}
\bmdefine\Bz{ z}
\bmdefine\BA{ A}
\bmdefine\BB{ B}
\bmdefine\BC{ C}
\bmdefine\BD{ D}
\bmdefine\BE{ E}
\bmdefine\BF{ F}
\bmdefine\BG{ G}
\bmdefine\BH{ H}
\bmdefine\BI{ I}
\bmdefine\BJ{ J}
\bmdefine\BK{ K}
\bmdefine\BL{ L}
\bmdefine\BM{ M}
\bmdefine\BN{ N}
\bmdefine\BO{ O}
\bmdefine\BP{ P}
\bmdefine\BQ{ Q}
\bmdefine\BR{ R}
\bmdefine\BS{ S}
\bmdefine\BT{ T}
\bmdefine\BU{ U}
\bmdefine\BV{ V}
\bmdefine\BW{ W}
\bmdefine\BX{ X}
\bmdefine\BY{ Y}
\bmdefine\BZ{ Z}
\newcommand{\SFL}{\mathsf{L}}
\title{Rigorous asymptotic analysis of buckling of thin-walled cylinders under
  axial compression}
\author{Yury Grabovsky \and Davit Harutyunyan}
\newcommand{\ud}{\,\mathrm{d}}
\begin{document}
\maketitle
\begin{abstract}
  Using rigorous constitutive linearization of second variation introduced in
  \cite{grtr07} we study weak
  stability of homogeneous deformation of the axially compressed circular cylindrical
  shell, regarded as a 3-dimensional hyperelastic body. We show that such
  deformation becomes weakly unstable at the citical load that coincides with
  value of the bifurcation load in von-K\'arm\'an-Donnel shell
  theory. We also show that the linear bifurcation modes described by the
  Koiter circle \cite{koit45} minimize the second variation asymptotically. The key
  ingredients of our analysis are the asymptoticaly sharp estimates of the Korn constant for
  cylindrical shells and Korn-like inequalities on components of the
  deformation gradient tensor in cylindrical coordinates. The notion of buckling equivalence
  introduced in \cite{grtr07} is developed further and becomes central in this
  work. A link between features of this theory and sensitivity of the critical
  load to imprefections of load and shape is conjectured. 
\end{abstract}


\section{Introduction}
\label{sec:intro}
Recent years have seen significant progress in rigorous analysis of
dimensionally reduced theories of plates and shells based on $\GG$-convergence
\cite{fjm02,momu04,fjm06,lmap2011,lmop2011}. In the framework of these
theories one must postulate the scaling of energy and the forces apriori,
whereby different scaling assumptions lead to different dimensionally reduced
plate and shell theories. By contrast \cite{grtr07} has no need for such
apriori assumptions, while pursuing a less general goal of identifying a
critical load at which the trivial branch of equilibria becomes weakly
unstable. This exclusive targeting of the instability without the attempt to capture
post-buckling behavior leads to significant technical simplifications in a
rigorous analysis of the safe load problem for slender structures. Present
work builds on the ideas of \cite{grtr07} and applies them to buckling of a
circular cylindrical shell under axial compression.

The asymptotics of the critical buckling load predicted by the sign of the
3-dimensional second variation agrees with the classical value in the shell
theory \cite{lorenz11,timosh14,koit45}. The
displacement variations that minimize the normalized second variation of the
energy are single Fourier modes, whose wave numbers lie on Koiter's circle
\cite{koit45}. Using the notion of buckling equivalence \cite{grtr07} we link
the classical variational problem for the buckling load obtained from the
shell theory and the rigorous analysis of the sign of the 3-dimensional
second variation of the non-linear elastic energy.

The problem of buckling of axially compressed cylindrical shells occupies a
special place in engineering. Cylindrical shells are light weight structures
with superior load carrying capacity in axial direction as compared to plates
of the same thickness. They are ubiquitous in industry. Yet, the classical
theoretical value of the buckling load is about 4 to 5 times higher than the
one observed in experiments \cite{call2000}. This is understood as a
manifestation of the sensitivity of the buckling load to imperfections of load
and shape \cite{almr63,tenn64,wms65,goev70,yama84}. The general understanding
of the sensitivity of the buckling load to imperfections is via the 
bifurcation theory applied to von-K\'arm\'an-Donnell equations
\cite{hune91,hune93,lch97,hlc99}. The subcritical nature of the bifurcation
\cite{hune91,hlc99}, where the load drops sharply at the critical load is
responsible for the observed discrepancy between the theoretical and apparent
value of the critical load. It is important to note that the formal
asymptotics leading to the theoretical value of the critical load gives no
indication of the nature of bifurcation and the resultant imperfection sensitivity.

Our approach to buckling of a circular cylindrical shell, along with the
rigorous derivation of the classical buckling load, may offer an alternative
interpretation of the sensitivity of the critical load to imperfections. Our
analysis makes apparent three different asymptotics of the buckling load, only
one of which (the one with the largest critical load) is realized in a perfect
uniformly axially compressed cylindrical shell. Imprefections of load and
shape lead to small perturbations in a trivial branch that are shown to be
sufficient to change the asymptotics of the buckling load. While the rigorous analysis
requires more work and is left to future studies, we conjecture that the two
other buckling loads that are significantly lower that the classical one could
lead to a more transparent explanation of imperfection sensitivity.

Recall that a stable configuration $\By=\By(\Bx)$,
$\Bx\in\GO\subset\bb{R}^{3}$ must be a weak local minimizer of the energy
\[
\int_{\GO}W(\Grad\By)d\Bx-\int_{\dOm}(\By,\Bt(\Bx))dS(\Bx),
\]
where $W(\BF)$ is the energy density function of the body and $\Bt(\Bx)$ is
the vector of dead load tractions. The essential non-linearity of buckling
comes from the principle of frame indifference $W(\BR\BF)=W(\BF)$ for all
$\BR\in SO(3)$, combined with the assumption of the absence of residual stress
$W_{\BF}(\BI)=\Bzr$. For slender bodies these assumptions are fundamental for
the computation of the constitutive linearization, which is based on the fact
that the stresses are small at every point in the body right up to the
buckling point, and therefore, the material response can be linearized
\emph{locally}. The impossibility of the geometric linearization due to the
distributed nature of local rotations is the essential feature of slender
bodies.

The constitutively linearized problem can be viewed as the variational
formulation of the linear eigenvalue problem at the bifurcation point. As such
it permits an extra degree of flexibility, as one can replace one variational
formulation with an asymptotically equivalent one \cite{grtr07}. This
flexibility is used in this paper to simplify and eliminate heavy algebraic
calculations for the perfect cylinder.

This paper is organized as follows. In Section~\ref{sec:genth} we extend the
general theory of buckling developed in \cite{grtr07}, so that it applies to
more general 3-dimensional bodies, including cylindrical shells. We define an
equivalence class of functionals characterizing buckling and give criteria for
showing that a pair of functionals belongs to the same equivalence class. In
Section~\ref{sec:KI} we discuss the asymptotics of the Korn constant for
cylindrical shells. The technical details of the proof are in
Appendix~\ref{app:KI}. In Section~\ref{sec:KLI} we prove Korn-like
inequalities where the linear strain is bounded in terms of the specific
components if the displacement gradient. In Section~\ref{sec:trbr} we compute
the compressive part of the constitutively linearized functional whose
destabilizing action is ultimately responsible for buckling when the load
reaches critical.  In Section~\ref{sec:perfect} we apply the general theory of
buckling from Section~\ref{sec:genth} to axially compressed perfect
cylindrical shells and derive the formula for the buckling load, as well as a
collection of buckling modes parametrized by points on the Koiter's circle
\cite{koit45}. In Section~\ref{sec:altbc} we show that the more realistic but
also more technically challenging \bc s, where displacements are prescribed on
the top and bottom boundaries of the shell produce exactly the same buckling
load. This is achieved by exploiting the massive non-uniqueness of the
buckling modes for the the prescribed average vertical displacement \bc s of
Section~\ref{sec:perfect}. In Section~\ref{sec:conc} we comment on the
possibility of the link between the sensitivity to imperfection of the
buckling load of a slender structure and the presence of ``latent'' buckling
modes with significantly smaller critical loads.

\section{Buckling of slender structures}
\setcounter{equation}{0}
\label{sec:genth}
Here we revisit the general theory of buckling developed in \cite{grtr07}. The
theory deals with a sequence of progressively slender domains $\GO_{h}$
parametrized by a dimensionless parameter $h$. For example, in the case of the
cylindrical shell, $h$ is the ratio of cylinder wall thickness to the cylinder
radius. We consider a loading program parametrized by the loading parameter
$\Gl$ describing the magnitude of the applied tractions
$\Bt(\Bx;h,\Gl)=\Gl\Bt^{h}(\Bx)+O(\Gl^{2})$, as $\Gl\to 0$. Here and below
$O(\cdot)$ is understood uniformly in $\Bx\in\GO_{h}$ and $h\in[0,h_{0}]$. Two
fundamental assumptions need to be made in order for the general theory of
buckling to be applicable\footnote{Some relaxation of the uniformity
  in $h$ assumption might be necessary in order to bring our theory to bear on
the question of sensitivity of the buckling load to imperfections}.

The first fundamental assumption requires the \emph{existence} of the family of
equilibrium deformations $\By(\Bx;h,\Gl)$, corresponding to the applied loads
$\Bt(\Bx;h,\Gl)$ and satisfying the imposed \bc s, for any $h\in[0,h_{0}]$ and
$\Gl\in[0,\Gl_{0}]$, where $h_{0}>0$ and $\Gl_{0}>0$ are some constants.  Such
a family of equilibria will be called a ``trivial branch''. Neither uniqueness
nor its stability is assumed.

The second fundamental assumption is the absence of ``bending modes'' in the
trivial branch. Here we use the term ``bending'' loosely to indicate any
response in which the strain to stress ratio becomes infinitely large as $h\to
0$ even for small applied stress. Formally we assume that the trivial branch
lies uniformly close to the linearly elastic response:
\begin{equation}
  \label{fundass}
  \sup_{0\le h\le h_{0}}\|\Grad\By(\Bx;h,\Gl)-\BI-\Gl\Grad\Bu^{h}(\Bx)\|_{L^{\infty}(\GO_{h})}\le
C\Gl^{2},\qquad\Bu^{h}(\Bx)=\left.\dif{\By(\Bx;h,\Gl)}{\Gl}\right|_{\Gl=0},
\end{equation}
when $0\le\Gl\le\Gl_{0}$ and the constant $C$ is independent of $h$. 

In \cite{grtr07} we have defined the notion of the near-flip buckling when
for any $h\in[0,h_{0}]$ the trivial branch becomes unstable for $\Gl>\Gl(h)$,
where $\Gl(h)\to 0$, as $h\to 0$. This happens because it becomes
energetically more advantageous to activate bending modes rather than store
more compressive stress.

In hyperelasticity the trivial branch $\By(\Bx;h,\Gl)$ is a critical point of
the energy functional
\begin{equation}
  \label{energy}
\CE(\By)=\int_{\GO_{h}}W(\Grad\By)d\Bx-\int_{\dOm_{h}}(\Bt(\Bx;h,\Gl),\By)dS.
\end{equation}
In general we restrict $\By$ to an affine subspace of
$W^{1,\infty}(\GO_{h};\bb{R}^{3})$ given by
\begin{equation}
  \label{hdbc}
  \By\in\bra{\By}(\Bx;h,\Gl)+V_{h}^{\circ},
\end{equation}
where $V_{h}^{\circ}$ is a linear subspace of
$W^{1,\infty}(\GO_{h};\bb{R}^{3})$ that contains
$W_{0}^{1,\infty}(\GO_{h};\bb{R}^{3})$ and does not depend on the loading
parameter $\Gl$. The given function $\bra{\By}(\Bx;h,\Gl)\in
W^{1,\infty}(\GO_{h};\bb{R}^{3})$ describes the Dirichlet part of the \bc s,
while the traction vector $\Bt(\Bx;h,\Gl)$ describes the
Neumann-part\footnote{The use of a general subspace $V_{h}$ permits one to
  describe loadings in which desired linear combinations of the
  displacement and traction components are prescribed on the boundary.}.

The equilibrium equations and the \bc s satisfied by the trivial branch
$\By(\Bx;h,\Gl)$ can be written explicitly only in the weak form:
\begin{equation}
  \label{ELwk}
  \int_{\GO_{h}}(W_{\BF}(\Grad\By(\Bx;h,\Gl)),\Grad\BGf)d\Bx
-\int_{\dOm_{h}}(\Bt(\Bx;h,\Gl),\BGf)dS=0
\end{equation}
for every $\BGf\in V_{h}$, where $V_{h}$ is a closure of $V_{h}^{\circ}$ in
$W^{1,2}(\GO_{h};\bb{R}^{3})$.
Differentiating (\ref{ELwk}) in $\Gl$ at $\Gl=0$, which is allowed due to
(\ref{fundass}), we obtain
\begin{equation}
  \label{linel}
  \int_{\GO_{h}}(\SFL_{0}\Grad\Bu^{h}(\Bx),\Grad\BGf)d\Bx
-\int_{\dOm_{h}}(\Bt^{h}(\Bx),\BGf)dS=0,\qquad\BGf\in V_{h}
\end{equation}
where $\SFL_{0}=W_{\BF\BF}(\BI)$.

The energy density $W(\BF)$ satisfies two fundamental assumptions:
\begin{itemize}
\item[(P1)] Absence of prestress: $W_{\BF}(\BI)=\Bzr$;
\item[(P2)] Frame indifference: $W(\BF\BR)=W(\BF)$ for every $\BR\in SO(3)$;
\item[(P3)] Positive semidefiniteness property $(\SFL_{0}\BGx,\BGx)\ge 0$ for any
  $\BGx\in\bb{R}^{3\times 3}$;
\item[(P4)] Non-degeneracy $(\SFL_{0}\BGx,\BGx)=0$ \IFF $\BGx^{T}=-\BGx$.
\end{itemize}
By the properties (P3)--(P4) of $\SFL_{0}$, there exists $\Ga_{\SFL_{0}}>0$,
such that
\begin{equation}
  \label{Lcoerc}
  (\SFL_{0}\BGx,\BGx)\ge\Ga_{\SFL_{0}}|\BGx_{\rm sym}|^{2},\qquad\BGx_{\rm
    sym}=\hf(\BGx+\BGx^{T}).
\end{equation}

The buckling is detected by the second variation of energy
\[
\Gd^{2}\CE(\BGf;h,\Gl)=\int_{\GO_{h}}(W_{\BF\BF}(\Grad\By(\Bx;h,\Gl))\Grad\BGf,\Grad\BGf)d\Bx.
\]
The second variation is always non negative, when $0<\Gl<\Gl(h)$ and can become
negative for some choice of the admissible variation $\BGf\in V_{h}$, when
$\Gl>\Gl(h)$. It was understood in \cite{grtr07} that this failure of weak
stability is due to the properties (P1)--(P4) of $W(\BF)$ and is intimately
related to flip instability in soft device.  It was shown in \cite{grtr07}
that in an ``almost soft'' device the critical load could be captured (under
some assumptions) by the \emph{constitutively linearized} second variation
\begin{equation}
  \label{ccsv}
\Gd^{2}\CE_{cl}(\BGf;h,\Gl)=\int_{\GO_{h}}\{(\SFL_{0}e(\BGf),e(\BGf))-
\Gl(\BGs_{h},\Grad\BGf^{T}\Grad\BGf)\}d\Bx,
\end{equation}
where 
\begin{equation}
  \label{minusstress}
 \BGs_{h}(\Bx)=-\SFL_{0}e(\Bu^{h}(\Bx)) ,\qquad e(\Bu)=\hf(\Grad\Bu+(\Grad\Bu)^{T}).
\end{equation}
Observe, that $\BGs^{h}$ is \emph{minus} the linear stress in the
body. This notation is convenient when we are dealing exclusively with compressive
stresses. The larger the stress, the more compressive the load.
Let
\begin{equation}
  \label{Adef}
\CA_{h}=\left\{\BGf\in V_{h}:\int_{\GO_{h}}(\BGs_{h},\Grad\BGf^{T}\Grad\BGf)d\Bx>0\right\}
\end{equation}
be the set of all destabilizing variations (see (\ref{ccsv})).
\begin{definition}
  \label{def:wkcompr}
We say that the loading is \textbf{weakly compressive} if there exists
$h_{0}>0$ so that $\CA_{h}\not=\emptyset$, for all $h<h_{0}$.
\end{definition}
If the stress is not weakly compressive, then both terms in (\ref{ccsv})
are non-negative for all $\BGf$ and the instability is clearly impossible. If
the loading is weakly compressive then the two terms in (\ref{ccsv}) have
opposite signs for all variations $\BGf\in\CA_{h}$. The linearized second
variation can then become negative for sufficiently large $\Gl$. However, this
does not immediately imply negativity of the second variation
$\Gd^{2}\CE(\BGf;h,\Gl)$. To examine the sign of the second variation we
consider the function
\[
m^{*}(h,\Gl)=\inf_{\BGf\in\CA_{h}}\frac{\Gd^{2}\CE(\BGf;h,\Gl)}
{\int_{\GO_{h}}(\BGs_{h},\Grad\BGf^{T}\Grad\BGf)d\Bx}
\]
that measures reserve of stability in the
trivial branch. Negative values of $m^{*}(h,\Gl)$ signal instability. The
functional $m^{*}(h,\Gl)$ is based on the representation of the buckling load
as a generalized Korn constant in \cite{grtr07}.
\begin{remark}
  \label{rem:flip}
The theory of  buckling  of slender bodies in \cite{grtr07} is based on the
analysis of the function
\[
m(h,\Gl)=\inf_{\BGf\in V_{h}}\frac{\Gd^{2}\CE(\BGf;h,\Gl)}{\|\Grad\BGf\|^{2}}.
\]
We will see that for the axially compressed cylindrical shell
\begin{equation}
  \label{comparison}
  \int_{\GO_{h}}(\BGs_{h},\Grad\BGf_{h}^{T}\Grad\BGf_{h})d\Bx=o(\|\Grad\BGf_{h}\|^{2})
\end{equation}
for the minimizer $\BGf_{h}$ in $m^{*}(h,\Gl(h))$. Therefore,
\[
M(p)=\lim_{h\to 0}\frac{m(h,p\Gl(h))}{\Gl(h)}=0,
\]
and the link between $\Gd^{2}\CE(\BGf;h,\Gl)$ and
$\Gd^{2}\CE_{cl}(\BGf;h,\Gl)$ cannot be ascertained. In this paper we will
show that replacing $m(h,\Gl)$ with $m^{*}(h,\Gl)$ permits to establish the
required link. We also observe that the
distinction between $m(h,\Gl)$ and $m^{*}(h,\Gl)$ is essentially
3-dimensional, since, as we have shown in \cite{grtr07}, 
\[
\int_{\GO_{h}}(\BGs_{h},\Grad\BGf_{h}^{T}\Grad\BGf_{h})d\Bx\sim
\int_{\GO_{h}}\hf(\Trc\BGs_{h})|\Grad\BGf_{h}|^{2}d\Bx.
\]
\end{remark}
\begin{definition}
  \label{def:Bload}
We say that $\Gl(h)$ is the \textbf{buckling load} if
\[
\Gl(h)=\inf\{\Gl>0:m^{*}(h,\Gl)<0\}.
\]
We say that $\{\BGf_{h}\in\CA_{h}:h\in(0,h_{0})\}$ is a \textbf{buckling mode} if
\[
\lim_{h\to 0}\frac{\Gd^{2}\CE(\BGf_{h};h,\Gl(h))}
{\Gl(h)\int_{\GO_{h}}(\BGs_{h},\Grad\BGf_{h}^{T}\Grad\BGf_{h})d\Bx}=0,
\]
where $\Gl(h)$ is the buckling load.
\end{definition}
\begin{definition}
  \label{def:strcompr}
We will call the loading \textbf{strongly compressive}, if $\Gl(h)\to 0$, as $h\to 0$.
\end{definition}
According to (\ref{ccsv}), strongly compressive loads imply existence of variations
$\BGf\in\CA_{h}$ for which the measure of compressiveness 
\[
\mathfrak{C}_{h}(\BGf)=\int_{\GO_{h}}(\BGs_{h},\Grad\BGf^{T}\Grad\BGf)d\Bx
\]
is much larger than the measure of stability
\[
\mathfrak{S}_{h}(\BGf)=\int_{\GO_{h}}(\SFL_{0}e(\BGf),e(\BGf))d\Bx.
\]
In particular, the body $\GO_{h}$ has to be slender in the sense that the Korn
constant
\begin{equation}
  \label{KK}
  K(V_{h})=\inf_{\BGf\in V_{h}}\frac{\|e(\BGf)\|^{2}}{\|\Grad\BGf\|^{2}}
\end{equation}
is infinitesimal $K(V_{h})\to 0$, as $h\to 0$. 

The notion of the buckling mode in Definition~\ref{def:Bload} suggests an
extension of the notion of buckling equivalence to include buckling modes in
addition to buckling loads.
\begin{definition}
  \label{def:equiv}
  Assume $J(h,\BGf)$ is a variational functional defined on
  $\CB_h\subset\CA_{h}$. We say that the pair $(\CB_h, J(h,\BGf))$
  \textbf{characterizes buckling} if the following three conditions are
  satisfied
\begin{enumerate}
\item[(a)] Characterization of the buckling load:
\[
\lim_{h\to 0}\frac{\Hat{\Gl}(h)}{\Gl(h)}=1,
\]
where $\Gl(h)$ is the buckling load and
\[
\Hat{\Gl}(h)=\inf_{\BGf\in\CB_{h}}J(h,\BGf).
\]
\item[(b)] Characterization of the buckling mode:
If $\BGf_{h}\in\CB_{h}$ is a buckling mode, then
  \begin{equation}
    \label{bmode}
\lim_{h\to 0}\frac{J(h,\BGf_{h})}{\Hat{\Gl}(h)}=1.
  \end{equation}
\item[(c)] Faithful representation of the buckling mode:
If $\BGf_{h}\in\CB_{h}$ satisfies (\ref{bmode}) then it is a buckling mode.
\end{enumerate}
\end{definition}
We remark that by Definition~\ref{def:Bload} of the buckling mode the pair
$(\CA_{h},\mathfrak{J}(h,\BGf))$ characterizes buckling, where
\[
\mathfrak{J}(h,\BGf)=\Gl(h)+\frac{\Gd^{2}\CE(\BGf;h,\Gl(h))}{\mathfrak{C}_{h}(\BGf)}.
\]
We envision two ways in which the analysis of buckling can be simplified. One
is the simplification of the functional $\mathfrak{J}(h,\BGf)$. The other is
replacing the space of all admissible functions $\CA_{h}$ with a much smaller
space $\CB_{h}$. For example, we may want to use a specific ansatz, like the
Kirchhoff ansatz in buckling of rods and plates. According to
Definition~\ref{def:equiv} the simplified functional $J(h,\BGf)$, restricted to
the ansatz $\CB_{h}$ will capture the asymptotics of the buckling load and at
least one buckling mode. It is in principle conceivable, that there are other
buckling modes, not contained in $\CB_{h}$. However, we believe that such a
situation is non-generic. Even in this non-generic situation our approach
would allow to capture all geometrically distinct buckling modes, if one can
identify the ansatz $\CB_{h}$ for each of them.

It is an elementary observation that the only requirement we need to place on the
ansatz $\CB_{h}$ is that it must contain a buckling mode.
\begin{lemma}
\label{lem:pairB_hJ}
Suppose the pair $(\CB_{h},J(h,\BGf))$ characterizes buckling.
Let $\CC_{h}\subset\CB_{h}$ be such that it contains a buckling mode. Then the
pair $(\CC_{h},J(h,\BGf))$ also characterizes buckling.
\end{lemma}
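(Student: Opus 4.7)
The plan is to verify the three conditions (a), (b), (c) of Definition~\ref{def:equiv} for the pair $(\CC_{h},J(h,\BGf))$, leveraging that they already hold for $(\CB_{h},J(h,\BGf))$ and that $\CC_{h}\subset\CB_{h}$. The crucial first step is to pin down the asymptotics of
\[
\Hat{\Gl}_{\CC}(h)=\inf_{\BGf\in\CC_{h}}J(h,\BGf),
\]
relative to $\Hat{\Gl}(h)=\inf_{\BGf\in\CB_{h}}J(h,\BGf)$. Since $\CC_{h}\subset\CB_{h}$, the obvious inequality $\Hat{\Gl}_{\CC}(h)\ge\Hat{\Gl}(h)$ holds for each $h$.

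Next I would exploit the hypothesis that $\CC_{h}$ contains a buckling mode $\BGf_{h}^{\star}\in\CC_{h}\subset\CB_{h}$. Applying property (b) of Definition~\ref{def:equiv} for the pair $(\CB_{h},J(h,\BGf))$ to the sequence $\BGf_{h}^{\star}$ yields $J(h,\BGf_{h}^{\star})/\Hat{\Gl}(h)\to 1$. Combining the obvious upper bound $\Hat{\Gl}_{\CC}(h)\le J(h,\BGf_{h}^{\star})$ with the lower bound above gives
\[
1\le\frac{\Hat{\Gl}_{\CC}(h)}{\Hat{\Gl}(h)}\le\frac{J(h,\BGf_{h}^{\star})}{\Hat{\Gl}(h)}\to 1,
\]
hence $\Hat{\Gl}_{\CC}(h)/\Hat{\Gl}(h)\to 1$. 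Chained with property (a) for $(\CB_{h},J)$, this gives property (a) for $(\CC_{h},J)$.

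For property (b), I would take any buckling mode $\BGf_{h}\in\CC_{h}$; because $\CC_{h}\subset\CB_{h}$ the same sequence is a buckling mode in $\CB_{h}$, so $J(h,\BGf_{h})/\Hat{\Gl}(h)\to 1$ by (b) for $\CB_{h}$, and dividing by the ratio $\Hat{\Gl}_{\CC}(h)/\Hat{\Gl}(h)\to 1$ established above yields $J(h,\BGf_{h})/\Hat{\Gl}_{\CC}(h)\to 1$. Property (c) is obtained by reversing this computation: if $\BGf_{h}\in\CC_{h}$ satisfies $J(h,\BGf_{h})/\Hat{\Gl}_{\CC}(h)\to 1$, multiplying by $\Hat{\Gl}_{\CC}(h)/\Hat{\Gl}(h)\to 1$ gives $J(h,\BGf_{h})/\Hat{\Gl}(h)\to 1$, and then (c) for $(\CB_{h},J)$, together with $\BGf_{h}\in\CB_{h}$, forces $\BGf_{h}$ to be a buckling mode.

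There is really no hard step here; the only delicate point is the asymptotic identity $\Hat{\Gl}_{\CC}(h)/\Hat{\Gl}(h)\to 1$, which fails without the hypothesis that $\CC_{h}$ contain a buckling mode (otherwise $\Hat{\Gl}_{\CC}(h)$ could be bounded away from $\Hat{\Gl}(h)$). Once that identity is in hand, conditions (a)--(c) for $(\CC_{h},J)$ follow by straightforward transfer from the corresponding conditions for $(\CB_{h},J)$.
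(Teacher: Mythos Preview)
Your proof is correct and follows essentially the same approach as the paper's: establish $\Hat{\Gl}_{\CC}(h)/\Hat{\Gl}(h)\to 1$ by sandwiching between the trivial lower bound from $\CC_{h}\subset\CB_{h}$ and the upper bound $J(h,\BGf_{h}^{\star})/\Hat{\Gl}(h)\to 1$ provided by the buckling mode, then transfer (a)--(c) via this ratio. The paper phrases the upper bound with a $\limsup$ rather than an explicit sandwich, but the argument is the same.
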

\begin{proof}
Let
\[
\Hat{\Gl}(h)=\inf_{\BGf\in\CB_{h}}J(h,\BGf),\qquad\Tld{\Gl}(h)=\inf_{\BGf\in\CC_{h}}J(h,\BGf).
\]
Then, clearly, $\Tld{\Gl}(h)\ge\Hat{\Gl}(h)$. By assumption there exists a
buckling mode $\BGf_{h}\in\CC_{h}\subset\CB_{h}$. Therefore,
\[
\lims_{h\to 0}\frac{\Tld{\Gl}(h)}{\Hat{\Gl}(h)}\le
\lim_{h\to 0}\frac{J(h,\BGf_{h})}{\Hat{\Gl}(h)}=1,
\]
since the pair $(\CB_{h},J(h,\BGf))$ characterizes buckling. Hence
\begin{equation}
  \label{adeq}
 \lim_{h\to 0}\frac{\Tld{\Gl}(h)}{\Hat{\Gl}(h)}=1,
\end{equation}
and part (a) of Definition~\ref{def:equiv} is established.

If $\BGf_{h}\in\CC_{h}\subset\CB_{h}$ is a buckling mode then
\[
\lim_{h\to 0}\frac{J(h,\BGf_{h})}{\Hat{\Gl}(h)}=1,
\]
since the pair $(\CB_{h},J(h,\BGf))$ characterizes buckling. Part (b) now
follows from (\ref{adeq}).

Finally, if $\BGf_{h}\in\CC_{h}$ satisfies
\[
\lim_{h\to 0}\frac{J(h,\BGf_{h})}{\Tld{\Gl}(h)}=1,
\]
then, $\BGf_{h}\in\CB_{h}$ and by (\ref{adeq}) we also have
\[
\lim_{h\to 0}\frac{J(h,\BGf_{h})}{\Hat{\Gl}(h)}=1.
\]
Therefore, $\BGf_{h}$ is a buckling mode, since, by assumption the pair
$(\CB_{h},J(h,\BGf))$ characterizes buckling. The Lemma is proved now.
\end{proof}

If we replace the second variation $\Gd^{2}\CE(\BGf;h,\Gl)$ with the
constitutively linearized second variation (\ref{ccsv}) in the functional
$\mathfrak{J}(h,\BGf)$, we will obtain the functional
\begin{equation}
  \label{Kgen}
\mathfrak{K}(h,\BGf)=\frac{\int_{\GO_{h}}(\SFL_{0}e(\BGf),e(\BGf))d\Bx}
{\int_{\GO_{h}}(\BGs_{h},\Grad\BGf^{T}\Grad\BGf)d\Bx}=
\frac{\mathfrak{S}_{h}(\BGf)}{\mathfrak{C}_{h}(\BGf)},
\end{equation}
which first appeared in \cite{grtr07} where it was shown that the buckling
load can be regarded as a generalized Korn constant
\begin{equation}
  \label{clin}
\Hat{\Gl}(h)=\inf_{\BGf\in\CA_{h}}\mathfrak{K}(h,\BGf).
\end{equation}
Unfortunately the sufficient conditions for buckling equivalence established
in \cite{grtr07} fail to guarantee the validity of constitutive
linearization in the case of the axially compressed thin-walled cylinder for
reasons explained in Remark~\ref{rem:flip}. Our next theorem proves buckling
equivalence and also shows that the constitutively linearized functional
$\mathfrak{K}(h,\BGf)$ captures the buckling mode as well.
\begin{theorem}[Asymptotics of the critical load]
  \label{th:crit}
Suppose that the Korn constant $K(V_{h})$ defined by (\ref{KK}) satisfies
\begin{equation}
  \label{sufcond}
  \lim_{h\to 0}K(V_{h})=0,\qquad\lim_{h\to 0}\frac{\Hat{\Gl}(h)^{2}}{K(V_{h})}=0.
\end{equation}
Then the pair $(\CA_h, \mathfrak{K}(h,\BGf))$ characterizes buckling in the sense of
Definition~\ref{def:equiv}.
\end{theorem}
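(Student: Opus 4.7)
The plan is to establish, via constitutive linearization, the pointwise Taylor expansion
\begin{equation*}
(W_{\BF\BF}(\Grad\By(\Bx;h,\Gl))\Grad\BGf,\Grad\BGf)=(\SFL_{0}e(\BGf),e(\BGf))-\Gl(\BGs_{h},\Grad\BGf^{T}\Grad\BGf)+O(\Gl^{2})|\Grad\BGf|^{2},
\end{equation*}
obtained by expanding $W_{\BF\BF}(\Grad\By)$ around $\BI$ using (\ref{fundass}). Frame indifference (P2) combined with (P4) forces $W_{\BF\BF}(\BI)\BH$ to depend only on $\BH_{\rm sym}$, collapsing the zeroth-order term to $(\SFL_{0}e(\BGf),e(\BGf))$; the first-order coefficient is identified with $-(\BGs_{h},\Grad\BGf^{T}\Grad\BGf)$ by a standard identity that combines frame indifference with (P1), using the definition (\ref{minusstress}). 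Integrating over $\GO_{h}$ yields
\begin{equation*}
\Gd^{2}\CE(\BGf;h,\Gl)=\mathfrak{S}_{h}(\BGf)-\Gl\,\mathfrak{C}_{h}(\BGf)+R(h,\Gl,\BGf),\qquad |R|\le C\Gl^{2}\|\Grad\BGf\|_{L^{2}(\GO_{h})}^{2},
\end{equation*}
uniformly in $h$ and $\BGf\in V_{h}$. For $\BGf\in\CA_{h}$, Korn's inequality (\ref{KK}) together with the coercivity (\ref{Lcoerc}) gives $\|\Grad\BGf\|^{2}\le\mathfrak{S}_{h}(\BGf)/(\Ga_{\SFL_{0}}K(V_{h}))$, so dividing by $\mathfrak{C}_{h}(\BGf)$ produces
\begin{equation*}
\frac{\Gd^{2}\CE(\BGf;h,\Gl)}{\mathfrak{C}_{h}(\BGf)}=\mathfrak{K}(h,\BGf)-\Gl+\eta(h,\Gl,\BGf),\qquad |\eta|\le\frac{C\Gl^{2}}{K(V_{h})}\,\mathfrak{K}(h,\BGf).
\end{equation*}
The hypothesis (\ref{sufcond}) is calibrated precisely so that $\eta$ becomes negligible relative to $\mathfrak{K}(h,\BGf)$ once $\Gl$ is comparable to $\Hat{\Gl}(h)$.

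For part (a) I would run a sandwich on $\Gl(h)/\Hat{\Gl}(h)$. Fix $\Gd>0$. For a near-minimizer $\BGf\in\CA_{h}$ with $\mathfrak{K}(h,\BGf)\le\Hat{\Gl}(h)(1+\Gd/2)$, evaluating the displayed identity at $\Gl=\Hat{\Gl}(h)(1+\Gd)$ gives a strictly negative right-hand side for small $h$, so $m^{*}(h,\Gl)<0$ and $\Gl(h)\le\Hat{\Gl}(h)(1+\Gd)$. Conversely, for any $\BGf\in\CA_{h}$ the universal lower bound $\mathfrak{K}(h,\BGf)\ge\Hat{\Gl}(h)$ combined with the bound on $|\eta|$ yields
\begin{equation*}
\frac{\Gd^{2}\CE(\BGf;h,\Gl)}{\mathfrak{C}_{h}(\BGf)}\ge\mathfrak{K}(h,\BGf)\left(1-\frac{C\Gl^{2}}{K(V_{h})}\right)-\Gl\ge\Hat{\Gl}(h)(1-o(1))-\Gl,
\end{equation*}
so that for $\Gl\le\Hat{\Gl}(h)(1-\Gd)$ this is strictly positive for small $h$, giving $m^{*}(h,\Gl)\ge 0$ and $\Gl(h)\ge\Hat{\Gl}(h)(1-\Gd)$. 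Sending $\Gd\to 0$ establishes (a).

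For (b) and (c) I would divide the identity by $\Gl(h)$ and use $\Gl(h)/\Hat{\Gl}(h)\to 1$ from (a). The buckling-mode condition of Definition~\ref{def:Bload} reads $\mathfrak{K}(h,\BGf_{h})/\Gl(h)=1+o(1)+\eta/\Gl(h)$, with $|\eta|/\Gl(h)\le C\Gl(h)\mathfrak{K}(h,\BGf_{h})/K(V_{h})$. In one direction, starting from $\Gd^{2}\CE(\BGf_{h};h,\Gl(h))/(\Gl(h)\mathfrak{C}_{h}(\BGf_{h}))\to 0$ and using $\mathfrak{K}(h,\BGf_{h})\ge\Hat{\Gl}(h)$, a short rearrangement yields $\mathfrak{K}(h,\BGf_{h})/\Hat{\Gl}(h)\to 1$, proving (b); in the other direction, assuming $\mathfrak{K}(h,\BGf_{h})/\Hat{\Gl}(h)\to 1$ immediately makes the right-hand side of the identity $o(\Gl(h))$, since $\Gl(h)^{2}/K(V_{h})\to 0$, proving (c). The main obstacle is obtaining the constitutive-linearization identity with remainder uniformly controlled by $\|\Grad\BGf\|^{2}$; once this is in hand, the rest is an $\Ge$--$\Gd$ sandwich. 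The improvement over \cite{grtr07} lies in normalizing by $\mathfrak{C}_{h}$ instead of $\|\Grad\BGf\|^{2}$, which lets Korn absorb the remainder under the weaker hypothesis $\Hat{\Gl}(h)^{2}/K(V_{h})\to 0$, and this is what makes the theory applicable to axially compressed cylindrical shells in view of Remark~\ref{rem:flip}.
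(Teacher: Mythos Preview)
Your overall strategy matches the paper's: linearize the second variation constitutively, divide by $\mathfrak{C}_{h}(\BGf)$, and run an $\Ge$--$\Gd$ sandwich using Korn to absorb the remainder. However, your remainder estimate is wrong, and this is exactly the step you flagged as the ``main obstacle.''

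You claim
\[
(W_{\BF\BF}(\Grad\By)\Grad\BGf,\Grad\BGf)=(\SFL_{0}e(\BGf),e(\BGf))-\Gl(\BGs_{h},\Grad\BGf^{T}\Grad\BGf)+O(\Gl^{2})|\Grad\BGf|^{2},
\]
i.e.\ that the \emph{entire} first-order term in $\Gl$ is $-(\BGs_{h},\Grad\BGf^{T}\Grad\BGf)$. This is false. Writing $W(\BF)=\Hat{W}(\BF^{T}\BF)$ and differentiating twice gives
\[
(W_{\BF\BF}(\BF)\BGx,\BGx)=4(\Hat{W}_{\BC\BC}(\BC)\BF^{T}\BGx,\BF^{T}\BGx)+2(\Hat{W}_{\BC}(\BC),\BGx^{T}\BGx).
\]
At first order in $\BF-\BI$ the second summand indeed produces $-\Gl(\BGs_{h},\BGx^{T}\BGx)$, but the first summand also produces cross terms of the type $8(\Hat{W}_{\BC\BC}(\BI)\BGx_{\rm sym},((\BF-\BI)^{T}\BGx)_{\rm sym})$, which are first order in $\Gl$ and only bounded by $C\Gl|\BGx_{\rm sym}||\BGx|$. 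They do \emph{not} collapse into $(\BGs_{h},\BGx^{T}\BGx)$; take for instance $W(\BF)=\hf|\BE|^{2}$ and $\BF=\BI+\Gl\BI$ to see a nonzero leftover $2\Gl|\BGx_{\rm sym}|^{2}$. Consequently the correct integrated bound (this is the paper's Lemma~\ref{leb:be}) is
\[
|\Gd^{2}\CE(\BGf;h,\Gl)-\Gd^{2}\CE_{cl}(\BGf;h,\Gl)|\le C\big(\Gl\|e(\BGf)\|\,\|\Grad\BGf\|+\Gl^{2}\|\Grad\BGf\|^{2}\big),
\]
with an extra term $\Gl\|e(\BGf)\|\|\Grad\BGf\|$ that your $O(\Gl^{2})\|\Grad\BGf\|^{2}$ does not cover.

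Fortunately your scheme survives once you use the correct bound: after dividing by $\mathfrak{C}_{h}(\BGf)$ and invoking (\ref{Lcoerc}) and Korn, the remainder becomes
\[
|\eta|\le C\left(\frac{\Gl}{\sqrt{K(V_{h})}}+\frac{\Gl^{2}}{K(V_{h})}\right)\mathfrak{K}(h,\BGf),
\]
and at $\Gl\sim\Hat{\Gl}(h)$ both coefficients vanish by (\ref{sufcond}), since $\Hat{\Gl}(h)^{2}/K(V_{h})\to 0$ implies $\Hat{\Gl}(h)/\sqrt{K(V_{h})}\to 0$. With this correction your parts (a)--(c) go through exactly as in the paper.
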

\begin{proof}
  The theorem is proved by means of the basic estimate, which is a simple
  modification of the estimates in \cite{grtr07} used in the derivation of the
  formula for $\Gd^{2}\CE_{cl}(\BGf;h,\Gl)$:
\begin{lemma}
  \label{leb:be}
Suppose $\By(\Bx;h,\Gl)$ satisfies (\ref{fundass}) and $W(\BF)$ has the
properties (P1)--(P2). Then
\begin{equation}
  \label{basic}
\left|\Gd^{2}\CE(\BGf;h,\Gl)-\Gd^{2}\CE_{cl}(\BGf;h,\Gl)\right|\le
C(\Gl\|e(\BGf)\|\|\Grad\BGf\|+\Gl^{2}\|\Grad\BGf\|^{2}).
\end{equation}
\end{lemma}
\begin{proof}
According to the frame indifference property (P2),
$W(\BF)=\Hat{W}(\BF^{T}\BF)$. Differentiating this formula twice we obtain
\[
(W_{\BF\BF}(\BF)\BGx,\BGx)=4(\Hat{W}_{\BC\BC}(\BC)(\BF^{T}\BGx),\BF^{T}\BGx)+
2(\Hat{W}_{\BC}(\BC)\BGx^{T}\BGx),\qquad\BC=\BF^{T}\BF.
\]
We make the following estimate
\[
|(\Hat{W}_{\BC\BC}(\BC)(\BF^{T}\BGx),\BF^{T}\BGx)-(\Hat{W}_{\BC\BC}(\BI)\BGx,\BGx)|\le
|(\Hat{W}_{\BC\BC}(\BC)(\BF^{T}-\BI)\BGx,(\BF^{T}-\BI)\BGx)|+
\]
\[
|((\Hat{W}_{\BC\BC}(\BC)-\Hat{W}_{\BC\BC}(\BI))\BGx,\BGx)|+
2|(\Hat{W}_{\BC\BC}(\BC)\BGx,(\BF^{T}-\BI)\BGx)|
\]
When $\BF$ is uniformly bounded we obtain
\[
|(\Hat{W}_{\BC\BC}(\BC)(\BF^{T}\BGx),\BF^{T}\BGx)-(\Hat{W}_{\BC\BC}(\BI)\BGx,\BGx)|\le
C\left(|\BF-\BI|^{2}|\BGx|^{2}+|\BC-\BI||\BGx_{\rm sym}|^{2}+|\BF-\BI||\BGx_{\rm sym}||\BGx|
\right).
\]
Similarly,
\[
|(\Hat{W}_{\BC}(\BC)-\Hat{W}_{\BC\BC}(\BI)(\BC-\BI),\BGx^{T}\BGx)|\le
C|\BC-\BI|^{2}|\BGx|^{2}
\]
When $\BF=\Grad\By(\Bx;h,\Gl)$ and $\BGx=\Grad\BGf$ we obtain, taking into
account (\ref{fundass}), that
\[
|\BF-\BI|\le C\Gl,\qquad|\BC-\BI|\le C\Gl.
\]
The estimate (\ref{basic}) now follows from the formulas
\[
4\Hat{W}_{\BC\BC}(\BI)=W_{\BF\BF}(\BI)=\SFL_{0},\qquad|\BC-\BI-2\Gl e(\Bu^{h})|\le C\Gl^{2}.
\]
\end{proof}
Let us show that for any $\epsilon>0$ there exists $h(\Ge)>0$, so that for all
$h<h(\Ge)$ there exists $\BGf_{h}\in\CA_{h}$, such that
$\Gd^{2}\CE(\BGf_h;h,\Hat{\Gl}(h)(1+2\Ge))<0$. In that case we will be able to
conclude that $\Gl(h)\le\Hat{\Gl}(h)(1+2\Ge)$ for any $h<h(\Ge)$.  For any
fixed $\Ge$ and any $h>0$ (for which $\CA_{h}$ is non-empty) there exists
$\BGf_h\in \CA_h$ such that
\begin{equation}
\label{1+epsilon}
\mathfrak{S}_{h}(\BGf_{h})\leq
\Hat{\Gl}(h)(1+\epsilon)\mathfrak{C}_{h}(\BGf_{h}),
\end{equation}
thus, putting $\lambda=\lambda_\epsilon(h)=\Hat{\Gl}(h)(1+2\epsilon)$ and
$\BGf=\BGf_h$ in (\ref{basic}) and utilizing (\ref{1+epsilon}) we get,
\[
\Gd^{2}\CE(\BGf_h;h,\Gl_\epsilon(h))\le-\frac{\epsilon}{(1+\epsilon)}
\mathfrak{S}_{h}(\BGf_{h})+C(\Hat{\Gl}(h)\|e(\BGf_h)\|\|\Grad\BGf_h\|
+\Hat{\Gl}(h)^{2}\|\Grad\BGf_h\|^{2}).
\]
By (\ref{Lcoerc}) we obtain
\begin{equation}
  \label{coerc}
\mathfrak{S}_{h}(\BGf_{h})\ge\Ga_{\SFL_{0}}\|e(\BGf_h)\|^{2}.
\end{equation}
Now, by the Korn inequality we obtain
\[
\Gd^{2}\CE(\BGf_h;h,\Gl_\epsilon(h))\leq\left(-\frac{\epsilon\alpha_{L_0}}{(1+\epsilon)}+
C\left(\frac{\Hat{\Gl}(h)}{\sqrt{K(V_{h})}}+\frac{\Hat{\Gl}(h)^{2}}{K(V_{h})}\right)
\right)\|e(\BGf_h)\|^2,
\]
We now see that due to (\ref{sufcond}) we have
$\Gd^{2}\CE(\BGf_h;h,\Gl_\epsilon(h))<0$ for sufficiently small $h$.

Now, let us show that for any $\Ge>0$ there exists $h(\Ge)>0$, so that for any
$h<h(\Ge)$, any $0<\lambda\leq\Hat{\Gl}(h)(1-\epsilon)$ and any $\BGf\in\CA_{h}$ we have
$\Gd^{2}\CE(\BGf;h,\Gl)>0$. Indeed, using (\ref{basic}) and the
generalized Korn inequality
\[
\mathfrak{S}_{h}(\BGf)\ge\Hat{\Gl}(h)\mathfrak{C}_{h}(\BGf)
\]
we estimate for $0<\lambda\leq\Hat{\Gl}(h)(1-\epsilon)$
\[
\Gd^{2}\CE(\BGf;h,\Gl)\geq\epsilon\mathfrak{S}_{h}(\BGf)-|C|(\Hat{\Gl}(h)\|e(\BGf)\|\|\Grad\BGf\|+\Hat{\Gl}(h)^{2}\|\Grad\BGf\|^{2})
\]
Using (\ref{coerc}) and the Korn inequality we conclude that
\[
\Gd^{2}\CE(\BGf;h,\Gl)\geq\left(\epsilon\alpha_{L_0}-C\left(\frac{\Hat{\Gl}(h)}{\sqrt{K(V_{h})}}
+\frac{\Hat{\Gl}(h)^{2}}{K(V_{h})}\right)\right)\|e(\BGf)\|^2.
\]
We now see that $\Gd^{2}\CE(\BGf;h,\Gl)>0$ for sufficiently small $h$, which
means that $\lambda(h)\geq \Hat{\Gl}(h)(1-\epsilon)$. Therefore, part (a) of
the theorem is proved.

We will now establish part (b). Assume now that $\BGf_h$ is a buckling mode. Then
$\Ga_{h}\to 0$, as $h\to 0$, where
\[
\alpha_h=\frac{\Gd^{2}\CE(\BGf_{h};h,\Gl(h))}{\Gl(h)
\mathfrak{C}_{h}(\BGf_{h})}.
\]
Observe that by virtue of $(a)$, the condition (\ref{sufcond}) holds for $\Hat{\Gl}(h)$ replaced by $\Gl(h),$ therefore by (\ref{sufcond}) and by the Korn inequality,
\begin{equation}
\label{0limit1}
\lims_{h\to 0}\frac{\lambda(h)\|e(\BGf_h)\|\|\Grad\BGf_h\|}{\mathfrak{S}_{h}(\BGf_{h})}\leq
\lims_{h\to 0}\frac{\lambda(h)\|e(\BGf_h)\|\|\Grad\BGf_h\|}{\alpha_{L_{0}}\|e(\BGf_h)\|^2}\leq
\lims_{h\to 0}\frac{\lambda(h)}{\alpha_{L_{0}}\sqrt{K(V_h)}}=0,
\end{equation}
and similarly
\begin{equation}
\label{0limit2}
\lim_{h\to 0}\frac{\lambda(h)^2\|\Grad\BGf_h\|^2}{\mathfrak{S}_{h}(\BGf_{h})}=0.
\end{equation}
Let us substitute $\BGf=\BGf_{h}$ and $\Gl=\Gl(h)$ into (\ref{basic}) and
divide by $\mathfrak{S}_{h}(\BGf_{h})$. Using
(\ref{0limit1}) and (\ref{0limit2}) we obtain
\[
\lim_{h\to 0}\left|1-\frac{\Gl(h)}{\mathfrak{K}(h,\BGf_{h})}
-\frac{\Gd^{2}\CE(\BGf_{h};h,\Gl(h))}{\mathfrak{S}_{h}(\BGf_{h})}\right|=0.
\]
We now use $\Ga_{h}$ to eliminate $\Gd^{2}\CE$:
\[
\frac{\Gd^{2}\CE(\BGf_{h};h,\Gl(h))}{\mathfrak{S}_{h}(\BGf_{h})}=
\frac{\Ga_{h}\Gl(h)}{\mathfrak{K}(h,\BGf_{h})}.
\]
Therefore,
\[
\lim_{h\to 0}\frac{(1+\Ga_{h})\Gl(h)}{\mathfrak{K}(h,\BGf_{h})}=1.
\]
Recalling that $\Ga_{h}\to 0$, as $h\to 0$ we conclude that
\[
\lim_{h\to 0}\frac{\Gl(h)}{\mathfrak{K}(h,\BGf_{h})}=1,
\]
and part (b) follows via part (a).

Let us prove part (c). Let $\BGf_{h}$ satisfy (\ref{bmode}). Then, by part
(a), $\Gb_{h}\to 0$, as $h\to 0$, where
\[
\beta_h=\frac{\mathfrak{K}(h,\BGf_h)}{\Gl(h)}-1.
\]
Therefore,
\begin{equation}
  \label{betah}
\Gl(h)\mathfrak{C}_{h}(\BGf_{h})=
\frac{1}{1+\Gb_{h}}\mathfrak{S}_{h}(\BGf_{h}).
\end{equation}
Let us substitute $\BGf=\BGf_{h}$ and $\Gl=\Gl(h)$ into (\ref{basic}) and
divide by $\Gl(h)\mathfrak{C}_{h}(\BGf_{h})$ to
obtain (using \ref{betah})
\begin{equation}
\label{0ineq}
\left|\frac{\Gd^{2}\CE(\BGf_h;h,\Gl(h))}{\Gl(h)\mathfrak{C}_{h}(\BGf_{h})}-\beta_h\right|\leq
\frac{C(1+\beta_h)(\lambda(h)\|e(\BGf_h)\|\|\Grad\BGf_h\|+
\lambda(h)^{2}\|\Grad\BGf_h\|^{2})}{\mathfrak{S}_{h}(\BGf_{h})}.
\end{equation}
Note that (\ref{0limit1}) and (\ref{0limit2}) continue to hold for any
$\BGf_{h}\in \CA_h,$. Thus (\ref{0ineq}) implies
\[
\lim_{h\to 0}\frac{\Gd^{2}\CE(\BGf_h;h,\Gl(h))}{\Gl(h)
\mathfrak{C}_{h}(\BGf_{h})}=0,
\]
therefore $\BGf_h$ is a buckling mode.
\end{proof}
The passage from the second variation of the trivial branch to the
constitutively linearized functional $\mathfrak{K}(h,\BGf)$ is an important
simplification. However, in specific problems we might want to simplify the
functional $\mathfrak{K}(h,\BGf)$ even further. For that reason we want to
establish general criteria for the validity of replacing one functional that
characterizes buckling by another. Behind this flexibility is the equivalence
relation on the space of functionals.
\begin{definition}
  \label{def:Bequivalence}
  Two pairs $(\CB_h, J_{1}(h,\BGf))$ and $(\CB_h, J_{2}(h,\BGf))$ are called
  \textbf{buckling equivalent} if the pair $(\CB_h,J_{1}(h,\BGf))$ characterizes buckling if
  and only if $(\CB_h,J_2(h,\BGf))$ does.
\end{definition}
The notion of B-equivalence was introduced in \cite{grtr07} on the set of
functions $m(h,\Gl)$ in order to capture buckling load by means of
constitutive linearization. Definition~\ref{def:Bequivalence} extends the idea
of buckling equivalence to \emph{functionals} in order to capture buckling
modes in addition to buckling loads.

Theorem~\ref{th:Bequivalence} below gives a convenient criterion of buckling equivalence.
\begin{theorem}[Buckling equivalence]
\label{th:Bequivalence}
If either
\begin{equation}
\label{J1J2}
\lim_{h\to 0}\Gl(h)\sup_{\BGf\in\CB_{h}}\left|\frac{1}{J_1(h,\BGf)}-\frac{1}{J_2(h,\BGf)}\right|=0,
\end{equation}
or
\begin{equation}
\label{J2J1}
\lim_{h\to 0}\nth{\Gl(h)}\sup_{\BGf\in\CB_{h}}|J_1(h,\BGf)-J_2(h,\BGf)|=0,
\end{equation}
then the pairs $(\CB_h, J_1(h,\BGf))$ and $(\CB_h, J_2(h,\BGf))$ are
buckling equivalent in the sense of Definition~\ref{def:Bequivalence}.
\end{theorem}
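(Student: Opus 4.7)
The plan is to verify that the pair $(\CB_h, J_2(h,\BGf))$ satisfies each of the three conditions (a)--(c) of Definition~\ref{def:equiv} under the assumption that $(\CB_h, J_1(h,\BGf))$ does; the reverse implication will follow by symmetry of both hypotheses in $J_1$ and $J_2$. Throughout I will write $\Hat{\Gl}_i(h) = \inf_{\BGf\in\CB_h} J_i(h,\BGf)$ for $i=1,2$.

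For part (a), under hypothesis (\ref{J2J1}) I would take the infimum over $\BGf\in\CB_h$ on both sides of the uniform bound $|J_1(h,\BGf)-J_2(h,\BGf)|=o(\Gl(h))$, yielding $|\Hat{\Gl}_1(h)-\Hat{\Gl}_2(h)|=o(\Gl(h))$; dividing by $\Gl(h)$ and invoking $\Hat{\Gl}_1(h)/\Gl(h)\to 1$ then forces $\Hat{\Gl}_2(h)/\Gl(h)\to 1$. Under hypothesis (\ref{J1J2}), I would run the same argument with $1/J_i$ in place of $J_i$, using the identity $1/\Hat{\Gl}_i(h)=\sup_{\BGf\in\CB_h}(1/J_i(h,\BGf))$ valid when the $J_i$ are positive (implicit in writing $1/J_i$), to obtain
\[
\Gl(h)\left|\frac{1}{\Hat{\Gl}_1(h)}-\frac{1}{\Hat{\Gl}_2(h)}\right|=o(1).
\]
Since $\Gl(h)/\Hat{\Gl}_1(h)\to 1$, this again yields $\Gl(h)/\Hat{\Gl}_2(h)\to 1$.

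For parts (b) and (c), I would specialize either uniform hypothesis to a single sequence $\BGf_h\in\CB_h$, obtaining $|J_1(h,\BGf_h)-J_2(h,\BGf_h)|=o(\Gl(h))$ or $\Gl(h)\bigl|J_1(h,\BGf_h)^{-1}-J_2(h,\BGf_h)^{-1}\bigr|=o(1)$, respectively. Combining this with the asymptotic equivalence $\Hat{\Gl}_1(h)\sim\Hat{\Gl}_2(h)\sim\Gl(h)$ established in part (a) will give
\[
\lim_{h\to 0}\frac{J_1(h,\BGf_h)}{\Hat{\Gl}_1(h)}=1\quad\Longleftrightarrow\quad\lim_{h\to 0}\frac{J_2(h,\BGf_h)}{\Hat{\Gl}_2(h)}=1.
\]
Part (b) for $J_2$ then follows by applying this equivalence to any buckling mode $\BGf_h$ and using property (b) for $(\CB_h,J_1)$; part (c) for $J_2$ follows by applying the same equivalence in the opposite direction to a sequence $\BGf_h\in\CB_h$ satisfying (\ref{bmode}) for $J_2$, and then invoking the faithful representation property (c) for $(\CB_h,J_1)$ to conclude that $\BGf_h$ is a genuine buckling mode.

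The most delicate point will be case (\ref{J1J2}): one must check that positivity of $J_i$ (implicit in the reciprocal hypothesis) legitimizes the interchange of infimum and reciprocal used in part (a), and that neither $\Hat{\Gl}_i(h)$ degenerates in a way that makes the division by $\Gl(h)$ unstable. Once this is secured, the rest is essentially formal bookkeeping of asymptotic equivalences built on the already-established part (a), and I do not anticipate any substantive analytical obstacle.
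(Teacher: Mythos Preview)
Your proposal is correct and follows essentially the same route as the paper: bound the difference of infima (resp.\ suprema of reciprocals) by the supremum of differences to get part (a), then specialize the uniform hypothesis to a single sequence $\BGf_h$ and combine with part (a) to obtain the biconditional that handles (b) and (c) simultaneously. The positivity concern you flag is indeed implicit in writing $1/J_i$ and is treated by the paper in exactly the same formal manner.
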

\begin{proof}
Let us introduce the notation
\[
\Hat{\Gl}_i(h)=\inf_{\BGf\in\CB_h}J_i(h,\BGf),\quad i=1,2.
\]
\[
\Gd_{1}(h)=\Gl(h)\sup_{\BGf\in\CB_{h}}\left|\frac{1}{J_1(h,\BGf)}-\frac{1}{J_2(h,\BGf)}\right|.
\]
\[
\Gd_{2}(h)=\nth{\Gl(h)}\sup_{\BGf\in\CB_{h}}|J_1(h,\BGf)-J_2(h,\BGf)|.
\]
Then
\[
\left|\frac{\Gl(h)}{\Hat{\Gl}_1(h)}-\frac{\Gl(h)}{\Hat{\Gl}_2(h)}\right|=
\Gl(h)\left|\sup_{\BGf\in\CB_{h}}\frac{1}{J_1(h,\BGf)}
-\sup_{\BGf\in\CB_{h}}\frac{1}{J_2(h,\BGf)}\right|\le\Gd_{1}(h)
\]
and
\[
\frac{|\Hat{\Gl}_1(h)-\Hat{\Gl}_{2}(h)|}{\Gl(h)}=
\nth{\Gl(h)}\left|\inf_{\BGf\in\CB_{h}}J_1(h,\BGf)
-\inf_{\BGf\in\CB_{h}}J_2(h,\BGf)\right|\le\Gd_{2}(h)
\]
Assume that $(\CB_h, J_1(h,\BGf))$ characterizes buckling. Then we have just proved
that if either $\Gd_{1}(h)\to 0$ or $\Gd_{2}(h)\to 0$, as $h\to 0$, then
$\Hat{\Gl}_2(h)/\Gl(h)\to 1$, as $h\to 0$, and condition (a) in the
Definition~\ref{def:equiv} is proved for $J_{2}(h,\BGf)$.

Observe that by parts (b) and (c) of Definition~\ref{def:equiv}
$\BGf_{h}\in\CB_{h}$ is the buckling mode \IFF
\[
\lim_{h\to 0}\frac{J_{1}(h,\BGf_{h})}{\Hat{\Gl}_{1}(h)}=1.
\]
This is equivalent to
\[
\lim_{h\to 0}\frac{\Gl(h)}{J_{1}(h,\BGf_{h})}=1.
\]
Therefore,
\[
\lim_{h\to 0}\frac{J_{2}(h,\BGf_{h})}{\Gl(h)}=1,
\]
since either
\[
\left|\frac{\Gl(h)}{J_{1}(h,\BGf_{h})}-\frac{\Gl(h)}{J_{2}(h,\BGf_{h})}\right|\le\Gd_{1}(h)
\]
or
\[
\frac{|J_{1}(h,\BGf_{h})-J_{2}(h,\BGf_{h})|}{\Gl(h)}\le\Gd_{2}(h)
\]
Thus, in view of part (a), $\BGf_{h}$ is a buckling mode \IFF
\[
\lim_{h\to 0}\frac{J_{2}(h,\BGf_{h})}{\Hat{\Gl}_{2}(h)}=1.
\]
\end{proof}

\section{Korn's inequality for the perfect cylindrical shell}
\setcounter{equation}{0}
\label{sec:KI}
Consider the perfect cylindrical shell given in cylindrical coordinates $(r,\Gth,z)$ as
\[
\CC_{h}=I_{h}\times\bb{T}\times[0,L],\qquad I_{h}=[1-h/2,1+h/2],
\]
where $\bb{T}$ is a 1-dimensional torus (circle) describing $2\pi$-periodicity in
$\Gth$. In this paper we consider the axial compression of the shell where
the displacement $\BGf\colon\CC_{h}\to \mathbb R^3$ satisfies one of the following two boundary conditions:
\begin{equation}
\label{BCfixed}
\phi_{z}(r,\Gth,0)=\phi_{r}(r,\Gth,0)=\phi_{\Gth}(r,\Gth,0)=
\phi_{r}(r,\Gth,L)=\phi_{\Gth}(r,\Gth,L)=0,
\end{equation}
or
\begin{equation}
\label{BCaverage}
\phi_{r}(r,\Gth,0)=\phi_{\Gth}(r,\Gth,0)=
\phi_{r}(r,\Gth,L)=\phi_{\Gth}(r,\Gth,L)=0,\quad \int_{I_{h}\times\bb{T}}\phi_{z}(r,\Gth,0)d\Gth dr=0.
\end{equation}
In the first case the top of the shell is allowed only the vertical
displacement and the bottom is kept fixed, while in the second case both the
top and the bottom of the shell are allowed the vertical displacements. In
order to eliminate vertical rigid body translations, the average vertical
displacement of the bottom edge is set to zero. In this paper we will work
almost exclusively with the \bc s (\ref{BCaverage}). In Section~\ref{sec:altbc} we will
show that our results can be extended to the \bc s
(\ref{BCfixed}). Accordingly, let
\begin{equation}
  \label{Vh}
V_{h}=\{\BGf\in W^{1,2}(\CC_{h};\mathbb R^3):\text{ (\ref{BCaverage}) holds}\}.
\end{equation}
\begin{equation}
  \label{Wh}
W_{h}=\{\BGf\in W^{1,2}(\CC_{h};\mathbb R^3):\text{ (\ref{BCfixed}) holds}\}\subset V_{h}.
\end{equation}
The theorem below establishes the asymptotics of the Korn constant $K(V_{h})$.
\begin{theorem}
\label{th:KI}
There exists a constant
$C(L)$ depending only on $L$ such that
\begin{equation}
  \label{KI}
  \|\Grad\Bu\|^{2}\le \frac{C(L)}{h\sqrt{h}}\|e(\Bu)\|^{2}
\end{equation}
For any $\Bu\in V_{h}$. Moreover, $K(V_{h})=C(L)h^{3/2}$.
  \end{theorem}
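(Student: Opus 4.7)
The plan is to Fourier-decompose $\BGf\in V_h$ in the periodic angle $\theta$ and in the axial variable $z$, compatibly with the boundary conditions~(\ref{BCaverage}), thereby reducing the Korn quotient $\|e(\BGf)\|^2/\|\Grad\BGf\|^2$ to a family of essentially one-dimensional problems on the thin interval $I_h$ indexed by wave numbers $(n,k)\in\bb{Z}\times\bb{Z}_{\ge 0}$. The boundary conditions dictate that $\phi_r$ and $\phi_\theta$ are sine series in $z$ with modes $\sin(k\pi z/L)$ for $k\ge 1$, while $\phi_z$ is a cosine series with modes $\cos(k\pi z/L)$ for $k\ge 0$, subject to the mean-zero constraint on the $(n,k)=(0,0)$ component; the $\theta$-dependence is $e^{in\theta}$. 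Both $\|e(\BGf)\|^2$ and $\|\Grad\BGf\|^2$, written out in cylindrical coordinates, diagonalize in this basis, so it suffices to bound the quotient uniformly mode by mode.

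For each fixed $(n,k)$ the only remaining freedom is the radial profile on $I_h$. Since $|r-1|\le h/2$ is small, I would Taylor-expand $\phi_j(r)=\alpha_j+(r-1)\beta_j+O((r-1)^2)$ for $j\in\{r,\theta,z\}$, reducing the quotient to a ratio of two quadratic forms in the leading Taylor coefficients with weights polynomial in $n$, $k$ and $h$. The essential coupling is that the in-plane strain $e_{\theta\theta}=(\phi_r+\partial_\theta\phi_\theta)/r$ mixes $\alpha_r$ with $in\alpha_\theta$: forcing $e_{\theta\theta}\approx 0$ at the midsurface imposes an ``inextensibility'' constraint $\alpha_r+in\alpha_\theta\approx 0$, so that the leading remaining strain energy is bending-like and is supplied by the linear-in-$(r-1)$ corrections to $\phi_r$, $\phi_\theta$, $\phi_z$.

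Minimizing the resulting mode ratios over $(n,k)$ under this (near-)constraint, the optimum should lie on the Koiter circle~\cite{koit45}, producing a minimum of order $h^{3/2}$; an explicit mode from this family simultaneously furnishes the matching upper bound $K(V_h)\le C(L)h^{3/2}$. The main obstacle will be the uniform lower bound on the quotient across all $(n,k)$: the low-frequency regime has to handle the near-rigid modes (translations, rotations, tilts), which must be ruled out using the conditions~(\ref{BCaverage}) together with the orthogonality across $n$, while the high-frequency regime requires a careful accounting of the competition between the membrane strain $e_{\theta\theta}$ and the bending contributions in order to show that no mode off the Koiter circle improves on $h^{3/2}$. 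I expect this latter balancing, together with the sharp treatment of the $r$-Taylor remainder, to be the technically heaviest part of the argument, which is why the authors defer it to Appendix~\ref{app:KI}.
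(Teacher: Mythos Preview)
Your Fourier-in-$(\theta,z)$ reduction is sound (both quadratic forms do diagonalize exactly in the basis you describe), but the paper's proof in Appendix~\ref{app:KI} takes a completely different route. Rather than analyzing the resulting one-dimensional radial problems mode by mode, the paper slices the shell by freezing one of $\theta$ or $z$ and proves two-dimensional Korn-type inequalities on thin rectangles: a harmonic-projection estimate (Lemma~\ref{lem:harmon}) controls the ``bad'' gradient component $w_y$ by $\|w\|\|w_x\|/h$ for harmonic $w$, and this yields $\|\BG_{\Ga}\|^2 \le C\|\Be_{\Ga}\|(\|\Be_{\Ga}\|+\|u\|/h)$ on $[0,h]\times[0,L]$ (Theorems~\ref{th:basicineq}--\ref{th:hard}). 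Integrating these cross-sectional estimates in the remaining variable produces the intermediate inequality~(\ref{preK}), and then a bootstrap on $\|u_r\|$ with an optimization parameter $\Ge=h^{1/4}$ closes to~(\ref{KI}). No Fourier expansion, no Taylor expansion in $r$, and no mode-by-mode minimization appear in the paper's argument.

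Your plan also has a genuine gap at the Taylor step. For the lower bound $\|e(\BGf)\|^2\ge c\,h^{3/2}\|\Grad\BGf\|^2$ you must handle arbitrary radial profiles $f_j\in H^1(I_h)$, and writing $f_j(r)=\Ga_j+(r-1)\Gb_j+O((r-1)^2)$ has no rigorous meaning at that regularity; the remainder is not controlled in any norm that lets you discard it, so reducing to a finite quadratic form in $(\Ga_j,\Gb_j)$ is not justified. Making this honest would require a genuine one-dimensional variational bound on $I_h$ for each $(n,k)$---which is essentially the content the paper extracts from its two-dimensional cross-sectional inequalities. Finally, your appeal to the Koiter circle conflates two distinct minimizations: the circle~(\ref{KoiterC}) locates the minimizers of the buckling functional $\mathfrak{K}^{*}$ (yielding $\Gl(h)\sim h$), whereas $K(V_h)\sim h^{3/2}$ is the minimum of a different quotient. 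The ansatz~(\ref{ansatz0}) that saturates $K(V_h)$ sits only at the endpoint $m=O(1)$, $n\sim h^{-1/4}$ of that circle, not along it, so invoking Koiter here is at best a heuristic and does not organize the lower-bound argument.
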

The theorem is proved in Appendix~\ref{app:KI}. We will also need the following
Korn-type inequalities whose proof is in Appendix~\ref{app:KI} as well.
\begin{lemma}
  \label{lem:KTI}
Suppose $u_{r}=u_{\Gth}=0$ at $z=0$ and $z=L$. Then
\begin{equation}
  \label{rtheta}
\|u_{z,\Gth}\|^{2}+\|u_{\Gth,z}\|^{2}\le 2\|e(\Bu)\|(\|e(\Bu)\|+\|u_{r}\|),
\end{equation}
and  there exist a
constant $C(L)>0$ depending only on $L$ and an absolute constant $h_0>0$
such that for all $h\in(0,h_0)$ and for all $L>0$
\begin{equation}
  \label{preK}
  \|\Grad\Bu\|^{2}\le C(L)\|e(\Bu)\|\left(\|e(\Bu)\|+\frac{\|u_{r}\|}{h}\right).
\end{equation}
\end{lemma}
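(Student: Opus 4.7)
The proof splits cleanly into the two stated inequalities, with (\ref{preK}) building on (\ref{rtheta}).

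For (\ref{rtheta}), my plan is to exploit the cylindrical identity $2re_{\Gth z}=u_{z,\Gth}+ru_{\Gth,z}$. Squaring and integrating over $\CC_{h}$ gives, up to the $O(h)$ corrections that come from $1/r=1+O(h)$ on $I_{h}$,
\[
\|u_{z,\Gth}\|^{2}+\|u_{\Gth,z}\|^{2}=4\|e_{\Gth z}\|^{2}-2\int_{\CC_{h}}u_{z,\Gth}\,u_{\Gth,z}\,dV.
\]
The cross integral is then transformed by two successive integrations by parts: first in $z$, where the endpoint contributions vanish by virtue of $u_{\Gth}|_{z=0,L}=0$, and then in $\Gth$, by $2\pi$-periodicity. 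The result is $\int u_{z,\Gth}u_{\Gth,z}\,dV=\int u_{z,z}u_{\Gth,\Gth}\,dV$, into which I substitute the cylindrical kinematic relations $u_{z,z}=e_{zz}$ and $u_{\Gth,\Gth}=re_{\Gth\Gth}-u_{r}$. Applying Cauchy--Schwarz together with $2|e_{zz}e_{\Gth\Gth}|\le e_{zz}^{2}+e_{\Gth\Gth}^{2}$ and the algebraic observation $4\|e_{\Gth z}\|^{2}+\|e_{zz}\|^{2}+\|e_{\Gth\Gth}\|^{2}\le 2\|e(\Bu)\|^{2}$ produces the stated bound.

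For (\ref{preK}) I would decompose $|\Grad\Bu|^{2}$ into its nine cylindrical components. The three diagonal entries coincide with $e_{rr},e_{\Gth\Gth},e_{zz}$ and are controlled by $\|e(\Bu)\|^{2}$; the pair $(u_{z,\Gth}/r,u_{\Gth,z})$ is controlled by (\ref{rtheta}), and a Poincar\'e inequality in $z$ (using $u_{\Gth}|_{z=0,L}=0$) then also bounds $u_{\Gth}/r$. The remaining ``radial--tangential'' pairs $(u_{r,z},u_{z,r})$ and $(u_{r,\Gth}/r-u_{\Gth}/r,u_{\Gth,r})$ are the new work and are where the $\|u_{r}\|/h$ factor enters. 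For the first pair, start from $\|u_{r,z}\|^{2}+\|u_{z,r}\|^{2}=4\|e_{rz}\|^{2}-2\int u_{r,z}u_{z,r}\,dV$. Integration by parts in $z$ (using $u_{r}|_{z=0,L}=0$) converts the cross integral to $-\int u_{r}u_{z,zr}\,dV$, and a further integration by parts in $r$ rewrites it as $\int e_{rr}e_{zz}\,dV$ plus a lateral--surface boundary term at $r=1\pm h/2$.

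To bound that surface term while staying in $W^{1,2}$, I would rewrite it as a volume integral via the fundamental theorem of calculus in $r$, which costs the factor $1/h$, and then integrate its worst piece, $u_{r}u_{z,zr}$, by parts in $z$ once more to recast it as $-\int u_{r,z}u_{z,r}\,dV$. The net estimate becomes
\[
|\text{b.t.}|\le \frac{C}{h}\|u_{r}\|\|e(\Bu)\|+C\|e(\Bu)\|^{2}+C\|u_{r,z}\|\|u_{z,r}\|,
\]
after which Young's inequality absorbs the last summand into the left-hand side. A parallel argument for $(u_{r,\Gth}/r-u_{\Gth}/r,u_{\Gth,r})$, using $\Gth$-periodicity and the control on $u_{\Gth}$ already established, completes the estimate.

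The principal obstacle is exactly this lateral boundary term: on the free surfaces $r=1\pm h/2$ there is no Dirichlet data on $\Bu$, and a naive $H^{1}\to L^{2}(\partial)$ trace inequality applied to $e_{zz}=u_{z,z}$ would require a second derivative that is unavailable at the $W^{1,2}$ regularity level. The ``smear-and-reintegrate'' device above is what circumvents this obstruction: one re-expresses the boundary integral as a volume integral (paying the factor $1/h$ once, against $\|u_{r}\|$) and then exploits the $z$-boundary condition on $u_{r}$ to transfer the troublesome $z$-derivative, so that only first-order quantities of $\Bu$ plus the absorbable cross product $\|u_{r,z}\|\|u_{z,r}\|$ survive.
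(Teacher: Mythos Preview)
Your argument for \eqref{rtheta} is essentially the paper's: integrate $(u_{z,\Gth},u_{\Gth,z})$ by parts to $(u_{z,z},u_{\Gth,\Gth})$, write $u_{\Gth,\Gth}=r e_{\Gth\Gth}-u_{r}$, and estimate. Fine.

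The gap is in \eqref{preK}, specifically in your treatment of the $(u_{r,z},u_{z,r})$ pair. Your ``smear-and-reintegrate'' device is circular. The boundary term coming out of the $r$-integration by parts is
\[
\int_{\bb{T}}\int_{0}^{L}\Bigl[u_{r}\,u_{z,z}\Bigr]_{r=1-h/2}^{r=1+h/2}dz\,d\Gth,
\]
i.e.\ a \emph{difference} of the two lateral traces. If you smear each trace separately against the $r$-average, the two $\tfrac{1}{h}\int_{\CC_{h}}u_{r}e_{zz}$ contributions cancel exactly, and what remains is $\int_{\CC_{h}}(u_{r}u_{z,z})_{,r}\,dV$ --- which is the fundamental theorem of calculus undoing the very integration by parts you just performed. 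After your further integration by parts in $z$ you recover $-\int u_{r,z}u_{z,r}\,dV$ identically, and the chain of equalities collapses to $0=0$; the promised $\|u_{r}\|/h$ term never materializes. If instead you bound the two traces separately by the triangle inequality (so the $1/h$ terms survive), each face contributes a residual $\|u_{r,z}\|\|u_{z,r}\|$ with coefficient $1$, so after the factor of $2$ in $-2\int u_{r,z}u_{z,r}$ you must absorb $4\|u_{r,z}\|\|u_{z,r}\|$ into $\|u_{r,z}\|^{2}+\|u_{z,r}\|^{2}$, which is impossible. The same obstruction hits the $(r,\Gth)$ pair.

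The paper does not attempt this route. It first replaces $\Grad\Bu$ and $e(\Bu)$ by the simplified quantities $\BA$ and $e(\BA)$ (differing by $O(h)|\Grad\Bu|$), and then obtains the $rz$ and $r\Gth$ estimates by freezing $\Gth$ (resp.\ $z$) and invoking two-dimensional Korn-type inequalities on the thin rectangle $I_{h}\times[0,L]$ (resp.\ $I_{h}\times\bb{T}$) with the correct boundary conditions. Those 2D inequalities (Theorems~\ref{th:basicineq} and~\ref{th:hard}) are the missing ingredient; their proof goes through a harmonic extension: one writes $u_{r}=w+(u_{r}-w)$ with $w$ the harmonic function matching $u_{r}$ on the boundary of the rectangle, shows $\|\Grad(u_{r}-w)\|\le C\|e\|$ by testing $\Delta(u_{r}-w)=\Delta u_{r}$ (which is a combination of strain derivatives) against $u_{r}-w$, and then proves for harmonic $w$ the sharp estimate $\|w_{y}\|^{2}\le\tfrac{2\sqrt{3}}{h}\|w\|\|w_{x}\|+\|w_{x}\|^{2}$ by explicit Fourier computation. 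This is where the $\|u_{r}\|/h$ genuinely enters; no lateral trace of a strain component is ever taken.
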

We remark that the power of $h$ in the inequality (\ref{KI}) is
optimal. Indeed, let $n_{h}=[h^{-1/4}]$ (integer part of $h^{-1/4}$) and
$0<\eta_0<\pi$. Let $\Gvf(\eta,z)$ be a smooth compactly supported function on
 $(-\eta_{0},\eta_{0})\times(0,L)$. We define
 \begin{equation}
   \label{phiphi}
\phi^{h}(\Gth,z)=\Gvf(n_{h}\Gth,z),\quad\Gth\in[-\pi,\pi],\ z\in[0,L].   
 \end{equation}
Extended $2\pi$-periodically in $\Gth$ the function $\phi^{h}$ can be regarded
as a smooth function on $\bb{T}\times[0,L]$. We now define the ansatz
$\BU^{h}(r,\Gth,z)$ as follows
\begin{equation}
\begin{cases}
\label{ansatz0}
U^{h}_{r}(r,\Gth,z)=&-\phi^{h}_{,\Gth\Gth}(\Gth,z)\\[2ex]
U^{h}_{\Gth}(r,\Gth,z)=&r\phi^{h}_{,\Gth}(\Gth,z)+(r-1)\phi^{h}_{,\Gth\Gth\Gth}(\Gth,z),\\[2ex]
U^{h}_{z}(r,\Gth,z)=&(r-1)\phi_{,\Gth\Gth z}(\Gth,z)-\phi^{h}_{,z}(\Gth,z).
\end{cases}
\end{equation}
We compute 
\begin{equation}
  \label{ansgrad}
\lim_{h\to 0}h^{1/4}\|\Grad\BU^{h}\|^{2}=\|\Gvf_{,\eta\eta\eta}\|_{L^{2}(\bb{R}^{2})}^{2}.  
\end{equation}
while 
\begin{equation}
  \label{Eans}
\lim_{h\to 0}h^{-5/4}\|e(\BU^{h})\|^{2}=\|\Gvf_{,zz}\|_{L^{2}(\bb{R}^{2})}^{2}+
\nth{12}\|\Gvf_{,\eta\eta\eta\eta}\|_{L^{2}(\bb{R}^{2})}^{2},  
\end{equation}
producing
\[
K(V_{h})=O(h^{3/2}).
\]
Let $(\Grad\BU)_{\Ga\Gb}$ be the components of $\Grad\BU$ in cylindrical coordinates
\begin{equation}
  \label{gradcomp}
\Grad\BU=\sum_{\{\Ga,\Gb\}\subset\{r,\Gth,z\}}(\Grad\BU)_{\Ga\Gb}\Be_{\Ga}\otimes\Be_{\Gb}.  
\end{equation}
We compute
\begin{equation}
  \label{KCas}
  \|(\Grad\BU^{h})_{\Gth r}\|^{2}+\|(\Grad\BU^{h})_{r\Gth}\|^{2}=O\Big(\frac{1}{h^{1/4}}\Big)=
O\left(\frac{\|e(\BU^{h})\|^{2}}{K(V_{h})}\right),
\end{equation}
\begin{equation}
  \label{has}
  \|(\Grad\BU^{h})_{z r}\|^{2}+\|(\Grad\BU^{h})_{r z}\|^{2}=O(h^{1/4})=O\left(\frac{\|e(\BU^{h})\|^{2}}{h}\right)
\end{equation}
\begin{equation}
  \label{sqrthas}
  \|(\Grad\BU^{h})_{\Gth z}\|^{2}+\|(\Grad\BU^{h})_{z\Gth}\|^{2}=O(h^{3/4})=
O\left(\frac{\|e(\BU^{h})\|^{2}}{\sqrt{h}}\right),
\end{equation}
\begin{equation}
  \label{1as}
  \|(\Grad\BU^{h})_{\Gth\Gth}\|^{2}+\|(\Grad\BU^{h})_{zz}\|^{2}=O(h^{5/4})=O\left(\|e(\BU^{h})\|^{2}\right)
\end{equation}
In addition we also have
\begin{equation}
  \label{Uras}
\|U_{r}^{h}\|^{2}=O(h)=O\left(\frac{\|e(\BU^{h})\|^{2}}{h}\right).  
\end{equation}
\begin{remark}
\label{rem:anzatz0}
All functions $\phi(\eta,z)$ in (\ref{ansatz0}) constructed via (\ref{phiphi})
vanish together with all their derivatives at $z=0,L$, producing test functions
$\BU^{h}$ that satisfy the boundary conditions (\ref{BCfixed}) and thus also,
the boundary conditions (\ref{BCaverage}). Therefore, Theorem~\ref{th:KI} and
Lemma~\ref{lem:KTI} hold for the space $W_{h}$ defined by (\ref{Wh}).
\end{remark}

\section{Korn-like inequalities for gradient components}
\setcounter{equation}{0}
\label{sec:KLI}
In order to understand the buckling of the thin walled cylinders we also need
to estimate the $L^{2}$ norm of the individual components of $\Grad\Bu$
defined in (\ref{gradcomp}) in terms of $\|e(\Bu)\|^{2}$. In this section we
will prove that the asymptotics (\ref{KCas})--(\ref{Uras}) of gradient
components of the test function (\ref{ansatz0}) is optimal. In fact,
the inequalities
\[
\|(\Grad\Bu)_{\Gth\Gth}\|^{2}=\left\|\frac{u_{\Gth,\Gth}+u_{r}}{r}\right\|^{2}\le\|e(\Bu)\|^{2},
\qquad\|(\Grad\Bu)_{zz}\|^{2}=\|u_{z,z}\|^{2}\le\|e(\Bu)\|^{2}
\]
are obvious, while the inequalities
\[
\|(\Grad\Bu)_{r\Gth}\|^{2}=\left\|\frac{u_{r,\Gth}-u_{\Gth}}{r}\right\|^{2}\le
\frac{C(L)}{h\sqrt{h}}\|e(\Bu)\|^{2},\qquad
\|(\Grad\Bu)_{\Gth r}\|^{2}=\|u_{\Gth,r}\|^{2}\le\frac{C(L)}{h\sqrt{h}}\|e(\Bu)\|^{2}
\]
are the immediate consequence of the Korn inequality (\ref{KI}). The $L^{2}$
norms $\|(\Grad\Bu)_{rz}\|=\|u_{r,z}\|$ and $\|(\Grad\Bu)_{zr}\|$ are within
$\|e(\Bu)\|$ of each other, while the same is true for $\|(\Grad\Bu)_{\Gth
  z}\|=\|u_{\Gth,z}\|$ and $\|(\Grad\Bu)_{z\Gth}\|$.  Thus, in order to show
that the estimates (\ref{KCas})--(\ref{Uras}) are optimal it suffices to prove
the upper bounds on $\|u_{r,z}\|$, $\|u_{\Gth,z}\|$ and $\|u_{r}\|$.
\begin{lemma}
  \label{lem:KLI}
Suppose $u_{r}=u_{\Gth}=0$ at $z=0$ and $z=L$. Then there exists a constant
$C(L)$ depending only on $L$ such that
\begin{equation}
  \label{u}
  \|u_{r}\|^{2}\le\frac{C(L)}{h}\|e(\Bu)\|^{2}.
\end{equation}
\begin{equation}
  \label{rz}
  \|u_{r,z}\|^{2}\le\frac{C(L)}{h}\|e(\Bu)\|^{2},
\end{equation}
\begin{equation}
  \label{thetaz}
  \|u_{\Gth,z}\|^{2}\le\frac{C(L)}{\sqrt{h}}\|e(\Bu)\|^{2},
\end{equation}
\end{lemma}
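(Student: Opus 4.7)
The three inequalities are proven in the order (\ref{u}), (\ref{thetaz}), (\ref{rz}), the last two flowing from (\ref{u}) together with Lemma~\ref{lem:KTI}, Theorem~\ref{th:KI}, and integration by parts adapted to the cylindrical geometry. The organizing idea is to use periodicity in $\theta$ and the Dirichlet conditions $u_r = u_\theta = 0$ at $z=0,L$ to kill boundary contributions at the top and bottom of the shell, handling the remaining lateral-surface terms at $r = 1 \pm h/2$ via the elementary one-dimensional trace inequality
\[
f(1\pm h/2)^2 \le \frac{2}{h}\|f\|_{L^2(I_h)}^2 + 2h\|f'\|_{L^2(I_h)}^2.
\]

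For (\ref{u}), start from the strain identity $u_r = r e_{\theta\theta} - u_{\theta,\theta}$ and integrate against $u_r$. Integration by parts in $\theta$ converts the troublesome piece into $\int u_{r,\theta} u_\theta\, dV$; substituting $u_{r,\theta} = u_\theta + r(2e_{r\theta} - u_{\theta,r})$ (from the definition of $e_{r\theta}$) and integrating the $u_{\theta,r}$ contribution by parts in $r$ produces a lateral trace of $u_\theta^2$. That trace is estimated by the display above, giving a term of the form $h^{-1}\|u_\theta\|^2 + h\|u_{\theta,r}\|^2$. The first piece is controlled by Poincar\'e in $z$ (valid since $u_\theta|_{z=0,L}=0$) combined with inequality (\ref{rtheta}) of Lemma~\ref{lem:KTI}, and the second piece by the Korn inequality (\ref{KI}); Young's inequality then allows absorption of the $\|u_r\|^2$-terms on the right-hand side to yield (\ref{u}).

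Inequality (\ref{thetaz}) is then an immediate consequence of (\ref{rtheta}) and (\ref{u}):
\[
\|u_{\theta,z}\|^2 \le 2\|e(\Bu)\|(\|e(\Bu)\| + \|u_r\|) \le 2\|e(\Bu)\|^2 + 2\sqrt{C(L)/h}\,\|e(\Bu)\|^2 \le \frac{C'(L)}{\sqrt{h}}\|e(\Bu)\|^2.
\]

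For (\ref{rz}), use $u_{r,z} = 2 e_{rz} - u_{z,r}$ to write
\[
\|u_{r,z}\|^2 = 2\int u_{r,z} e_{rz}\,dV - \int u_{r,z} u_{z,r}\,dV.
\]
The cross term, after integration by parts in $z$ (using $u_r|_{z=0,L}=0$), becomes $\int u_r (e_{zz})_{,r}\,dV$; a further integration by parts in $r$ produces a lateral trace $[u_r e_{zz} r]_{r=1\pm h/2}$ plus interior terms bounded by $\|e(\Bu)\|^2$ and $\|u_r\|\|e(\Bu)\|$, the latter controlled via (\ref{u}). The lateral trace is again estimated by the trace inequality, with the $u_r$ trace controlled by (\ref{u}) and the $e_{zz}$ trace handled by an additional integration by parts in $z$ that, via the identity $u_{z,zr} = 2 e_{rz,z} - u_{r,zz}$ and the boundary condition on $u_r$, trades the would-be second-derivative contribution for first-derivative quantities already estimated. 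The main obstacle will be step~1: a na\"ive combination of the trace and Poincar\'e inequalities yields only $\|u_r\|^2 \le C h^{-2}\|e(\Bu)\|^2$, so one must track the cancellations between the trace contributions and the bulk $\|u_\theta\|^2$ term with care to gain the crucial extra factor of $h$.
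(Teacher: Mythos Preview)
Your strategy for (\ref{u}) is different from the paper's and, with some care, actually works: writing $u_r=re_{\theta\theta}-u_{\theta,\theta}$, integrating by parts in $\theta$, and then using $u_{r,\theta}=u_\theta+r(2e_{r\theta}-u_{\theta,r})$ leads (without any need to integrate by parts in $r$; just bound $\int r^2 u_\theta u_{\theta,r}$ directly) to
\[
\|u_r\|^2\le C\|e(\Bu)\|\,\|u_r\|+\|u_\theta\|^2+C\|e(\Bu)\|\,\|u_\theta\|+C\|u_\theta\|\,\|u_{\theta,r}\|.
\]
Poincar\'e in $z$ together with (\ref{rtheta}) gives $\|u_\theta\|^2\le C(L)\|e(\Bu)\|(\|e(\Bu)\|+\|u_r\|)$, while Theorem~\ref{th:KI} gives $\|u_{\theta,r}\|\le C(L)h^{-3/4}\|e(\Bu)\|$; setting $x=\|u_r\|/\|e(\Bu)\|$ one finds $x^2\le C(L)(x+h^{-3/4}\sqrt{x})$, hence $x\le C(L)h^{-1/2}$, which is (\ref{u}). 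The deduction of (\ref{thetaz}) from (\ref{u}) and (\ref{rtheta}) is correct and coincides with the paper's.

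The real problem is your argument for (\ref{rz}). After the two integrations by parts you describe, the cross term $-(u_{r,z},u_{z,r})$ equals the lateral trace $T=\int_\theta\int_z[r\,u_r e_{zz}]_{1-h/2}^{1+h/2}$ plus interior terms you can handle. But this trace cannot be closed by your proposed manipulations. If you estimate $T$ via trace inequalities you are forced to control $h\|e_{zz,r}\|^2=h\|u_{z,zr}\|^2$, and the identity $u_{z,zr}=2e_{rz,z}-u_{r,zz}$ you invoke does not help: both terms on the right are second derivatives of $\Bu$, not strain components. If instead you rewrite $T$ by the fundamental theorem of calculus, $T=\int(u_r e_{zz}+re_{rr}e_{zz}+ru_r e_{zz,r})\,dr\,d\theta\,dz$, and integrate the last term by parts in $z$, you recover exactly $-(u_{r,z},u_{z,r})$, so the computation is circular. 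The same happens if you integrate by parts in $z$ on the lateral surface first: $T=-\int[r\,u_{r,z}u_z]_{1\pm h/2}$, but now you need traces of $u_{r,z}$ (second derivatives again) or of $u_z$ (not controlled by $\|e(\Bu)\|$ at all). In short, the integration-by-parts route does not separate the lateral trace from the bulk cross term; the two are the same quantity in disguise.

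The paper proceeds in the opposite order: it proves (\ref{rz}) first and gets (\ref{u}) by Poincar\'e. The key device you are missing is a Fourier decomposition in $(\theta,z)$: extend $u_r,u_\theta$ oddly and $u_z$ evenly to $2L$-periodic functions, and reduce to single modes $\Bv=\Bv^{(m,n)}$. If $\|v_r\|\le 3\|e(\Bv)\|$ the estimate follows at once from (\ref{preK}). Otherwise one combines two mode-level inequalities,
\[
n^2\|v_r\|\le \frac{C(L)}{h}\|e(\Bv)\|
\quad\text{and}\quad
\|v_r\|\le \frac{C(L)\,n^2}{m^2}\|e(\Bv)\|,
\]
the first from (\ref{preK}) and $\|v_{r,\theta}\|=n\|v_r\|$, the second from $e_{\theta\theta}$ and (\ref{rtheta}); multiplying gives $m^2\|v_r\|^2\le C(L)h^{-1}\|e(\Bv)\|^2$, which is exactly (\ref{rz}) for the mode. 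This multiplicative trick, available only after passing to Fourier modes, is the missing idea.
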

\begin{proof}
  We first observe that the inequality (\ref{u}) follows from (\ref{rz}) via
  the Poincar\'e inequality, and that (\ref{thetaz}) is a direct consequence
  of (\ref{u}) and (\ref{rtheta}). Hence, we only need to prove (\ref{rz}).
  The proof is based on the Fourier series in $\Gth$ and $z$ variables.  For
  this purpose we need to extend $\Bu(r,\Gth,z)$ as a periodic function in
  $z\in\bb{R}$. Our boundary conditions suggest that $u_{r}$ and $u_{\Gth}$
  must be extended as odd $2L$-periodic functions, while $u_{z}$ will be
  extended as an even $2L$-periodic function. Such an extension results in
  $H^{1}(\bb{T}^{2}\times I_{h};\bb{R}^{3})$ functions whenever $\Bu\in
  H^{1}(\bb{T}\times[0,L]\times I_{h};\bb{R}^{3})$. Here $\bb{T}$ and
  $\bb{T}^{2}$ denote 1 and 2-dimensional tori, corresponding to
  $2\pi$-periodicity in $\Gth$ and $[0,2\pi]\times[-L,L]$-periodicity in
  $(\Gth,z)$, respectively. Denoting the periodic extension without relabeling
  we have
  \begin{equation}
    \label{Fourier}
\Bu(r,\Gth,z)=\sum_{m=0}^{\infty}\sum_{n\in\mathbb Z}\Bu^{(m,n)}(r,\Gth,z),
  \end{equation}
where
\[
\begin{cases}
  u_{r}^{(m,n)}=
\Hat{\phi}_{r}(r;m,n)\sin\left(\dfrac{\pi mz}{L}\right)e^{in\Gth},&
\Hat{\phi}_{r}(r;m,n)=\nth{\pi L}\int_{0}^{2\pi}\int_{0}^{L}
u_{r}\sin\left(\dfrac{\pi mz}{L}\right)e^{in\Gth}dzd\Gth\\[3ex]
u_{\Gth}^{(m,n)}=
\Hat{\phi}_{\theta}(r;m,n)\sin\left(\dfrac{\pi mz}{L}\right)e^{in\Gth},&
\Hat{\phi}_{\theta}(r;m,n)=\nth{\pi L}\int_{0}^{2\pi}\int_{0}^{L}
u_{\Gth}\sin\left(\dfrac{\pi mz}{L}\right)e^{in\Gth}dzd\Gth\\[3ex]
u_{z}^{(m,n)}=
\Hat{\phi}_{z}(r;m,n)\cos\left(\dfrac{\pi mz}{L}\right)e^{in\Gth},&
\Hat{\phi}_{z}(r;m,n)=\nth{\pi L}\int_{0}^{2\pi}\int_{0}^{L}
u_{z}\cos\left(\dfrac{\pi mz}{L}\right)e^{in\Gth}dzd\Gth.
\end{cases}
\]
We observe that our periodic extension has the property
\[
\Grad\Bu(r,\Gth,-z)=-\left[
  \begin{array}{rrr}
    1 & 0 & 0\\
    0 & 1 & 0\\
    0 & 0 & -1
  \end{array}
\right]\Grad\Bu(r,\Gth,z)\left[
  \begin{array}{rrr}
    1 & 0 & 0\\
    0 & 1 & 0\\
    0 & 0 & -1
  \end{array}
\right]
\]
in cylindrical coordinates, and hence
\[
\int_{0}^{2\pi}\int_{-L}^{L}|F_{ij}|^{2}d\Gth dz=2\int_{0}^{2\pi}\int_{0}^{L}|F_{ij}|^{2}d\Gth dz,
\]
for all cylindrical components $F_{ij}$ of $\Grad\Bu$.  Therefore, it is
sufficient to prove (\ref{rz}) for functions of the form
\[
\Bv^{(m,n)}(r,\Gth,z)=\left(f_{r}(r)\sin\left(\dfrac{\pi mz}{L}\right),
f_{\Gth}(r)\sin\left(\dfrac{\pi mz}{L}\right),f_{z}(r)\cos\left(\dfrac{\pi mz}{L}\right)
\right)e^{in\Gth}.
\]
Indeed,
\[
\|u_{r,z}\|^{2}=\pi L\sum_{m=1}^{\infty}\sum_{n\in\bb{Z}}\|u^{(m,n)}_{r,z}\|^{2}\le
\pi L\sum_{m=0}^{\infty}\sum_{n\in\bb{Z}}\frac{C(L)}{h}\|e(\Bu^{(m,n)})\|^{2}=
\frac{C(L)}{h}\|e(\Bu)\|^{2}.
\]

Observe that all functions of the form $\Bv^{(m,n)}$
satisfy the \bc s from Theorem~\ref{th:KI}. Therefore, Theorem~\ref{th:KI} and
Lemma~\ref{lem:KTI} are applicable to such functions. We now fix $m\ge 1$ and
$n\in\bb{Z}$ and for simplicity of notation we use $(v_{r},v_{\Gth},v_{z})$
instead of $(v_{r}^{(m,n)},v_{\Gth}^{(m,n)},v_{z}^{(m,n)})$.

We notice that if $\|v_{r}\|\le 3\|e(\Bv)\|$, then the inequality
(\ref{preK}) implies that
\[
\|v_{r,z}\|^{2}\le \|\Grad\Bv\|^{2}\le\frac{C(L)}{h}\|e(\Bv)\|^{2},
\]
and (\ref{rz}) is proved. Let us prove the inequality (\ref{rz}) under the
assumption that $\|v_{r}\|>3\|e(\Bv)\|$. In that case the inequalities
(\ref{rtheta}) and (\ref{preK}) become
\begin{equation}
  \label{rtheta1}
\|v_{z,\Gth}\|^{2}+\|v_{\Gth,z}\|^{2}\le \frac{8}{3}\|e(\Bv)\|\|v_{r}\|,
\end{equation}
\begin{equation}
  \label{preK1}
  \|\Grad\Bv\|^{2}\le \frac{C(L)}{h}\|e(\Bv)\|\|v_{r}\|.
\end{equation}
We estimate
\[
n^2\|v_r\|^2=\|v_{r,\theta}\|^2\leq 2\|v_{r,\theta}-v_\theta\|^2+2\|v_\theta\|^2
\leq 5\|\nabla \Bv\|^2+\frac{L^{2}}{\pi^{2}m^{2}}\|v_{\theta,z}\|^2\le
C(L)\|\nabla \Bv\|^2.
\]
Applying the inequality (\ref{preK1}) we obtain
\begin{equation}
\label{n^2urleqe(u)}
n^2\|v_r\|\leq \frac{C(L)}{h}\|e(\Bv)\|.
\end{equation}
We estimate
\[
\|v_r\|^2\leq 2\|v_r+v_{\theta,\theta}\|^2+2\|v_{\theta,\theta}\|^2
\leq 5\|e(\Bv)\|^2+2n^2\|v_\theta\|^2,
\]
and
\[
\frac{m^2\pi^2}{L^2}\|v_\theta\|^2=\|v_{\theta,z}\|^2\leq
\frac{8}{3}\|e(\Bv)\|\|v_r\|,
\]
due to (\ref{rtheta1}). Combining the two inequalities we obtain
\begin{equation}
  \label{vrineq1}
\|v_r\|^2\leq 5\|e(\Bv)\|^2+\frac{16L^{2}n^2}{3m^2\pi^2}\|e(\Bv)\|\|v_r\|.
\end{equation}
By our assumption $\|e(\Bv)\|^{2}<\|v_{r}\|^{2}/9$. We use this
inequality to estimate the first term on the \rhs\ of (\ref{vrineq1}) and obtain
\begin{equation}
\label{u_rleqe(u)2}
\|v_r\|\leq\frac{12L^{2}n^2}{m^2\pi^2}\|e(\Bv)\|.
\end{equation}
Using $\|e(\Bv)\|<\|v_{r}\|/3$ again, we conclude that $m\le C_{0}L|n|$ for some
absolute constant $C_{0}>0$. In particular, $n\not=0$, since $m\ge 1$. Finally,
multiplying now (\ref{n^2urleqe(u)}) and (\ref{u_rleqe(u)2}) we get
\[
m^2\|v_r\|^2\leq \frac{C(L)}{h}\|e(\Bv)\|^2,
\]
which completes the proof.
\end{proof}
Thus, we have established the following Korn-like inequalities for gradient
components.
\begin{theorem}
  \label{th:KLI}
Suppose $u_{r}=u_{\Gth}=0$ at $z=0$ and $z=L$. Then there exists a constant
$C(L)$ depending only on $L$ such that
  \begin{equation}
    \label{ththzz}
    \|(\Grad\Bu)_{\Gth\Gth}\|^{2}+\|(\Grad\Bu)_{zz}\|^{2}\le\|e(\Bu)\|^{2},
  \end{equation}
  \begin{equation}
    \label{rthr}
    \|(\Grad\Bu)_{r\Gth}\|^{2}+\|(\Grad\Bu)_{\Gth r}\|^{2}\le\frac{C(L)}{h\sqrt{h}}\|e(\Bu)\|^{2},
  \end{equation}
  \begin{equation}
    \label{urzr}
\|u_{r}\|^{2}+\|(\Grad\Bu)_{rz}\|^{2}+\|(\Grad\Bu)_{zr}\|^{2}\le\frac{C(L)}{h}\|e(\Bu)\|^{2},
  \end{equation}
  \begin{equation}
    \label{zthz}
\|(\Grad\Bu)_{\Gth z}\|^{2}+\|(\Grad\Bu)_{z\Gth}\|^{2}\le\frac{C(L)}{\sqrt{h}}\|e(\Bu)\|^{2},
  \end{equation}
\end{theorem}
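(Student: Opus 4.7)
The theorem is essentially a packaging result: almost every ingredient is already in place. My plan is to recognize each of the four inequalities as an immediate corollary of either (i) the cylindrical formulas for the components of $e(\Bu)$ and $\nabla\Bu$, (ii) the Korn inequality (\ref{KI}) of Theorem~\ref{th:KI}, or (iii) the Korn-like inequalities (\ref{u}), (\ref{rz}), (\ref{thetaz}) from Lemma~\ref{lem:KLI}. The only structural observation I need is that for each off-diagonal pair $(\alpha,\beta)\neq(\beta,\alpha)$, the corresponding strain component $(e(\Bu))_{\alpha\beta}$ expresses the symmetric part of the two gradient entries, so controlling one gradient entry and $\|e(\Bu)\|$ controls the other entry up to a factor.

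For (\ref{ththzz}), I would simply note that $(\nabla\Bu)_{\theta\theta} = (u_{\theta,\theta}+u_r)/r = (e(\Bu))_{\theta\theta}$ and $(\nabla\Bu)_{zz} = u_{z,z} = (e(\Bu))_{zz}$, so the sum of the two squared $L^2$ norms is bounded by $\|e(\Bu)\|^2$ (two components of the symmetric tensor). For (\ref{rthr}), both $\|(\nabla\Bu)_{r\theta}\|^2$ and $\|(\nabla\Bu)_{\theta r}\|^2$ are pieces of $\|\nabla\Bu\|^2$, which is bounded by $C(L)h^{-3/2}\|e(\Bu)\|^2$ via Theorem~\ref{th:KI}.

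For (\ref{urzr}), I use (\ref{u}) to handle $\|u_r\|^2$ and (\ref{rz}) to handle $\|(\nabla\Bu)_{rz}\|^2 = \|u_{r,z}\|^2$. The remaining term $\|(\nabla\Bu)_{zr}\|^2 = \|u_{z,r}\|^2$ is obtained from the identity $u_{z,r} = 2(e(\Bu))_{rz} - u_{r,z}$, which gives $\|u_{z,r}\|^2 \le 8\|(e(\Bu))_{rz}\|^2 + 2\|u_{r,z}\|^2 \le C'(L)h^{-1}\|e(\Bu)\|^2$, since the ambient $\|e(\Bu)\|^2$ term is absorbed for $h$ small. The inequality (\ref{zthz}) is handled the same way: (\ref{thetaz}) gives the bound on $\|(\nabla\Bu)_{\theta z}\|^2 = \|u_{\theta,z}\|^2$, and the identity $u_{z,\theta}/r = 2(e(\Bu))_{\theta z} - u_{\theta,z}$ transfers this bound to $\|(\nabla\Bu)_{z\theta}\|^2$, with the same absorption of lower-order $\|e(\Bu)\|^2$ contributions into the $C(L)h^{-1/2}$ term.

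There is really no hard step here; the heavy lifting was done in Theorem~\ref{th:KI} and, especially, in the Fourier-mode analysis of Lemma~\ref{lem:KLI} which controls $\|u_{r,z}\|$. The only care required is to keep track of which component of the gradient equals which component of the strain in cylindrical coordinates and to use $2e_{\alpha\beta} = (\nabla\Bu)_{\alpha\beta} + (\nabla\Bu)_{\beta\alpha}$ to pass from one partner in a pair to the other without picking up a worse $h$-dependence. Once this bookkeeping is done, the four inequalities (\ref{ththzz})--(\ref{zthz}) follow directly and the proof is complete.
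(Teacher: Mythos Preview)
Your proposal is correct and follows essentially the same approach as the paper: the paper derives (\ref{ththzz}) from the fact that the diagonal gradient components coincide with strain components, (\ref{rthr}) directly from the Korn inequality (\ref{KI}), and (\ref{urzr})--(\ref{zthz}) from Lemma~\ref{lem:KLI} together with the observation that the two off-diagonal partners $(\Grad\Bu)_{\Ga\Gb}$ and $(\Grad\Bu)_{\Gb\Ga}$ differ in $L^{2}$ norm by at most $\|e(\Bu)\|$. Your bookkeeping of the symmetric-part identity $2e_{\Ga\Gb}=(\Grad\Bu)_{\Ga\Gb}+(\Grad\Bu)_{\Gb\Ga}$ to transfer bounds between partners is exactly what the paper does.
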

\begin{remark}
  \label{rem:KLI}
Theorem~\ref{th:KLI} is obviously valid for $\Bu\in W_{h}\subset V_{h}$. The
inequalities (\ref{u})--(\ref{thetaz}) are also sharp in $W_{h}$, since, as we
mentioned in Remark~\ref{rem:anzatz0} the ansatz (\ref{ansatz0}), on which the
asymptotically behavior of gradient components is achieved contains many
functions in $W_{h}$.
\end{remark}

\section{Trivial branch in a perfect cylindrical shell}
\setcounter{equation}{0}
\label{sec:trbr}
By perfect cylinder we understand the set, given in cylindrical coordinates as
\[
\CC_{h}=\{(r,\Gth,z):r\in I_{h},\ \Gth\in\bb{T},\ z\in[0,L]\}.
\]
In order to describe the imposed \bc s and loading we need to specify the
space $V_{h}$ and the
functions $\Bt(\Bx;\Gl,h)$ and $\bra{\By}(\Bx;\Gl,h)$ in (\ref{energy}) and
(\ref{hdbc}), respectively. For the space $V_{h}$ given by (\ref{Vh}) we define
\begin{equation}
  \label{trac}
\Bt(\Bx;h,\Gl)=
\begin{cases}
  \Bzr,&r=1\pm\frac{h}{2},\ \Gth\in\bb{T},\ z\in(0,L),\\
\Gl\Be_{z},&r\in I_{h},\ \Gth\in\bb{T},\ z=0,\\
  -\Gl\Be_{z},&r\in I_{h},\ \Gth\in\bb{T},\ z=L.
\end{cases}
\end{equation}
We also have $\bra{\By}(\Bx;\Gl,h)=\Bx+\bra{\BU}(\Bx;\Gl,h)$, where
\begin{equation}
  \label{Ubar}
\bra{U}^{(r)}(r,\Gth,z;h,\Gl)=a(\Gl)r,\qquad \bra{U}^{(\Gth)}(r,\Gth,z;h,\Gl)=0,\qquad
\bra{U}^{(z)}(r,\Gth,z;h,\Gl)=0,
\end{equation}
where the explicit form of $a(\Gl)$ will be given below for a specific
energy satisfying properties (P1)--(P4).

We observe that during buckling the Cauchy-Green strain tensor
$\BC=\BF^{T}\BF$ is close to the
identity. Therefore, considering the energy which is quadratic in
$\BE=(\BC-\BI)/2$ should capture all the effects associated with
buckling. Hence, we assume, for the purposes of exhibiting the explicit form
of the trivial branch, that
\[
W(\BF)=\hf(\SFL_{0}\BE,\BE),\qquad\BE=\hf(\BF^{T}\BF-\BI).
\]
where the elastic tensor $\SFL_{0}$ is
isotropic. Following Koiter \cite{koit45} we consider the trivial branch
$\By(\Bx;h,\Gl)=\Bx+\BU(\Bx;h,\Gl)$ given in cylindrical coordinates by
\begin{equation}
  \label{trbr}
U^{(r)}=a(\Gl)r,\qquad U^{(\Gth)}=0,\qquad U^{(z)}=-b(\Gl)z,
\end{equation}
where the functions $a(\Gl)$ and $b(\Gl)$ will presently be determined. In
cylindrical coordinates we compute
\[
\Grad\BU=\left[
  \begin{array}{ccr}
    a & 0 &0\\
    0 & a& 0\\
    0 & 0& -b
  \end{array}
\right],\quad
\BF=\left[
  \begin{array}{ccc}
    1+a & 0 &0\\
    0 & 1+a& 0\\
    0 & 0& 1-b
  \end{array}
\right],\quad
\BE=\left[
  \begin{array}{ccc}
    a+\frac{a^{2}}{2} & 0 &0\\
    0 & a+\frac{a^{2}}{2}& 0\\
    0 & 0& \frac{b^{2}}{2}-b
  \end{array}
\right]
\]
Then we compute $\BP=\BF(\SFL_{0}\BE)$, and the traction-free condition
$\BP\Be_{r}=\Bzr$ on the lateral boundary is equivalent to the equation
\[
2a+a^{2}=\nu(2b-b^{2}),
\]
where $\nu$ is the Poisson's ratio for $\SFL_{0}$. The loading (\ref{trac})
implies that $(\BP\Be_{z},\Be_{z})=-\Gl$, which translates in the equation
\[
E(1-b)\frac{2\nu(2a+a^{2})+(1-\nu)(b^{2}-2b)}{2(1+\nu)(1-2\nu)}=-\Gl,
\]
where $E$ is the Young's modulus. Thus,
\[
a(\Gl)=\sqrt{1+\nu(2b(\Gl)-b(\Gl)^{2})}-1,
\]
where $b(\Gl)$ is the unique root of $Eb(1-b)(2-b)=2\Gl$, such that
$0<b(\Gl)<1-1/\sqrt{3}$. Such a root exists, whenever $0<\Gl<E/(3\sqrt{3})$.
We now see that the fundamental assumption (\ref{fundass}) is satisfied, since
the trivial branch parameters do not depend on $h$ explicitly. Choosing $\Gl$
as a loading parameter we obtain
\begin{equation}
  \label{perfectstr}
\BGs_{h}=\tns{\Be_{z}}=\left[
  \begin{array}{ccc}
    0 & 0 & 0\\
    0 & 0 & 0\\
    0 & 0 & 1
  \end{array}
\right].
\end{equation}
\begin{remark}
  \label{rem:trbr}
The same trivial branch (\ref{trac}), (\ref{Ubar}), (\ref{trbr}) also satisfies the more
restrictive fixed bottom \bc s (\ref{BCfixed}).
\end{remark}

\section{Buckling load and buckling mode for the perfect cylindrical shell}
\setcounter{equation}{0}
\label{sec:perfect}
Using the linearized stress (\ref{perfectstr}) in the Koiter trivial branch
(\ref{trbr}) we compute
\[
\int_{\CC_{h}}(\BGs_{h},\Grad\Bu^{T}\Grad\Bu)d\Bx=\|u_{r,z}\|^{2}+\|u_{z,z}\|^{2}+\|u_{\Gth,z}\|^{2}.
\]
Therefore, the space $\CA_{h}$ given by (\ref{Adef}) is simply the set of all
functions in $V_{h}$, given by (\ref{Vh}) that are not independent of
$z$-variable. On the one hand the estimates (\ref{rz}) and (\ref{thetaz})
imply that $\mathfrak{K}(h,\Bu)\ge c(L)h$, for any $\Bu\in\CA_{h}$ where
$\mathfrak{K}(h,\Bu)$ is given by (\ref{Kgen}). On the other, the test
function (\ref{ansatz0}) shows that
$\Hat{\Gl}(h)\le C(L)h$, where $\Hat{\Gl}(h)$ is given by (\ref{clin}). Thus,
\begin{equation}
  \label{h1scale}
c(L)h\le\Hat{\Gl}(h)\le C(L)h.
\end{equation}

In order to find the exact asymptotics of the buckling load as well as the
buckling mode we may simplify the functional $\mathfrak{K}(h,\Bu)$ by
observing that $\|u_{r,z}\|^{2}$ is much larger than $\|u_{z,z}\|^{2}$ and
$\|u_{\Gth,z}\|^{2}$, according to the estimates (\ref{rz}) and
(\ref{thetaz}), which are asymptotically saturated by (\ref{ansatz0}).
\begin{lemma}
  \label{lem:K1}
The pair $(\CA_h, \mathfrak{K}_{1}(h,\BGf))$ characterizes buckling, where
\[
\mathfrak{K}_{1}(h,\BGf)=\frac{\int_{\CC_{h}}(\SFL_{0}e(\BGf),e(\BGf))d\Bx}
{\int_{\CC_{h}}|\phi_{r,z}|^{2}d\Bx}.
\]
\end{lemma}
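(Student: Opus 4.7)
The plan is to deduce the lemma from Theorem~\ref{th:crit} combined with the buckling equivalence criterion of Theorem~\ref{th:Bequivalence}. First I would confirm that the pair $(\CA_h,\mathfrak{K}(h,\BGf))$ itself characterizes buckling by checking the hypotheses (\ref{sufcond}). Theorem~\ref{th:KI} gives $K(V_h)\sim h^{3/2}$, while the two-sided bound (\ref{h1scale}) that was just established yields $\Hat{\Gl}(h)\le C(L)h$; hence $\Hat{\Gl}(h)^2/K(V_h)\le C h^{1/2}\to 0$, and both conditions in (\ref{sufcond}) are met.

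Next, I would produce the functional $\mathfrak{K}_1$ as a buckling-equivalent simplification of $\mathfrak{K}$ by computing the difference of reciprocals. Since for the perfect cylinder
\[
\int_{\CC_h}(\BGs_h,\Grad\BGf^T\Grad\BGf)d\Bx=\|\phi_{r,z}\|^2+\|\phi_{z,z}\|^2+\|\phi_{\Gth,z}\|^2,
\]
one finds
\[
\frac{1}{\mathfrak{K}(h,\BGf)}-\frac{1}{\mathfrak{K}_1(h,\BGf)}
=\frac{\|\phi_{z,z}\|^2+\|\phi_{\Gth,z}\|^2}{\mathfrak{S}_h(\BGf)}.
\]
Using (\ref{Lcoerc}) (which gives $\mathfrak{S}_h(\BGf)\ge\alpha_{\SFL_0}\|e(\BGf)\|^2$) together with the Korn-like inequalities (\ref{ththzz}) and (\ref{zthz}) from Theorem~\ref{th:KLI}, this quantity is bounded above by $C(L)/\sqrt{h}$ uniformly in $\BGf\in\CA_h$. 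Multiplying by $\Gl(h)\le C(L)h$ yields
\[
\Gl(h)\sup_{\BGf\in\CA_h}\left|\frac{1}{\mathfrak{K}(h,\BGf)}-\frac{1}{\mathfrak{K}_1(h,\BGf)}\right|
\le C(L)\sqrt{h}\longrightarrow 0,
\]
verifying hypothesis (\ref{J1J2}) of Theorem~\ref{th:Bequivalence} with $\CB_h=\CA_h$.

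Finally, since $(\CA_h,\mathfrak{K}(h,\BGf))$ characterizes buckling and $\mathfrak{K},\mathfrak{K}_1$ are buckling equivalent, Theorem~\ref{th:Bequivalence} lets me conclude that $(\CA_h,\mathfrak{K}_1(h,\BGf))$ also characterizes buckling. The only nontrivial input is the sharp anisotropic scaling of the three gradient-component Korn inequalities in Theorem~\ref{th:KLI}: the estimate on $\|\phi_{z,z}\|$ is trivial, but the $h^{-1/2}$ bound on $\|\phi_{\Gth,z}\|^2$ is the key ingredient that allows the difference $\|\phi_{z,z}\|^2+\|\phi_{\Gth,z}\|^2$ to be absorbed into an $O(1/\sqrt{h})$ multiple of $\mathfrak{S}_h(\BGf)$; without it, the comparison with $\Gl(h)\sim h$ would fail. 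I do not anticipate any real obstacle beyond plugging in the inequalities; all the heavy lifting has been done in Sections~\ref{sec:KI}–\ref{sec:KLI} and in the general Theorems~\ref{th:crit} and~\ref{th:Bequivalence}.
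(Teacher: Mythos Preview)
Your proposal is correct and follows essentially the same route as the paper: compute the difference of reciprocals, bound $\|\phi_{z,z}\|^{2}+\|\phi_{\Gth,z}\|^{2}$ by $C(L)h^{-1/2}\|e(\BGf)\|^{2}$ via coercivity and the Korn-like inequality (\ref{thetaz}), and invoke Theorem~\ref{th:Bequivalence}. The only difference is that you spell out the verification of (\ref{sufcond}) for Theorem~\ref{th:crit} explicitly, whereas the paper leaves this implicit.
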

\begin{proof}
By (\ref{thetaz}), (\ref{coerc}) and (\ref{h1scale}) we have
\[
\left|\frac{1}{\mathfrak{K}(h,\BGf)}-\frac{1}{\mathfrak{K}_1(h,\BGf)}\right|=
\frac{\|\phi_{\Gth,z}\|^{2}+\|\phi_{z,z}\|^{2}}{\int_{\CC_{h}}(\SFL_{0}e(\BGf),e(\BGf))d\Bx}
\leq \frac{\frac{C(L)}{\sqrt{h}}+1}{\alpha_{L_0}}\leq\frac{c}{\sqrt{h}}=o\left(\nth{\Gl(h)}\right).
\]
Therefore by Theorem~\ref{th:Bequivalence} pair
$(\CA_h, \mathfrak{K}_1(h,\BGf))$ characterizes buckling.
\end{proof}
\begin{remark}
  \label{rem:K1}
Remarks~\ref{rem:anzatz0} and \ref{rem:KLI} imply that (\ref{h1scale}) and
hence Lemma~\ref{lem:K1} are valid for the fixed bottom \bc s (\ref{BCfixed}).
\end{remark}

\subsection{Bounds on the optimal wave numbers}

When $\SFL_{0}$ is isotropic the minimization of $\mathfrak{K}_1(h,\BGf)$ can
be done in Fourier space.
For any function $\Bf(r)=(f_{r}(r),f_{\Gth}(r),f_{\Gth}(r))$ and any $m\ge 0$
and $n\in\bb{Z}$ let
\[
\BGF_{m,n}(\Bf)=\left(f_{r}(r)\sin\left(\dfrac{\pi mz}{L}\right),
f_{\Gth}(r)\sin\left(\dfrac{\pi mz}{L}\right),f_{z}(r)\cos\left(\dfrac{\pi mz}{L}\right)
\right)e^{in\Gth}.
\]
For any $m\geq 0$ and $n\ge 0$ set
\begin{equation}
  \label{Xmndef}
 X(m,n)=
 \begin{cases}
\{\re(\BGF_{m,n}(\Bf)):\Bf\in C^{1}(I_{h};\bb{C}^{3})\},&n\ge 1\\
\{\BGF_{m,n}(\Bf):\Bf\in C^{1}(I_{h};\bb{R}^{3}),\int_{I_h}f_{z}(r)dr=0,\},&n=0.
 \end{cases}
\end{equation}
Observe that $X(m,n)\subset\CA_{h}$ for any integers $m\geq 1$ and
$n\ge 0$, since $\Bu\in X(m,n)$ is independent of $z$ \IFF $m=0$. Let
\begin{equation}
  \label{hatmn}
\Hat{\Gl}_{1}(h)=\inf_{\BGf\in\CA_{h}}\mathfrak{K}_1(h,\BGf),\qquad
\Hat{\Gl}(h;m,n)=\inf_{\BGf\in X(m,n)}\mathfrak{K}_1(h,\BGf).
\end{equation}
\begin{theorem}
  \label{th:mn}~
\begin{itemize}
\item[(i)] Let $\Hat{\Gl}_{1}(h)$ and $\Hat{\Gl}(h;m,n)$ be given by (\ref{hatmn}). Then
\begin{equation}
  \label{mninf}
\Hat{\Gl}_{1}(h)=\inf_{m\ge 1\atop n\ge 0}\Hat{\Gl}(h;m,n).
\end{equation}
The infimum in (\ref{mninf}) is attained at $m=m(h)$ and $n=n(h)$ satisfying
\begin{equation}
  \label{mnbound}
  m(h)\le\frac{C(L)}{\sqrt{h}},\qquad \frac{n(h)^{2}}{m(h)}\le\frac{C(L)}{\sqrt{h}}
\end{equation}
for some constant $C(L)$ depending only on $L$.
\item[(ii)] Suppose $m(h)$ and $n(h)$ are as in part (i). Then the pair
  $(X(m(h),n(h)),\mathfrak{K}_1(h,\BGf))$ characterizes buckling in the sense of Definition~\ref{def:equiv}.
\end{itemize}
\end{theorem}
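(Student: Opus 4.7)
The plan is to use Fourier decomposition in $\theta$ and $z$ to reduce the infimum defining $\Hat{\Gl}_1(h)$ to a double infimum over wavenumbers $(m,n)$. For part (i), I would expand any $\BGf\in\CA_{h}$ as $\BGf=\sum_{m\ge 0,n\in\bb{Z}}\BGF_{m,n}(\Bf^{(m,n)})$, using the same Fourier extension employed in the proof of Lemma~\ref{lem:KLI}. A direct computation of the cylindrical strain components reveals that $e_{rr},e_{\theta\theta},e_{zz},e_{r\theta}$ all carry a $\sin(\pi m z/L)$ factor, while $e_{rz}$ and $e_{\theta z}$ carry $\cos(\pi m z/L)$. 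Since $\SFL_{0}$ is isotropic, the quadratic form $(\SFL_{0}e,e)$ is a linear combination of $(\mathrm{tr}\,e)^{2}$ and $|e|^{2}$, neither of which couples a ``sin-type'' strain component with a ``cos-type'' one (the trace being purely sin-type). Combined with the orthogonality of $e^{in\theta}$ on $\bb{T}$ and of the sine and cosine systems on $[0,L]$, this produces
\[
\mathfrak{S}_{h}(\BGf)=\sum_{m,n}\mathfrak{S}_{h}\bigl(\BGF_{m,n}(\Bf^{(m,n)})\bigr),\qquad
\int_{\CC_{h}}|\phi_{r,z}|^{2}d\Bx=\sum_{m,n}\|(\phi_{r,z})^{(m,n)}\|^{2}.
\]
Both sums are nonnegative and the denominator summand is strictly positive precisely when $m\ge 1$, so the elementary inequality $(\sum A_{i})/(\sum B_{i})\ge\inf_{i}A_{i}/B_{i}$ yields (\ref{mninf}).

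For the wavenumber bounds (\ref{mnbound}), I would combine the upper bound $\Hat{\Gl}_{1}(h)\le C(L)h$ already established in (\ref{h1scale}) with a matching lower bound on $\Hat{\Gl}(h;m,n)$ valid for large $m$ or large $n^{2}/m$. Fixing a single mode $\Bv=\BGF_{m,n}(\Bf)\in X(m,n)$, one has $\|v_{r,z}\|^{2}\sim (\pi m/L)^{2}\cdot h\cdot\|f_{r}\|_{L^{2}(I_{h})}^{2}$, while $\|e(\Bv)\|^{2}$ is an explicit positive-definite quadratic form in $(f_{r},f_{\theta},f_{z})$ and their radial derivatives, with coefficients depending polynomially on $m$ and $n$. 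Minimizing out $f_{\theta}$ and $f_{z}$ (they appear with positive leading coefficients through $e_{\theta\theta}$, $e_{\theta z}$, and $e_{rz}$) reduces $\|e(\Bv)\|^{2}$ to an effective 1D energy in $f_{r}$; requiring the resulting Rayleigh quotient in $f_{r}$ to remain $O(h)$ then forces both $m\le C(L)/\sqrt{h}$ and $n^{2}/m\le C(L)/\sqrt{h}$.

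For part (ii), by part (i) the infimum of $\mathfrak{K}_{1}$ over $\CA_{h}$ is attained (modulo standard 1D compactness of the Rayleigh quotient in $f_{r}$) on the single Fourier mode $X(m(h),n(h))$. Denoting the minimizer by $\BGf_{h}^{\star}\in X(m(h),n(h))\subset\CA_{h}$, we have $\mathfrak{K}_{1}(h,\BGf_{h}^{\star})=\Hat{\Gl}_{1}(h)$. Since $(\CA_{h},\mathfrak{K}_{1})$ characterizes buckling by Lemma~\ref{lem:K1}, part (c) of Definition~\ref{def:equiv} forces $\BGf_{h}^{\star}$ to be a buckling mode. Applying Lemma~\ref{lem:pairB_hJ} with $\CB_{h}=\CA_{h}$ and $\CC_{h}=X(m(h),n(h))$ then delivers that $(X(m(h),n(h)),\mathfrak{K}_{1})$ characterizes buckling.

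The hard part is the explicit elimination of $f_{\theta}$ and $f_{z}$ from the strain quadratic form at a single Fourier mode and the identification of the leading ``membrane'' and ``bending'' contributions; this is a 3D analogue of Koiter's classical shell-theoretic reduction. Tracking the interplay between membrane terms (dominant when $m$ is large relative to $n^{2}$) and bending terms (dominant when $n^{2}/m$ is large) sharply enough to produce (\ref{mnbound}) is where the technical work lies.
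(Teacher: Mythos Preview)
Your arguments for \eqref{mninf} and for part~(ii) match the paper's: Fourier diagonalization of the isotropic quadratic form gives the decomposition, and then Lemma~\ref{lem:pairB_hJ} applied with a (near-)minimizer in $X(m(h),n(h))$ yields the characterization of buckling. The paper does not invoke 1D compactness to produce an exact minimizer; it simply takes $\BGy_{h}\in X(m(h),n(h))$ with $\mathfrak{K}_{1}(h,\BGy_{h})\le\Hat{\Gl}_{1}(h)+\Hat{\Gl}_{1}(h)^{2}$, which is enough.

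The substantive divergence is in how you propose to obtain the wave-number bounds \eqref{mnbound}. You outline an explicit elimination of $f_{\theta}$ and $f_{z}$ from the strain quadratic form, leaving a 1D Rayleigh quotient in $f_{r}$ whose $O(h)$ size would force the bounds. This is in spirit the Koiter reduction, but at this stage $f_{r},f_{\theta},f_{z}$ are still \emph{functions of $r\in I_{h}$}, coupled through $r$-derivatives in $e_{r\theta}$ and $e_{rz}$; the ``minimization'' is an ODE problem, not the algebraic one you describe, and you have not indicated how to control it. The paper sidesteps this entirely: starting from a near-minimizer $\BGf^{h}\in X(m(h),n(h))$ with $\|e(\BGf^{h})\|^{2}\le C(L)h\,m(h)^{2}\|\phi^{h}_{r}\|^{2}$, it feeds $\BGf^{h}$ into the already-proved Korn-type inequality \eqref{preK} of Lemma~\ref{lem:KTI} to get $m(h)\sqrt{h}\le C(L)$, and then combines $n(h)^{2}\|\phi^{h}_{r}\|^{2}=\|\phi^{h}_{r,\theta}\|^{2}$ with Poincar\'e and \eqref{preK} again to bound $n(h)^{2}/m(h)$. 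The explicit elimination you have in mind is indeed carried out in the paper, but only \emph{after} the linearization-in-$r$ step of Section~\ref{sub:alg}, which collapses the $r$-dependence and makes the problem genuinely algebraic. Using \eqref{preK} here is both shorter and avoids duplicating that later work.
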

\begin{proof}
Let us first prove (\ref{mninf}). Let
\[
\Ga(h)=\inf_{m\ge 1\atop n\ge 0}\Hat{\Gl}(h;m,n).
\]
It is clear that
$\Hat{\Gl}(h;m,n)\ge\Hat{\Gl}_{1}(h)$ for any $m\ge 1$ and $n\ge 0$, since
$X(m,n)\subset\CA_{h}$. Therefore, $\Ga(h)\ge\Hat{\Gl}_{1}(h)$. By definition
of $\Ga(h)$ we have
\begin{equation}
  \label{alphamn}
  \int_{\CC_{h}}(\SFL_{0}e(\BGf),e(\BGf))d\Bx\ge\Ga(h)\|\phi_{r,z}\|^{2}
\end{equation}
for any $\BGf\in X(m,n)$ and any $m\ge 1$ and $n\ge 0$.
Any $\BGf\in\CA_{h}$ can be expanded in the Fourier series in $\Gth$ and $z$
\[
\BGf(r,\Gth,z)=\sum_{m=0}^{\infty}\sum_{n=0}^{\infty}\BGf_{m,n}(r,\Gth,z),
\]
where $\BGf_{m,n}(r,\Gth,z)\in X(m,n)$. If $\SFL_{0}$ is isotropic elastic
tensor, or even more generally has the form
\[
(\SFL_{0}e,e)=Q_{1}(e_{rr},e_{r\Gth},e_{\Gth\Gth},e_{zz})+Q_{2}(e_{rz},e_{\Gth z}),
\]
where $Q_{1}(q_{1},q_{2},q_{3},q_{4})$ and $Q_{2}(q_{1},q_{2})$ are arbitrary
quadratic forms in their arguments, then the quadratic form $(\SFL_{0}e,e)$
diagonalizes in Fourier space, i.e.
\[
\int_{\CC_{h}}(\SFL_{0}e(\BGf),e(\BGf))d\Bx=\sum_{m=0}^{\infty}\sum_{n=0}^{\infty}
\int_{\CC_{h}}(\SFL_{0}e(\BGf_{m,n}),e(\BGf_{m,n}))d\Bx.
\]
We also have
\[
\|\phi_{r,z}\|^{2}=\sum_{m=0}^{\infty}\sum_{n=0}^{\infty}\|\phi_{r,z}^{(m,n)}\|^{2}.
\]
Therefore, (\ref{alphamn}) implies that
\[
\int_{\CC_{h}}(\SFL_{0}e(\BGf_{m,n}),e(\BGf_{m,n}))d\Bx\ge\Ga(h)\|\phi_{r,z}^{(m,n)}\|^{2}
\]
for every $m\ge 1$ and $n\ge 0$. Summing up, we obtain that
\[
\int_{\CC_{h}}(\SFL_{0}e(\BGf),e(\BGf))d\Bx\ge\Ga(h)\|\phi_{r,z}\|^{2}
\]
for every $\BGf\in\CA_{h}$. It follows that $\Hat{\Gl}_{1}(h)\ge\Ga(h)$ and
equality is proved.

Next we prove (\ref{mnbound}). We observe that, according to
Lemma~\ref{lem:K1}
\[
c(L)h\le\Hat{\Gl}_{1}(h)\le C(L)h.
\]
Then $\Hat{\Gl}(h;m,n)\ge\Hat{\Gl}_{1}(h)\ge c(L)h$ for any $m$ and $n.$ By
definition of the infimum, there exist indexes $m(h)$ and $n(h)$ such that
$\Hat{\Gl}(h;m(h),n(h))\le 2C(L)h$. By definition of the infimum there exists
$\BGf^{h}\in X(m(h),n(h))$ such that $\mathfrak{K}_1(h,\BGf^{h})\le 3C(L)h$.
Hence, there exists a possibly different constant $C(L)$ (not relabeled), such that
\begin{equation}
  \label{optrate}
\|e(\BGf^{h})\|^{2}\le C(L)h\|\phi^{h}_{r,z}\|^{2}=C(L)m(h)^{2}h\|\phi^{h}_{r}\|^{2}.
\end{equation}
To prove the first estimate in (\ref{mnbound}) we apply the inequality
(\ref{preK}) to $\BGf^{h}$ and estimate $\|e(\BGf^{h})\|$
via (\ref{optrate}).
\[
\frac{m(h)^{2}\pi^{2}}{L^{2}}\|\phi^{h}_{r}\|^{2}=\|\phi^{h}_{r,z}\|^{2}\le\|\Grad\BGf^{h}\|^{2}\le
C(L)\left(m(h)^{2}h+\frac{m(h)}{\sqrt{h}}\right)\|\phi^{h}_{r}\|^{2}.
\]
Hence
\[
h+\nth{m(h)\sqrt{h}}\ge c(L)
\]
for some constant $c(L)>0$, independent of $h$. Hence, the quantity
$m(h)\sqrt{h}$ must stay bounded, as $h\to 0$. To estimate $n(h)$ we write
\[
n(h)^{2}\|\phi^{h}_{r}\|^{2}=\|\phi^{h}_{r,\Gth}\|^{2}\le
C_{0}(\|(\Grad\BGf^{h})_{r\Gth}\|^{2}+\|\phi^{h}_{\Gth}\|^{2}).
\]
By the Poincar\'e inequality
\[
\|\phi^{h}_{\Gth}\|^{2}\le\frac{L^{2}}{\pi^{2}}\|\phi^{h}_{\Gth,z}\|^{2}\le
\frac{L^{2}}{\pi^{2}}\|(\Grad\BGf^{h})_{\Gth z}\|^{2},
\]
and hence
$
n(h)^{2}\|\phi^{h}_{r}\|^{2}\le C(L)\|(\Grad\BGf^{h})\|^{2}.
$
Applying (\ref{preK}) and estimating $\|e(\BGf^{h})\|$ via (\ref{optrate}) we
obtain
\[
n(h)^{2}\le C(L)\left(hm(h)^{2}+\frac{m(h)}{\sqrt{h}}\right),
\]
from which (\ref{mnbound})$_{2}$ follows via (\ref{mnbound})$_{1}$. The
boundedness of $m(h)$ and $n(h)$ implies that the minimum in (\ref{mninf}) is
attained. Part (i) is proved now.

To prove part (ii) it is sufficient to show, due to Lemma~\ref{lem:pairB_hJ},
that $X(m(h),n(h))$ contains a buckling mode. By definition of the infimum
in (\ref{hatmn}), for each $h\in(0,h_{0})$ there exists $\BGy_{h}\in
X(m(h),n(h))\subset\CA_{h}$ such that
\[
\Hat{\Gl}_{1}(h)=\Hat{\Gl}(h;m(h),n(h))\le\mathfrak{K}_1(h,\BGy_{h})\le\Hat{\Gl}_{1}(h)
+(\Hat{\Gl}_{1}(h))^{2}.
\]
Therefore,
\[
\lim_{h\to 0}\frac{\mathfrak{K}_1(h,\BGy_{h})}{\Hat{\Gl}_{1}(h)}=1.
\]
Hence, $\BGy_{h}\in X(m(h),n(h))$ is a buckling mode follows, since the pair
$(\CA_{h},\mathfrak{K}_1(h,\BGf))$ characterizes buckling.
\end{proof}

\subsection{Linearization in $r$}
In this section we prove that the buckling load and the buckling mode can be captured
by the test functions depending linearly on $r$. In fact we specify an
explicit structure that buckling modes should possess. We start by defining the
``linearization'' operator
\[
\CL(\Bu)=(v_{r}(\theta,z),ru_{\Gth}(1,\theta,z)-(r-1)v_{r,\Gth}(\theta,z),u_{z}(1,\theta,z)-(r-1)v_{r,z}(\theta,z)),
\]
where
\[
v_{r}(\theta,z)=\Av{I_{h}}u_{r}(r,\theta,z)dr.
\]
Define the space of vector fields
\begin{multline}
  \label{Xlin}
  X_{\rm lin}=\{(f(\Gth,z),rg(\Gth,z)-(r-1)f_{,\Gth}(\Gth,z),h(\Gth,z)-(r-1)f_{,z}(\Gth,z)):\\
(f,g,h)\in H^{1}(\bb{T}\times[0,L];\bb{R}^{3}),\ f(\Gth,0)=g(\Gth,0)=f(\Gth,L)=g(\Gth,L)=0, \int_{0}^{2\pi}h(\Gth,0)d\Gth=0\}.
\end{multline}
Incidentally, the test functions (\ref{ansatz0}) belong to $X_{\rm lin}$. It is also clear
that  $ X_{\rm lin}\subset V_h.$ We
also observe that if $\Bu\in X(m,n)$, then $\CL(\Bu)\in X(m,n)$. Let us show
that if $\BGy_{h}\in X(m(h),n(h))$ is a buckling mode then so is
$\CL(\BGy_{h})$.
  \begin{theorem}
  \label{th:lin}
Suppose $m(h)\ge 1$ and $n(h)\ge 0$ are as in part (i) of
Theorem~\ref{th:mn}. Let $\BGy_{h}\in X(m(h),n(h))$ be a buckling mode. Then
\begin{equation}
  \label{linu}
\mathfrak{K}_{1}(h,\CL(\BGy_{h}))\le\mathfrak{K}_{1}(h,\BGy_{h})(1+C(L)\sqrt h).
\end{equation}
\end{theorem}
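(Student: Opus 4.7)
The plan is to bound the numerator and denominator of $\mathfrak{K}_1$ separately, using the Fourier structure of $\BGy_h \in X(m(h),n(h))$ together with the bounds $m(h)^2 \le C(L)/h$ and $n(h)^2/m(h) \le C(L)/\sqrt h$ from Theorem~\ref{th:mn}(i), and the buckling-mode asymptotic $\|e(\BGy_h)\|^2 \le C(L)h\|(\BGy_h)_{r,z}\|^2$ coming from Lemma~\ref{lem:K1} and $\Hat{\Gl}_1(h) = O(h)$.

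For the denominator, since $(\CL(\BGy_h))_{r,z} = v_{r,z}$ is constant in $r$ with $v_r$ the $r$-average of $(\BGy_h)_r$, the Pythagorean identity $\|(\BGy_h)_{r,z}\|^2 - \|(\CL(\BGy_h))_{r,z}\|^2 = \|(\BGy_h)_{r,z}-v_{r,z}\|^2$ combined with Poincar\'e on $I_h$, the Fourier identity $\|(\BGy_h)_{r,zr}\| = (\pi m(h)/L)\|e_{rr}(\BGy_h)\|$, and the above bounds yields an $O(h^2)$ relative error, so the denominator is preserved up to a factor $1-O(h^2)$.

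For the numerator, the key structural fact is that the radial strains $e_{rr}$, $e_{r\theta}$, $e_{rz}$ of $\CL(\BGy_h)$ vanish identically, so only the three ``parallel'' strains $e_{\theta\theta}, e_{zz}, e_{\theta z}$ need attention. I will establish $\|e_j(\BGy_h) - e_j(\CL(\BGy_h))\|^2 \le C(L)h\|e(\BGy_h)\|^2$ for $j \in \{\theta\theta, zz, \theta z\}$. For $e_{zz}$, substituting $\phi_{z,r} = 2e_{rz}-\phi_{r,z}$ and integrating from $1$ to $r$ gives
\[
e_{zz}(\BGy_h) - e_{zz}(\CL(\BGy_h)) = 2\int_1^r e_{rz,z}(s)\,ds - \int_1^r\bigl(\phi_{r,zz}(s) - v_{r,zz}\bigr)\,ds,
\]
which after Cauchy-Schwarz in $s$, Poincar\'e on $I_h$ (for the second integral), the Fourier identities $\|e_{rz,z}\| = (\pi m(h)/L)\|e_{rz}\|$ and $\|\phi_{r,zzr}\| = (\pi m(h)/L)^2\|e_{rr}\|$, and $m(h)^2 \le C(L)/h$ is bounded by $C(L)\sqrt h\,\|e(\BGy_h)\|$ in $L^2$. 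The analogous substitution $\phi_{\theta,r} = 2e_{r\theta}+\phi_\theta/r-\phi_{r,\theta}/r$ handles $e_{\theta\theta}$ and $e_{\theta z}$, with the angular-derivative factors of $n(h)$ absorbed via $n(h)^2/m(h) \le C(L)/\sqrt h$.

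The main obstacle is passing from these $L^2$ parallel-strain bounds to the $\SFL_0$-weighted bilinear form appearing in the numerator of $\mathfrak{K}_1$. Writing $D = e(\BGy_h)-e(\CL(\BGy_h))$ and expanding
\[
(\SFL_0 e(\CL\BGy_h),e(\CL\BGy_h)) - (\SFL_0 e(\BGy_h),e(\BGy_h)) = -2(\SFL_0 e(\BGy_h),D) + (\SFL_0 D,D),
\]
the difficulty is that $D$ contains the non-small radial components $e_{rr}(\BGy_h), e_{r\theta}(\BGy_h), e_{rz}(\BGy_h)$. For the isotropic form $(\SFL_0 E,E) = 2\mu|E|^2 + \lambda(\Trc E)^2$, completing the square reduces the leading obstruction to
\[
-\frac{\|\sigma_{rr}(\BGy_h)\|^2}{2\mu+\lambda} + \frac{\lambda^2}{2\mu+\lambda}\|e_{\theta\theta}(\BGy_h)+e_{zz}(\BGy_h)\|^2 - 4\mu\bigl(\|e_{r\theta}(\BGy_h)\|^2 + \|e_{rz}(\BGy_h)\|^2\bigr),
\]
where $\sigma_{rr}(\BGy_h) = (\SFL_0 e(\BGy_h))_{rr}$ is the radial normal linear stress. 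For the Koiter ansatz (\ref{ansatz0}) this combination vanishes identically (since $e_{rr}=e_{r\theta}=e_{rz}=0$ and $\sigma_{rr}=\lambda(e_{\theta\theta}+e_{zz})$). For a general buckling mode I expect this combination to be $O(\sqrt h)(\SFL_0 e(\BGy_h),e(\BGy_h))$ by invoking the near-Euler-Lagrange conditions for the asymptotic minimizer $\BGy_h$ of $\mathfrak{K}_1$ within $X(m(h),n(h))$, which produce a Poisson-type relation between $e_{rr}$ and $e_{\theta\theta}+e_{zz}$ together with smallness of the transverse shear components $e_{r\theta}, e_{rz}$. Rigorously closing this cancellation is the delicate part of the argument; combined with the denominator bound it yields the stated inequality.
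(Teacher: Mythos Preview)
Your treatment of the denominator (via the Pythagorean identity and Poincar\'e on $I_h$) and of the parallel strain components $e_{\theta\theta}$, $e_{zz}$, $e_{\theta z}$ is correct and matches the paper's strategy; the paper performs the linearization in three sequential passes (first $u_r$, then $u_\theta$, then $u_z$), but the componentwise estimates it uses are essentially the ones you write down.

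The genuine gap is exactly where you place it. The displayed obstruction
\[
-\frac{\|\Gs_{rr}(\BGy_h)\|^2}{2\mu+\lambda} + \frac{\lambda^2}{2\mu+\lambda}\|e_{\theta\theta}(\BGy_h)+e_{zz}(\BGy_h)\|^2 - 4\mu\bigl(\|e_{r\theta}(\BGy_h)\|^2 + \|e_{rz}(\BGy_h)\|^2\bigr)
\]
is of order $\|e(\BGy_h)\|^2$ for a general $\BGy_h\in X(m(h),n(h))$, and your appeal to ``near-Euler--Lagrange conditions'' yielding an approximate plane-stress relation $\Gs_{rr}\approx 0$ and smallness of $e_{r\theta},e_{rz}$ is not made rigorous. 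It should be said that the paper's own argument does not resolve this either: its first-step inequality $\|e(\Bu_1)-e(\Bu)\|\le 2(\|v_{r,\theta}-u_{r,\theta}\|+\|v_{r,z}-u_{r,z}\|+\|v_r-u_r\|)$ omits the component $(e(\Bu_1)-e(\Bu))_{rr}=-u_{r,r}$, which is \emph{not} bounded by the right-hand side, so the passage to $\int_{\CC_h}(\SFL_0 e(\Bu_1),e(\Bu_1))\,d\Bx\le(1+C\sqrt h)\int_{\CC_h}(\SFL_0 e(\Bu),e(\Bu))\,d\Bx$ does not follow as written. You have therefore identified a lacuna present in both arguments. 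Closing it requires either carrying out the approximate stationarity computation you sketch (testing the $r$-variation of $\phi_r$ in $\mathfrak{K}_1$ to force $\Gs_{rr}(\BGy_h)=O(\sqrt h)\|e(\BGy_h)\|$ in $L^2$), or else reorganizing the comparison so that the minimization over the $r$-profile absorbs the obstruction before one linearizes.
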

\begin{proof}
We will perform linearization in $r$ sequentially. First in $u_{r}$, then in
$u_{\Gth}$ and finally in $u_{z}$. For this purpose we introduce the following
operators of ``partial linearization''
\[
\Bu_{1}=\CL_{r}(\Bu)=(v_{r}(\theta,z),u_{\Gth}(r,\theta,z),u_{z}(r,\theta,z)),
\]
\[
\Bu_{2}=\CL_{r,\Gth}(\Bu)=(v_{r}(\theta,z),ru_{\Gth}(1,\theta,z)-(r-1)v_{r,\Gth}(\theta,z),u_{z}(r,\theta,z)),
\]
where
\[
v_{r}(\theta,z)=\Av{I_{h}}u_{r}(r,\theta,z)dr.
\]
For any $\Bu\in\CA_{h}$ we have
\begin{equation}
\label{e(A1)e(A)}
\|e(\Bu_{1})-e(\Bu)\|\le 2(\|v_{r,\theta}-u_{r,\theta}\|+\|v_{r,z}-u_{r,z}\|+\|v_{r}-u_{r}\|).
\end{equation}
By the Poincar\'e inequality we have
\begin{equation}
\label{urlinur}
\|v_{r}-u_{r}\|^2\leq C_{0}h^2\|u_{r,r}\|^2\leq C_{0}h^2 \|e(\Bu)\|^2.
\end{equation}
Now let $\Bu\in X(m(h),n(h))$, where $m(h)$ and $n(h)$ satisfy (\ref{mnbound}). Then,
\begin{equation}
  \label{uvest}
\|v_{r,\theta}-u_{r,\theta}\|=n(h)\|v_{r}-u_{r}\|,\qquad
\|v_{r,z}-u_{r,z}\|=\frac{m(h)\pi}{L}\|v_{r}-u_{r}\|.
\end{equation}
Substituting this and (\ref{urlinur}) into (\ref{e(A1)e(A)}), we get
\[
\|e(\Bu_{1})-e(\Bu)\|\le C_{0}h\left(1+n(h)+\frac{m(h)\pi}{L}\right)\|e(\Bu)\|.
\]
Taking into account (\ref{mnbound}) we obtain
\[
\|e(\Bu_{1})-e(\Bu)\|\le C(L)\sqrt{h}\|e(\Bu)\|.
\]
Therefore
\begin{equation}
\label{e(u1)e(u)}
\int_{\CC_{h}}(\SFL_{0}e(\Bu_1),e(\Bu_1))d\Bx\leq(1+C(L)\sqrt{h})\int_{\CC_{h}}(\SFL_{0}e(\Bu),e(\Bu))d\Bx.
\end{equation}

We now make the next step in the linearization in $r$ and consider
$\Bu_2=\CL_{r,\Gth}(\Bu)$. Observe that $e(\Bu_{2})_{r\Gth}=0$. We also see
that
\begin{equation}
  \label{rrz}
e(\Bu_{2})_{rr}=e(\Bu_{1})_{rr},\quad e(\Bu_{2})_{zr}=e(\Bu_{1})_{zr},\quad
e(\Bu_{2})_{zz}=e(\Bu_{1})_{zz}.
\end{equation}
The remaining components are estimated as follows
\begin{equation}
  \label{ththz}
\|e(\Bu_{2})_{\Gth\Gth}-e(\Bu_{1})_{\Gth\Gth}\|\le2\|u_{\theta,\theta}^{(2)}-u_{\theta,\theta}\|,\qquad
\|e(\Bu_{2})_{\Gth z}-e(\Bu_{1})_{\Gth z}\|\le\|u_{\theta,z}^{(2)}-u_{\theta,z}\|.
\end{equation}
Therefore,
 \begin{equation}
 \label{e(A2)e(A)}
 \|e(\Bu_2)\|^{2}\le \|e(\Bu_{1})\|^{2}+
C_{0}(\|u_{\theta,\theta}^{(2)}-u_{\theta,\theta}\|^{2}+\|u_{\theta,z}^{(2)}-u_{\theta,z}\|^{2}).
 \end{equation}
We can estimate
\begin{equation}
  \label{step2th}
\|u_{\Gth,\Gth}^{(2)}-u_{\Gth,\Gth}\|^{2}=n(h)^{2}\|u_{\Gth}^{(2)}-u_{\Gth}\|^{2}\le
\frac{C(L)}{h}\|u_{\Gth}^{(2)}-u_{\Gth}\|^{2},
\end{equation}
due to (\ref{mnbound}). Similarly,
\begin{equation}
  \label{step2z}
\|u_{\Gth,z}^{(2)}-u_{\Gth,z}\|^{2}=\frac{\pi^{2} m(h)^{2}}{L^{2}}\|u_{\Gth}^{(2)}-u_{\Gth}\|^{2}\le
\frac{C(L)}{h}\|u_{\Gth}^{(2)}-u_{\Gth}\|^{2},
\end{equation}
We now proceed to estimate $\|u_{\Gth}^{(2)}-u_{\Gth}\|$. Let
\[
w(r,\Gth,z)=u_{\Gth,r}+v_{r,\Gth}-u_{\Gth}=2e(\Bu_{1})_{r\Gth}+\frac{1-r}{r}(v_{r,\Gth}-u_{\Gth}).
\]
Therefore,
\[
\|w\|^{2}\le8\|e(\Bu_{1})\|^{2}+h^{2}\|\frac{v_{r,\Gth}-u_{\Gth}}{r}\|^{2}\le
8\|e(\Bu_{1})\|^{2}+C(L)\sqrt{h}\|e(\Bu_{1})\|^{2}.
\]
due to the Korn inequality (\ref{KI}). Thus, $\|w\|\le C(L)\|e(\Bu_{1})\|$.
We can express $u_{\Gth}^{(2)}-u_{\Gth}$ in terms of $w$ as follows
\[
u_{\Gth}-u_{\Gth}^{(2)}=\int_1^rw(t,\Gth,z)dt+\int_1^r(u_\Gth(t,\Gth,z)-u_\Gth(1,\Gth,z))dt.
\]
Using the Cauchy-Schwartz inequality we have
\[
\int_{I_{h}}\left(\int_{1}^{r}f(t)dt\right)^{2}dr\le\frac{h^{2}}{4}\int_{I_{h}}f(r)^{2}dr.
\]
Therefore,
\[
\|u_{\Gth}-u_{\Gth}^{(2)}\|^{2}\le \frac{h^{2}}{2}(\|w\|^{2}+\|u_{\Gth}-u_\Gth(1,\Gth,z)\|^{2}).
\]
By the Poincar\'e inequality followed by the application of the Korn
inequality (\ref{KI}) we obtain
\[
\|u_{\Gth}-u_\Gth(1,\Gth,z)\|^{2}\le h^{2}\|u_{\Gth,r}\|^{2}\le C(L)\sqrt{h}\|e(\Bu_{1})\|^{2}.
\]
Therefore, we conclude that
\[
\|u_{\Gth}-u_{\Gth}^{(2)}\|^{2}\le C(L)h^{2}\|e(\Bu_{1})\|^{2}.
\]
Hence, (\ref{e(A2)e(A)}) becomes
\begin{equation}
  \label{u1u2}
\|e(\Bu_2)\|^{2}\le \|e(\Bu_{1})\|^{2}(1+C(L)h).
\end{equation}
Recalling (\ref{rrz}) and (\ref{ththz}) we get
\[
\|\Trc(e(\Bu_{2}))-\Trc(e(\Bu_{1}))\|\le C(L)\sqrt{h}\|e(\Bu_{1})\|.
\]
Therefore,
\begin{equation}
  \label{Tru1u2}
\|\Trc(e(\Bu_{2}))\|^{2}\le\|\Trc(e(\Bu_{1}))\|^{2}+C(L)h\|e(\Bu_{1})\|^{2}.
\end{equation}
When $\SFL_{0}$ is isotropic we have
\begin{equation}
\label{L0(e),e}
\int_{\CC_{h}}(\SFL_{0}e(\Bu_{2}),e(\Bu_{2}))d\Bx=\lambda\|\Trc(e(\Bu_{2}))\|^2+2\mu\|e(\Bu_{2})\|^2,
\end{equation}
where $\lambda$ and $\mu$ are the Lam\'e constants. The inequalities
(\ref{u1u2}) and (\ref{Tru1u2}) imply, using the coercivity of $\SFL_{0}$,
\begin{equation}
  \label{step2}
\int_{\CC_{h}}(\SFL_{0}e(\Bu_{2}),e(\Bu_{2}))d\Bx\le(1+C(L)h)
\int_{\CC_{h}}(\SFL_{0}e(\Bu_{1}),e(\Bu_{1}))d\Bx.
\end{equation}

In the last step of linearization we let $\Bv=\CL(\Bu)$. We compute $e(\Bv)_{rz}=0$ and
\[
  e(\Bv)_{rr}=e(\Bu_{2})_{rr},\quad e(\Bv)_{r\Gth}=e(\Bu_{2})_{r\Gth},\quad
e(\Bv)_{\Gth\Gth}=e(\Bu_{2})_{\Gth\Gth}.
\]
We also have
\begin{equation}
  \label{zztherr}
  \|e(\Bv)_{\Gth z}-e(\Bu_{2})_{\Gth z}\|\le 2\|v_{z,\Gth}-u_{z,\Gth}\|,\qquad
\|e(\Bv)_{zz}-e(\Bu_{2})_{zz}\|\le \|v_{z,z}-u_{z,z}\|.
\end{equation}
Analogously to (\ref{step2th}) and (\ref{step2z}) we have
\begin{equation}
  \label{step3thz}
  \|e(\Bv)_{\Gth z}-e(\Bu_{2})_{\Gth z}\|^{2}\le\frac{C(L)}{h}\|v_{z}-u_{z}\|^{2},\qquad
\|e(\Bv)_{zz}-e(\Bu_{2})_{zz}\|^{2}\le\frac{C(L)}{h}\|v_{z}-u_{z}\|^{2}.
\end{equation}

Integrating the equality $u_{z,r}=2e(\Bu_{2})_{rz}-v_{r,z}$ from 1 to $r$ we get
\[
u_z(r,\Gth,z)-v_{z}=2\int_0^r e(\Bu_{2})_{rz}(t,\Gth,z)dt.
\]
Thus, $\|v_{z}-u_z\|^2\leq h^2\|e(\Bu_{2})\|^2.$ Applying this estimate to
(\ref{step3thz}) we obtain
\[
\|e(\Bv)_{\Gth z}-e(\Bu_{2})_{\Gth z}\|^{2}\le C(L)h\|e(\Bu_{2})\|^{2},\qquad
\|e(\Bv)_{zz}-e(\Bu_{2})_{zz}\|^{2}\le C(L)h\|e(\Bu_{2})\|^{2}.
\]
We conclude that
\[
\|e(\Bv)\|^{2}\le \|e(\Bu_{2})\|^{2}(1+C(L)h),\qquad
\|\Trc(e(\Bv))\|^{2}\le\|\Trc(e(\Bu_{2}))\|^{2}+C(L)h\|e(\Bu_{2})\|^{2},
\]
and hence for the isotropic and coercive elastic tensor $\SFL_{0}$ we have
\begin{equation}
  \label{step3}
\int_{\CC_{h}}(\SFL_{0}e(\Bv),e(\Bv))d\Bx\leq
(1+C(L) h)\int_{\CC_{h}}(\SFL_{0}e(\Bu_{2}),e(\Bu_{2}))d\Bx.
\end{equation}

Finally to prove (\ref{linu}) we need to relate $\|u_{r,z}\|$ and
$\|v_{r,z}\|$.
We estimate $\|v_{r,z}\|\geq\|u_{r,z}\|-\|v_{r,z}-u_{r,z}\|$. Applying
(\ref{mnbound}) and (\ref{urlinur}) to (\ref{uvest}) we obtain
\[
\|v_{r,z}-u_{r,z}\|\le C(L)\sqrt h\|e(\Bu)\|,
\]
and hence, by (\ref{urlinur}) and (\ref{mnbound}),
\[
\|v_{r,z}\|\geq\|u_{r,z}\|-C(L)\sqrt h\|e(\Bu)\|.
\]
At this point the assumption that $\Bu\in X(m(h),n(h))$, where $m(h)$
and $n(h)$ satisfy (\ref{mnbound}) is insufficient. We also have to assume that
$\Bu=\BGy_{h}\in X(m(h),n(h))$ is a buckling mode. Recalling that the pair
$(X(m(h),n(h)),\mathfrak{K}_{1}(h,\BGf))$ characterizes buckling, we obtain
the inequality
$$\|e(\Bu)\|^2\leq C\mathfrak{K}_{1}(h,\Bu)\|u_{r,z}\|^2\leq
C\Hat{\Gl}(h)\|u_{r,z}\|^2\le Ch\|u_{r,z}\|^2.$$
Thus,
\begin{equation}
  \label{vrzurz}
\|v_{r,z}\|\geq \|u_{r,z}\|(1-C(L)h).
\end{equation}
Combining (\ref{e(u1)e(u)}), (\ref{step2}),  (\ref{step3}) and (\ref{vrzurz}) we obtain
(\ref{linu}).
\end{proof}
Introducing the notation $X_{\rm lin}(m,n)=X_{\rm lin}\cap X(m,n)$ we have the
following corollary of Theorem~\ref{th:lin}.
\begin{corollary}
\label{cor:main}
Let the integers  $m(h)\geq 1$ and  $n(h)$ be as in part (i) of Theorem~\ref{th:mn}.
Then the pair $(X_{\rm lin}(m(h), n(h)),\mathfrak{K}_{1}(h,\BGf))$ characterizes
buckling.
\end{corollary}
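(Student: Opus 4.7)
The plan is to invoke Lemma~\ref{lem:pairB_hJ} with $\CB_{h}=X(m(h),n(h))$ and $\CC_{h}=X_{\rm lin}(m(h),n(h))$. By Theorem~\ref{th:mn}(ii) the pair $(\CB_{h},\mathfrak{K}_{1}(h,\BGf))$ characterizes buckling, and $\CC_{h}\subset\CB_{h}$ trivially, so it only remains to produce a buckling mode that lies in $X_{\rm lin}(m(h),n(h))$.

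First I would extract a buckling mode $\BGy_{h}\in X(m(h),n(h))$. Such a sequence was in fact constructed at the end of the proof of Theorem~\ref{th:mn}(ii): picking $\BGy_{h}\in X(m(h),n(h))$ with $\mathfrak{K}_{1}(h,\BGy_{h})\le\Hat{\Gl}_{1}(h)+\Hat{\Gl}_{1}(h)^{2}$ yields $\mathfrak{K}_{1}(h,\BGy_{h})/\Hat{\Gl}_{1}(h)\to 1$, and since the pair $(\CA_{h},\mathfrak{K}_{1})$ characterizes buckling, faithfulness (Definition~\ref{def:equiv}(c)) identifies $\BGy_{h}$ as a buckling mode.

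Next I would form $\Bv_{h}=\CL(\BGy_{h})$ and check that $\Bv_{h}\in X_{\rm lin}(m(h),n(h))$. The fact that $\Bv_{h}\in X_{\rm lin}$ is immediate from the definition (\ref{Xlin}) of $X_{\rm lin}$ and the formula for $\CL$, and the observation already used in the proof of Theorem~\ref{th:lin} that $\CL$ preserves $X(m,n)$ (since the Fourier mode structure in $(\theta,z)$ survives averaging in $r$ and the affine adjustments in $r$) gives $\Bv_{h}\in X(m(h),n(h))$.

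Finally I would show that $\Bv_{h}$ is itself a buckling mode. Theorem~\ref{th:lin} gives
\[
\mathfrak{K}_{1}(h,\Bv_{h})\le\mathfrak{K}_{1}(h,\BGy_{h})(1+C(L)\sqrt{h}),
\]
and since $\BGy_{h}$ is a buckling mode we have $\mathfrak{K}_{1}(h,\BGy_{h})/\Hat{\Gl}_{1}(h)\to 1$, so
\[
\lims_{h\to 0}\frac{\mathfrak{K}_{1}(h,\Bv_{h})}{\Hat{\Gl}_{1}(h)}\le 1.
\]
The matching lower bound $\mathfrak{K}_{1}(h,\Bv_{h})\ge\Hat{\Gl}_{1}(h)$ is the definition of the infimum (since $\Bv_{h}\in X(m(h),n(h))\subset\CA_{h}$). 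Hence $\mathfrak{K}_{1}(h,\Bv_{h})/\Hat{\Gl}_{1}(h)\to 1$, and faithfulness for the pair $(X(m(h),n(h)),\mathfrak{K}_{1})$ provided by Theorem~\ref{th:mn}(ii) then promotes this limit into the statement that $\Bv_{h}$ is a buckling mode. Applying Lemma~\ref{lem:pairB_hJ} concludes the proof. The only mildly delicate point is ensuring that all the preservation properties of $\CL$ used above (membership in $X(m,n)$, and in particular the normalization $\int h(\theta,0)\,d\theta=0$ in the definition of $X_{\rm lin}$) are visible from the construction; these are all routine, and there is no genuine obstacle beyond re-reading the formulas.
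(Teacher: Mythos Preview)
Your proposal is correct and follows essentially the same route as the paper: invoke Lemma~\ref{lem:pairB_hJ}, take a buckling mode $\BGy_{h}\in X(m(h),n(h))$, apply the linearization operator $\CL$, and use Theorem~\ref{th:lin} together with the infimum bound $\mathfrak{K}_{1}(h,\CL(\BGy_{h}))\ge\Hat{\Gl}_{1}(h)$ to conclude that $\CL(\BGy_{h})$ is again a buckling mode. The paper's argument is identical in structure, only slightly more terse.
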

\begin{proof}
By Lemma~\ref{lem:pairB_hJ} it is sufficient to show that $X_{\rm lin}(m(h),
n(h))$ contains a buckling mode. Let $\BGy_{h}\in X(m(h),n(h))$ be a buckling
mode. Let us show that $\CL(\BGy_{h})\in X_{\rm lin}(m(h), n(h))$ is also a
buckling mode. Indeed, by Theorem~\ref{th:lin}
\[
1\le\frac{\mathfrak{K}_{1}(h,\CL(\BGy_{h}))}{\Hat{\Gl}_{1}(h)}\le
(1+C(L)\sqrt{h})\frac{\mathfrak{K}_{1}(h,\BGy_{h})}{\Hat{\Gl}_{1}(h)}.
\]
Taking a limit as $h\to 0$ and using the fact that $\BGy_{h}$ is a buckling
mode, we obtain
\[
\lim_{h\to 0}\frac{\mathfrak{K}_{1}(h,\CL(\BGy_{h}))}{\Hat{\Gl}_{1}(h)}=1.
\]
Hence, $\CL(\BGy_{h})$ is also a buckling mode, since the pair $(X(m(h),
n(h)),\mathfrak{K}_{1}(h,\BGf))$ characterizes buckling.
\end{proof}

\subsection{Algebraic simplification}
\label{sub:alg}
At this point the problem of finding the buckling load and a buckling mode can
be stated as follows. We first compute
\begin{equation}
  \label{almost}
  \Gl_{\rm lin}(h;m,n)=\inf_{\Bu\in X_{\rm lin}(m,n)}\mathfrak{K}_{1}(h,\Bu).
\end{equation}
The we find $m(h)$ and $n(h)$ as minimizers in
\begin{equation}
  \label{ldlin}
  \Gl_{\rm lin}(h)=\min_{m\ge 1\atop n\ge 0}\Gl_{\rm lin}(h;m,n).
\end{equation}
For any $\Bu\in X_{\rm lin}(m,n)$ the integral in $r$ over $I_{h}$ can be computed
explicitly and the minimization problem (\ref{almost}) can be reduced to an
algebraic problem via the Fourier expansion (\ref{Fourier}). However, the
explicit expressions one obtains are, to use an understatement,
unwieldy. Therefore, for the purposes of simplifying the algebra, we replace
$e(\Bu)$ in the numerator of the functional $\mathfrak{K}_{1}(h,\Bu)$ by
\[
E(\Bu)=\hf(G(\Bu)+G(\Bu)^{T}),\qquad G(\Bu)=\left[
  \begin{array}{ccc}
    u_{r,r} & \dfrac{u_{r,\Gth}-u_{\Gth}}{r} & u_{r,z}\\[2ex]
    u_{\Gth,r} & u_{\Gth,\Gth}+u_{r} & u_{\Gth,z}\\[2ex]
    u_{z,r} & u_{z,\Gth} & u_{z,z}
  \end{array}
\right].
\]
Let
\[
\mathfrak{K}_{0}(h,\Bu)=\frac{\int_{\CC_{h}}r^{-1}(\SFL_{0}E(\Bu),E(\Bu))d\Bx}{\|u_{r,z}\|^{2}}.
\]
\begin{theorem}
  \label{th:penultsimp}
The pair $(\CA_h, \mathfrak{K}_{0}(h,\Bu))$ characterizes buckling.
\end{theorem}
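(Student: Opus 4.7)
The plan is to apply Theorem~\ref{th:Bequivalence} with $\CB_h=\CA_h$, $J_1=\mathfrak{K}_1(h,\cdot)$ and $J_2=\mathfrak{K}_0(h,\cdot)$: since $(\CA_h,\mathfrak{K}_1)$ characterizes buckling by Lemma~\ref{lem:K1}, once one of the sufficient conditions (\ref{J1J2})--(\ref{J2J1}) for buckling equivalence is verified, the desired conclusion follows. I will check (\ref{J1J2}).

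First I would compute the pointwise difference $E(\Bu)-e(\Bu)$ in cylindrical coordinates. A direct inspection of the formulas shows that only the $(\Gth\Gth)$ and $(\Gth z)$ entries are nonzero, and both carry an explicit factor of $r-1$:
\[
(E-e)_{\Gth\Gth} = (r-1)\, e(\Bu)_{\Gth\Gth}, \qquad (E-e)_{\Gth z} = \frac{r-1}{2r}\, u_{z,\Gth},
\]
so $E-e = O(h)$ pointwise on $\CC_h$. Using $r^{-1}d\Bx = dr\,d\Gth\,dz$, the numerator of $\mathfrak{K}_0$ is $\int (\SFL_0 E,E)\,dr\,d\Gth\,dz$, while that of $\mathfrak{K}_1$ is $\int (\SFL_0 e,e)\,r\,dr\,d\Gth\,dz$. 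I would split their difference into a piece of the form $(\SFL_0 e,e)(r-1)$ — manifestly $O(h)\|e(\Bu)\|^{2}$ — and the algebraic identity $(\SFL_0 E,E)-(\SFL_0 e,e)=2(\SFL_0 e,E-e)+(\SFL_0(E-e),E-e)$, whose terms are controlled by $h\|e(\Bu)\|^{2}$, $h\|e(\Bu)\|\|u_{z,\Gth}\|$, and $h^{2}\|u_{z,\Gth}\|^{2}$.

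The main obstacle is that $u_{z,\Gth}$ is not bounded by $\|e(\Bu)\|$ alone; the best available control is $\|u_{z,\Gth}\|^{2}\le C(L)h^{-1/2}\|e(\Bu)\|^{2}$ from Theorem~\ref{th:KLI}. The saving is that every contribution of $u_{z,\Gth}$ comes multiplied by the factor $r-1 = O(h)$. After applying Cauchy--Schwarz the worst summand is $h\|e(\Bu)\|\|u_{z,\Gth}\|\le C(L)h^{3/4}\|e(\Bu)\|^{2}$, and the quadratic one is $C(L)h^{3/2}\|e(\Bu)\|^{2}$. Hence the numerator defect is at most $C(L)h^{3/4}\|e(\Bu)\|^{2}$, and dividing by $\|u_{r,z}\|^{2}$ gives the pointwise relative bound
\[
|\mathfrak{K}_1(h,\BGf)-\mathfrak{K}_0(h,\BGf)|\le C(L)\,h^{3/4}\,\mathfrak{K}_1(h,\BGf),\qquad \BGf\in\CA_h.
\]

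To finish, I would verify (\ref{J1J2}). Since $\mathfrak{K}_1(h,\BGf)\ge\Hat{\Gl}_1(h)$ on $\CA_h$ by definition of infimum and $\mathfrak{K}_0\ge \mathfrak{K}_1/2$ for $h$ small, the pointwise bound above yields
\[
\left|\frac{1}{\mathfrak{K}_1(h,\BGf)}-\frac{1}{\mathfrak{K}_0(h,\BGf)}\right|=\frac{|\mathfrak{K}_1-\mathfrak{K}_0|}{\mathfrak{K}_1\mathfrak{K}_0}\le \frac{2C(L)h^{3/4}}{\Hat{\Gl}_1(h)}.
\]
Combining with $\Hat{\Gl}_1(h)\ge c(L)h$ from (\ref{h1scale}) and $\Gl(h)/\Hat{\Gl}_1(h)\to 1$ from Theorem~\ref{th:crit}(a) applied to $(\CA_h,\mathfrak{K}_1)$, we obtain $\Gl(h)\sup_{\BGf\in\CA_h}|1/\mathfrak{K}_1-1/\mathfrak{K}_0|\le C(L)h^{3/4}\,\Gl(h)/\Hat{\Gl}_1(h)\to 0$. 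Thus (\ref{J1J2}) holds and Theorem~\ref{th:Bequivalence} delivers the claim.
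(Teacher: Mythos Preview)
Your argument is correct and follows the same overall scheme as the paper---reduce to Theorem~\ref{th:Bequivalence} by bounding the numerator defect---but the estimate you use is different and in fact sharper. The paper uses the crude pointwise bound $|E(\Bu)-e(\Bu)|\le h|\Grad\Bu|$ together with the full Korn inequality~(\ref{KI}), obtaining $\|E(\Bu)-e(\Bu)\|\le C(L)h^{1/4}\|e(\Bu)\|$ and hence the relative error $C(L)h^{1/4}$ in~(\ref{ineqk0ki}). You instead compute the only nontrivial entries of $E-e$ explicitly and feed the $u_{z,\Gth}$ contribution through the component Korn-like estimate~(\ref{zthz}) of Theorem~\ref{th:KLI}, which yields the better rate $h^{3/4}$. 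One small point you leave implicit: passing from the bound $C(L)h^{3/4}\|e(\Bu)\|^{2}/\|u_{r,z}\|^{2}$ to $C(L)h^{3/4}\mathfrak{K}_1(h,\BGf)$ uses the coercivity~(\ref{Lcoerc}); this is routine but should be stated. The paper's cruder route has the minor advantage, recorded in Remark~\ref{rem:K0}, that it relies only on~(\ref{KI}) and therefore transparently carries over to the fixed-bottom space $W_h$; your version does too, via Remark~\ref{rem:KLI}, but at the cost of invoking the finer Theorem~\ref{th:KLI}.
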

\begin{proof}
First we observe that
\begin{equation}
  \label{Jacob}
 \left|\int_{\CC_{h}}\left(\nth{r}-1\right)(\SFL_{0}E(\Bu),E(\Bu))d\Bx\right|\le
h\int_{\CC_{h}}(\SFL_{0}E(\Bu),E(\Bu))d\Bx.
\end{equation}
We easily see that $|E(\Bu)-e(\Bu)|\le h|\Grad\Bu|$.
Therefore, by the Korn inequality (\ref{KI}) we obtain
\[
\|E(\Bu)-e(\Bu)\|\le C(L)h^{1/4}\|e(\Bu)\|.
\]
This inequality also implies that
\[
\|e(\Bu)\|\le C(L)h^{1/4}\|e(\Bu)\|+\|E(\Bu)\|,
\]
from which we conclude that $\|e(\Bu)\|\le C(L)E(\Bu)$ for sufficiently small $h$.
Therefore,
\begin{equation}
  \label{Evse}
\|E(\Bu)-e(\Bu)\|\le C(L)h^{1/4}\|E(\Bu)\|,
\end{equation}
and
\[
\left|\int_{\CC_{h}}[(\SFL_{0}e(\Bu),e(\Bu))-(\SFL_{0}E(\Bu),E(\Bu))]d\Bx\right|\le
C(L)\|E(\Bu)-e(\Bu)\|\|E(\Bu)\|\le C(L)h^{1/4}\|E(\Bu)\|^{2}
\]
Hence,
\[
\left|\int_{\CC_{h}}(\SFL_{0}e(\Bu),e(\Bu))d\Bx-\int_{\CC_{h}}r^{-1}(\SFL_{0}E(\Bu),E(\Bu))d\Bx\right|
\le C(L)h^{1/4}\int_{\CC_{h}}r^{-1}(\SFL_{0}E(\Bu),E(\Bu))d\Bx.
\]
Therefore,
\begin{equation}
\label{ineqk0ki}
\left|\nth{\mathfrak{K}_{0}(h,\Bu)}-\nth{\mathfrak{K}_{1}(h,\Bu)}\right|\le
\frac{C(L)h^{1/4}}{\mathfrak{K}_{1}(h,\Bu)}.
\end{equation}
It follows that
\[
\Gl(h)\sup_{\Bu\in\CA_{h}}\left|\nth{\mathfrak{K}_{0}(h,\Bu)}-\nth{\mathfrak{K}_{1}(h,\Bu)}\right|\le C(L)h^{1/4}\frac{\Gl(h)}{\Hat{\Gl}_{1}(h)}.
\]
We conclude that condition (\ref{J1J2}) is satisfied, since
$(\CA_h, \mathfrak{K}_{1}(h,\Bu))$ characterizes buckling. Then, by
Theorem~\ref{th:Bequivalence} the pair $(\CA_h, \mathfrak{K}_{0}(h,\Bu))$
characterizes buckling.
  \end{proof}
\begin{remark}
  \label{rem:K0}
The proof of Theorem~\ref{th:penultsimp} uses only the Korn inequality
(\ref{KI}). Therefore, it is also valid for the fixed bottom \bc s (\ref{BCfixed}).
\end{remark}
Recalling that $X_{\rm lin}(m(h),n(h))$ contains a buckling mode, we have the following
corollary of Theorem~\ref{th:penultsimp}.
\begin{corollary}
  The pair $(X_{\rm lin}(m(h),n(h)),\mathfrak{K}_{0}(h,\BGf))$ characterizes buckling.
\end{corollary}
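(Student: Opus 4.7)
The plan is to recognize the corollary as an immediate consequence of Theorem~\ref{th:penultsimp} combined with Lemma~\ref{lem:pairB_hJ}, with essentially no new work required. Theorem~\ref{th:penultsimp} already supplies a characterizing pair $(\CA_{h},\mathfrak{K}_{0}(h,\BGf))$, and Lemma~\ref{lem:pairB_hJ} permits us to pass to any smaller ambient space that still contains a buckling mode in the intrinsic sense of Definition~\ref{def:Bload}, while keeping the functional fixed. Hence I would reduce the corollary to verifying two conditions: $X_{\rm lin}(m(h),n(h))\subset\CA_{h}$, and $X_{\rm lin}(m(h),n(h))$ contains a buckling mode.

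For the first inclusion, I would note that by definition $X_{\rm lin}(m(h),n(h))=X_{\rm lin}\cap X(m(h),n(h))$, and (\ref{Xlin}) already places $X_{\rm lin}\subset V_{h}$. For any $\BGf\in X(m(h),n(h))$ with $m(h)\geq 1$, the function depends non-trivially on $z$, so $\|\phi_{r,z}\|^{2}+\|\phi_{z,z}\|^{2}+\|\phi_{\Gth,z}\|^{2}>0$, placing $\BGf$ in $\CA_{h}$ as required.

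For the second condition, the key observation is that buckling modes are defined intrinsically by Definition~\ref{def:Bload}, via $\Gd^{2}\CE$, independently of any characterizing functional. Consequently, the vector field $\CL(\BGy_{h})\in X_{\rm lin}(m(h),n(h))$ constructed in the proof of Corollary~\ref{cor:main}, obtained by linearizing in $r$ a buckling mode $\BGy_{h}\in X(m(h),n(h))$ supplied by Theorem~\ref{th:mn}(ii), is equally a buckling mode in the present context. Applying Lemma~\ref{lem:pairB_hJ} with $\CB_{h}=\CA_{h}$, $\CC_{h}=X_{\rm lin}(m(h),n(h))$, and $J(h,\BGf)=\mathfrak{K}_{0}(h,\BGf)$ closes the argument.

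I do not anticipate any genuine obstacle, since the substantive work has already been carried out: Theorem~\ref{th:penultsimp} established the buckling equivalence of $\mathfrak{K}_{0}$ and $\mathfrak{K}_{1}$ via Theorem~\ref{th:Bequivalence} and the estimate (\ref{ineqk0ki}), while Theorem~\ref{th:lin} and Corollary~\ref{cor:main} showed that the linearization operator $\CL$ carries buckling modes into $X_{\rm lin}(m(h),n(h))$. The present corollary merely assembles these two ingredients through the abstract mechanism of Lemma~\ref{lem:pairB_hJ}.
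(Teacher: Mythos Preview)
Your proposal is correct and follows essentially the same approach as the paper. The paper states the corollary with the one-line justification ``Recalling that $X_{\rm lin}(m(h),n(h))$ contains a buckling mode, we have the following corollary of Theorem~\ref{th:penultsimp},'' which is precisely your combination of Theorem~\ref{th:penultsimp} with Lemma~\ref{lem:pairB_hJ} and the buckling mode produced in Corollary~\ref{cor:main}.
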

The linearization and passage to the Fourier space make it convenient to
introduce the following notation.
\[
\CC_{0}=\{\Bx(r,\Gth,z):r=1,\ \Gth\in\bb{T},\ z\in[0,L]\}
\]
is the mid-surface of the undeformed cylindrical shell. For $\Bf\in
W^{1,2}(\CC_{0};\bb{R}^{3})$ we define
\begin{equation}
  \label{ulin}
\Bu=\CU(\Bf),\qquad
\begin{cases}
u_{r}=f_{r}(\Gth,z),\\
u_{\Gth}=rf_{\Gth}(\Gth,z)-(r-1)f_{r,\Gth}(\Gth,z),\\
u_{z}=f_{z}(\Gth,z)-(r-1)f_{r,z}(\Gth,z).
\end{cases}
\end{equation}
For $m\ge 1$, $n\ge 0$ and $\Hat{\Bf}\in\bb{C}^{3}$ we define
\begin{equation}
  \label{Fmode}
\Bf(\Gth,z)=\CF_{m,n}(\Hat{\Bf}),\qquad
\begin{cases}
f_{r}(\Gth,z)=\re\left(\Hat{f}_{r}\sin\left(\frac{\pi mz}{L}\right)e^{in\Gth}\right),\\
f_{\Gth}=\re\left(\Hat{f}_{\Gth}\sin\left(\frac{\pi mz}{L}\right)e^{in\Gth}\right),\\
f_{z}=\re\left(\Hat{f}_{z}\cos\left(\frac{\pi mz}{L}\right)e^{in\Gth}\right).
\end{cases}
\end{equation}
We also define
\[
\CU_{m,n}(\Hat{\Bf})=\CU(\CF_{m,n}(\Hat{\Bf})),\qquad\Hat{\Bf}\in\bb{C}^{3}.
\]

We compute
\[
\mathfrak{K}_{0}(h,\CU(\Bf))=\mu\frac{Q_{0}(\Bf)+\frac{h^{2}}{12}Q_{1}(\Bf)}{B(\Bf)},\qquad
B(\Bf)=\int_{\CC_{0}}|f_{r,z}|^{2}dzd\Gth,
\]
where $\mu$ is the shear modulus and
\[
Q_{0}(\Bf)=\int_{\CC_{0}}\{\GL|f_{\Gth,\Gth}+f_{z,z}+f_{r}|^{2}+2|f_{\Gth,\Gth}+f_{r}|^{2}
+2|f_{z,z}|^{2}+|f_{z,\Gth}+f_{\Gth,z}|^{2}\}d\Bx,
\]
\[
Q_{1}(\Bf)=\int_{\CC_{0}}\{\GL|f_{r,zz}+f_{r,\Gth\Gth}-f_{\Gth,\Gth}|^{2}
+2|f_{r,\Gth\Gth}-f_{\Gth,\Gth}|^{2}+2|f_{r,zz}|^{2}+|f_{\Gth,z}-2f_{r,\Gth z}|^{2}\}d\Bx,
\]
where $\GL=2\nu/(1-2\nu)$ and $\nu$ is the
Poisson ratio. The problem of finding the buckling load and buckling load is stated as
(\ref{almost})--(\ref{ldlin}), where the functional $\mathfrak{K}_{1}(h,\BGf)$
is replaced with $\mathfrak{K}_{0}(h,\BGf)$.

When $\Bu=\CU_{m,n}(\Hat{\Bf})\in X_{\rm lin}(m,n)$ the problem (\ref{almost})
is purely algebraic and can be solved explicitly. However, the minimization in
(\ref{ldlin}) is a bit messy. In fact, the functional
$\mathfrak{K}_{0}(h,\BGf)$ can be simplified further, yielding a very simple
algebraic problem for computing the buckling load and the buckling mode. For
$\BGf=\CU(\Bf)$ we define
\[
\mathfrak{K}^{*}(h,\BGf)=\mu\frac{Q_{0}(\Bf)+\frac{h^{2}}{12}Q_{1}^{*}(\Bf)}{B(\Bf)},\qquad
B(\Bf)=\int_{\CC_{0}}|f_{r,z}|^{2}dzd\Gth,
\]
\[
Q_{1}^{*}(\Bf)=\int_{\CC_{0}}\{\GL|f_{r,zz}+f_{r,\Gth\Gth}|^{2}
+2|f_{r,\Gth\Gth}|^{2}+2|f_{r,zz}|^{2}+4|f_{r,\Gth z}|^{2}\}d\Bx,
\]

\begin{theorem}
  \label{th:ultsimp}
The pair $(X_{\rm lin}(m(h),n(h)),\mathfrak{K}^{*}(h,\BGf))$ characterizes buckling.
\end{theorem}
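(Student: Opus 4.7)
The plan is to verify the three conditions of Definition~\ref{def:equiv} for the pair $(X_{\rm lin}(m(h),n(h)),\mathfrak{K}^{*})$ directly, rather than invoking Theorem~\ref{th:Bequivalence}. The reason for this detour is that neither form (\ref{J1J2}) nor (\ref{J2J1}) of the uniform criterion in Theorem~\ref{th:Bequivalence} holds on all of $X_{\rm lin}(m(h),n(h))$: modes with $\Hat{f}_{\Gth}$ large and $\Hat{f}_{r}$ small cause the relevant sup to fail to vanish. The remedy is to exploit the fact that such modes have $\mathfrak{K}^{*}$ much larger than the minimum, so we only need to compare $\mathfrak{K}^{*}$ with $\mathfrak{K}_{0}$ on near-minimizers, where strong pointwise control is available. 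The starting point is the pointwise identity
$$\mathfrak{K}_{0}(h,\BGf)-\mathfrak{K}^{*}(h,\BGf)=\frac{\mu h^{2}}{12}\cdot\frac{Q_{1}(\Bf)-Q_{1}^{*}(\Bf)}{B(\Bf)},\qquad\Bf=\CF_{m(h),n(h)}(\Hat{\Bf}),\ k=m(h)\pi/L,$$
which reduces everything to bounding $h^{2}|Q_{1}-Q_{1}^{*}|/B$ on the relevant test functions.

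The main step is the following pointwise estimate: for any $\BGf\in X_{\rm lin}(m(h),n(h))$ with $\min(\mathfrak{K}_{0}(h,\BGf),\mathfrak{K}^{*}(h,\BGf))\le Ch$, one has $|\mathfrak{K}_{0}(h,\BGf)-\mathfrak{K}^{*}(h,\BGf)|=o(h)$. Indeed, $Q_{0}\le ChB$ combined with the membrane lower bound $Q_{0}\ge\pi L\,|in\Hat{f}_{\Gth}+\Hat{f}_{r}|^{2}$ and $k\sqrt{h}\le C$ from (\ref{mnbound}) forces $n|\Hat{f}_{\Gth}|\le C'|\Hat{f}_{r}|$ (for $n\ge 1$). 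A direct Fourier-space evaluation of $Q_{1}-Q_{1}^{*}$ followed by Cauchy-Schwarz on the cross terms yields
$$|Q_{1}(\Bf)-Q_{1}^{*}(\Bf)|\le C(k^{2}+n^{2})\bigl(|\Hat{f}_{\Gth}|^{2}+n|\Hat{f}_{r}||\Hat{f}_{\Gth}|\bigr)\cdot\pi L,$$
which, combined with the constraint on $n|\Hat{f}_{\Gth}|$ and $B=(\pi L/2)k^{2}|\Hat{f}_{r}|^{2}$, reduces to $|Q_{1}-Q_{1}^{*}|/B\le C(k^{2}+n^{2})/k^{2}$. The final input is the a priori Koiter-circle bound $(k^{2}+n^{2})^{2}\le Ck^{2}/h$ on the optimal pair $(m(h),n(h))$: this follows by comparing the upper bound $Q_{1}\le 12CB/h$ (from $\Hat\Gl_{1}(h)\le Ch$, see (\ref{h1scale})) with the lower bound $Q_{1}\ge\tfrac{1}{2}Q_{1}^{*}-C(|\Hat{f}_{r}|^{2}+B/n^{2})$ valid under $n|\Hat{f}_{\Gth}|\le C'|\Hat{f}_{r}|$, together with $Q_{1}^{*}\ge c(k^{2}+n^{2})^{2}|\Hat{f}_{r}|^{2}$. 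Putting these together, $h^{2}(k^{2}+n^{2})/k^{2}\le Ch^{3/2}/k=o(h)$ uniformly on near-minimizers, since $k\ge\pi/L$. The special case $n=0$, which forces $k^{2}\ge c/h$ from the corresponding bound $Q_{0}\ge 2\pi L|\Hat{f}_{r}|^{2}$, is treated separately using $Q_{0}\ge\pi L k^{2}|\Hat{f}_{\Gth}|^{2}$ to get $|\Hat{f}_{\Gth}|^{2}\le Ch|\Hat{f}_{r}|^{2}$, whence $h^{2}|Q_{1}-Q_{1}^{*}|/B\le Ch^{3}$.

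With this pointwise estimate in hand, all three parts of Definition~\ref{def:equiv} follow easily from the fact, established in the Corollary of Theorem~\ref{th:penultsimp}, that $(X_{\rm lin}(m(h),n(h)),\mathfrak{K}_{0})$ already characterizes buckling. For part (a), the upper bound $\inf\mathfrak{K}^{*}\le(1+o(1))\Gl(h)$ follows by evaluating $\mathfrak{K}^{*}$ at any near-minimizer $\BGf_{h}^{0}$ of $\mathfrak{K}_{0}$; the reverse inequality follows by taking a near-minimizer $\BGf_{h}^{*}$ of $\mathfrak{K}^{*}$ (which satisfies $\mathfrak{K}^{*}(\BGf_{h}^{*})\le 2Ch$, putting it in the regime of the pointwise estimate) and using $\mathfrak{K}_{0}(\BGf_{h}^{*})=\mathfrak{K}^{*}(\BGf_{h}^{*})+o(h)\ge(1-o(1))\Gl(h)$. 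Parts (b) and (c) follow from exactly the same pointwise estimate applied to buckling modes, since a buckling mode for either functional is automatically a near-minimizer for the other. The main obstacle of the proof is the derivation of the Koiter circle bound $(k^{2}+n^{2})^{2}\le Ck^{2}/h$ from the minimizing property of $(m(h),n(h))$; once this bound is in place the pointwise estimate closes the argument.
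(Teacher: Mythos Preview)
Your proof is correct and follows a genuinely different route from the paper's. The paper splits the work into two lemmas: first a rough comparison $\mathfrak{K}_{0}\le C\mathfrak{K}^{*}$ valid on all of $X_{\rm lin}(m(h),n(h))$ (obtained by a case split on $|\Hat{f}_{\Gth}|/|\Hat{f}_{r}|$, using (\ref{mnbound})), and then a sharp estimate $|\mathfrak{K}^{*}-\mathfrak{K}_{0}|\le Ch^{1/4}\mathfrak{K}_{0}$ valid under the one-sided hypothesis $\mathfrak{K}_{0}\le Ch$. The second lemma rests on the Korn-type inequality~(\ref{rtheta}), applied to $\Bu_{h}$ to extract the strong bound $|\Hat{f}_{\Gth}|^{2}\le C\sqrt{h}\,|\Hat{f}_{r}|^{2}$, which gives $|Q_{1}-Q_{1}^{*}|\le Ch^{1/4}Q_{1}^{*}$ with no further input on the wave numbers. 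You instead obtain the weaker bound $n|\Hat{f}_{\Gth}|\le C|\Hat{f}_{r}|$ purely algebraically from the membrane term $Q_{0}\ge 2|in\Hat{f}_{\Gth}+\Hat{f}_{r}|^{2}$ together with $\Hat{m}^{2}h\le C$, and then close by invoking the Koiter-circle bound $(\Hat{m}^{2}+n^{2})^{2}\le C\Hat{m}^{2}/h$ on the optimal pair. Your argument is more elementary---no Korn-type machinery is needed beyond what is already encoded in Theorem~\ref{th:mn}---and it treats the two functionals symmetrically (no separate rough-comparison lemma is required), while also delivering the sharper rate $|\mathfrak{K}_{0}-\mathfrak{K}^{*}|=O(h^{3/2})$ against the paper's $O(h^{5/4})$. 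One simplification you may have missed: the Koiter-circle bound you describe as ``the main obstacle'' is an immediate consequence of (\ref{mnbound}), since $\Hat{m}^{2}\le C/h$ and $n^{4}\le C\Hat{m}^{2}/h$ give $(\Hat{m}^{2}+n^{2})^{2}\le 2(\Hat{m}^{4}+n^{4})\le C\Hat{m}^{2}/h$; your separate derivation via $Q_{1}\le 12CB/h$ on a buckling mode works but is unnecessary.
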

\begin{proof}
We split the proof into a sequence of lemmas.
\begin{lemma}
  \label{lem:ultsimp1}
Suppose $m(h)\ge 1$ and $n(h)$ are integers satisfying (\ref{mnbound}) for all
$h\in(0,h_{0})$. Then,
There exists a constant $C(L)>0$, such
that for any $\Hat{\Bf}\in\bb{C}^{3}$ we have
\begin{equation}
\label{K0<K^ast}
\mathfrak{K}_{0}(h,\Bu_{h})\leq C(L)\mathfrak{K}^{*}(h,\Bu_{h}),
\end{equation}
where $\Bu_{h}=\CU_{m(h),n(h)}(\Hat{\Bf})\in X_{\rm lin}(m(h),n(h))$.
\end{lemma}
\begin{proof}
  For simplicity denote $\|f\|=\|f\|_{L^2(\CC_{0})}.$ If $n=0$,
  then
\[
\mathfrak{K}^{*}(h,\Bu)=\mathfrak{K}_{0}(h,\Bu)+\frac{|\Hat{f}_{\Gth}|^{2}}{|\Hat{f}_{r}|^{2}},
\]
from which (\ref{K0<K^ast}) follows. Now, let us assume that $n\geq 1$. For
each $\Bf=\CF_{m,n}(\Hat{\Bf})$ we have
\begin{equation}
\label{formulaQ1*}
Q_{1}^{*}(\Bf)=(\GL+2)(\Hat{m}^{2}+n^{2})^{2}|\Hat{f}_r|^{2}.
\end{equation}
We also have that
\[
|Q_{1}(\Bf)-Q_{1}^{*}(\Bf)|\le(\GL+2)(2(\|f_{r,zz}\|+\|f_{r,\theta\theta}\|)\|f_{\theta,\theta}\|+
\|f_{\theta,\theta}\|^2)+4\|f_{\theta,z}\|\|f_{r,\theta z}\|+\|f_{\theta,z}\|^{2},
\]
Computing the norms in terms of the Fourier coefficients we have
\begin{equation}
\label{Q1Q1star}
|Q_{1}(\Bf)-Q_{1}^{*}(\Bf)|\leq 6(\GL+2)(n+1)(\Hat{m}^{2}+n^2)
|\Hat{f}_\theta|(|\Hat{f}_r|+|\Hat{f}_\theta|).
\end{equation}
Consider now 2 cases.

\textbf{Case 1.} $|\Hat{f}_\Gth|<2|\Hat{f}_r|.$\\
In this case we have according to (\ref{Q1Q1star}) and (\ref{formulaQ1*}) that
\[
\frac{|Q_{1}(\Bf)-Q_{1}^{*}(\Bf)|}{Q_{1}^{*}(\Bf)}\leq\frac{36n}{\Hat{m}^2+n^2}\leq 36,
\]
and the inequality (\ref{K0<K^ast}) follows.

\textbf{Case 2.} $|\Hat{f}_\Gth|\ge2|\Hat{f}_r|.$\\
Observe that
$$\|f_{\Gth,\Gth}+f_{r}\|\geq \|f_{\Gth,\Gth}\|-\|f_{r}\|\geq n|\Hat{f}_{\Gth}|-
\frac{|\Hat{f}_\Gth|}{2}\geq \frac{n}{2}|\Hat{f}_{\Gth}|,$$
thus
$$Q_{0}(\Bf)\geq 2\|f_{\Gth,\Gth}+f_{r}\|^{2}\geq\frac{n^2}{2}|\Hat{f}_\Gth|^2.$$
Dividing (\ref{Q1Q1star}) by this inequality we obtain
\[
\frac{|Q_{1}(\Bf)-Q_{1}^{*}(\Bf)|}{Q_{0}(\Bf)}\leq
\frac{12(\GL+2)(\Hat{m}^2+n^2)(n+1)}{n^{2}}\Bigg(1+\frac{|\Hat{f}_r|}{|\Hat{f}_\Gth|}\Bigg)\le
36(\GL+2)(\Hat{m}^2+n^{2}).
\]
Thus
$$|\mathfrak{K}_{0}(h,\Bu)-\mathfrak{K}^{*}(h,\Bu)|=\frac{h^2}{12}\frac{|Q_{1}(\Bf)-Q_{1}^{*}(\Bf)|}{Q_{0}(\Bf)}\frac{Q_{0}(\Bf)}{B(\Bf)}\leq3h^{2}(\GL+2)(\Hat{m}^2+n^{2})\mathfrak{K}^{*}(h,\Bu).$$
Recalling that $m=m(h)$ and $n=n(h)$ satisfy (\ref{mnbound}) we conclude that
(\ref{K0<K^ast}) holds.
\end{proof}

\begin{lemma}
  \label{lem:ultsimp2}
Suppose $\Bu_{h}\in X_{\rm lin}(m(h),n(h))$ is such that there is a constant
$C_{0}$ independent of $h$, such that $\mathfrak{K}_{0}(h,\Bu_{h})\le
C_{0}h$. Then there exists $C_{1}>0$ depending only on $L$, $\SFL_{0}$ and $C_{0}$, such that
\begin{equation}
  \label{nofth}
|\mathfrak{K}^{*}(h,\Bu_{h})-\mathfrak{K}_{0}(h,\Bu_{h})|\le C_{1}h^{1/4}\mathfrak{K}_{0}(h,\Bu_{h}).
\end{equation}
\end{lemma}
\begin{proof}
Before we start the proof we remark that any buckling mode would satisfy all
the conditions of this Lemma. Let $\Hat{\Bf}_{h}\in\bb{C}^{3}$ be such that
$\Bu_{h}=\CU_{m(h),n(h)}(\Hat{\Bf}_{h})$. We also define
$\Bf_{h}=\CF_{m(h),n(h)}(\Hat{\Bf}_{h})$. We will suppress the explicit
dependence on $h$ in our notation below, and use $m$, $n$, $\Bf$ and $\Hat{\Bf}$
instead of $m(h)$, $n(h)$, $\Bf_{h}$ and $\Hat{\Bf}_{h}$, respectively.

We start with the application of
Lemma~\ref{lem:KTI} to $\Bu_{h}\in\CA_{h}$.
We compute
\[
\|u_{\Gth,z}\|^{2}_{L^{2}(\CC_{h})}=h\Hat{m}^{2}|\Hat{f}_{\Gth}|^{2}+
\frac{h^{3}\Hat{m}^{2}}{12}|\Hat{f}_{\Gth}-in\Hat{f}_{r}|^{2}\ge h\Hat{m}^{2}|\Hat{f}_{\Gth}|^{2}.
\]
Then, according to Lemma~\ref{lem:KTI} we get
\begin{equation}
\label{ineqlemma3.2}
h\Hat{m}^2\|f_{\Gth}\|^2\leq 2\|e(\Bu_{h})\|_{L^2(\CC_h)}(\|u_r\|_{L^2(\CC_h)}+\|e(\Bu_{h})\|_{L^2(\CC_h)}).
\end{equation}
By coercivity of $\SFL_{0}$ and the assumption of the
Lemma we have
\[
C_{0}h\geq\mathfrak{K}_1(h,\Bu_{h})\geq\nth{\Ga_{\SFL_{0}}}
\frac{\|e(\Bu_{h})\|_{L^2(\CC_h)}^2}{\|u_{r,z}\|_{L^2(\CC_h)}^2},
\]
Thus, there is a constant $C=\Ga_{\SFL_{0}}C_{0}$ such that
$$\|e(\Bu_{h})\|_{L^2(\CC_h)}^2\leq Ch\|\psi_{r,z}\|_{L^2(\CC_h)}^2=
Ch^2\Hat{m}^2|\Hat{f}_r|^2.$$
Using this inequality to eliminate $\|e(\Bu_{h})\|_{L^2(\CC_h)}$ from the
\rhs\ of (\ref{ineqlemma3.2}) we obtain
$$|\Hat{f}_\Gth|^2\leq C\Bigg(\frac{\sqrt{h}}{\Hat{m}}+h\Bigg)|\Hat{f}_r|^2
\leq C|\Hat{f}_r|^2\sqrt h$$
for a possibly different constant $C$.
Using this inequality to eliminate $|\Hat{f}_\Gth|$ from the \rhs\ of
(\ref{Q1Q1star}) we obtain
\[
|Q_{1}(\Bf)-Q_{1}^{*}(\Bf)|\le
Ch^{1/4}(n+1)(\Hat{m}^{2}+n^{2})|\Hat{f}_r|^2,
\]
Recalling the formula (\ref{formulaQ1*}) for $Q_{1}^{*}(\Bf)$ we get
\[
\frac{|Q_{1}(\Bf)-Q_{1}^{*}(\Bf)|}{|Q_{1}^{*}(\Bf)|}\leq
\frac{C_{1}h^{1/4}(n+1)}{\Hat{m}^2+n^2}\leq C_{2}h^{1/4}.
\]
It is now clear that
\begin{equation}
  \label{wrongK}
|\mathfrak{K}^{*}(h,\Bu_{h})-\mathfrak{K}_{0}(h,\Bu_{h})|\leq
\frac{Ch^{2+\frac{1}{4}}Q_{1}^{*}(\Bf)}{12B(\Bf)}\leq Ch^{1/4}\mathfrak{K}^{*}(h,\Bu_{h}).
\end{equation}
We also have
\[
\mathfrak{K}^{*}(h,\Bu_{h})\le\mathfrak{K}_{0}(h,\Bu_{h})+|\mathfrak{K}^{*}(h,\Bu_{h})-\mathfrak{K}_{0}(h,\Bu_{h})|.
\]
Therefore, (\ref{wrongK}) also implies (\ref{nofth}).
\end{proof}
We are now ready to prove the properties (a)--(c) in
Definition~\ref{def:equiv}. Let
$$\Hat{\Gl}^{*}(h)=\inf_{\BGf\in X_{\rm lin}(m(h),n(h))}\mathfrak{K}^{*}(h,\BGf).$$
Let $\BGy_{h}\in X_{\rm lin}(m(h),n(h))$ be a buckling mode, whose existence
is guaranteed by the Corollary~\ref{cor:main}. Then by
Lemma~\ref{lem:ultsimp2} we have
\[
\lim_{h\to 0}\frac{\mathfrak{K}^{*}(h,\BGy_{h})}{\Gl(h)}=1.
\]
Part (b) of Definition~\ref{def:equiv} is proved. In particular, we obtain
\begin{equation}
\label{ineqL1*(m,n)L1}
\lims_{h\to 0}\frac{\Hat{\Gl}^{*}(h)}{\Gl(h)}\leq 1.
\end{equation}
Now, let $\BGf_h\in X_{\rm lin}(m(h),n(h))$ be such that
\[
\lim_{h\to 0}\frac{\mathfrak{K}^{*}(h,\BGf_h)}{\Hat{\Gl}^{*}(h)}=1.
\]
Then by (\ref{ineqL1*(m,n)L1}) we have $\mathfrak{K}^{*}(h,\BGf_h)\leq Ch$ for
some $C>0,$ and thus, by Lemma~\ref{lem:ultsimp1} we have
$\mathfrak{K}_0(h,\BGf_h)\leq C(L)h$. The inequality (\ref{wrongK}) then
implies that
\begin{equation}
  \label{partc}
\lim_{h\to 0}\frac{\mathfrak{K}_0(h,\BGf_h)}{\Hat{\Gl}^{*}(h)}=1.
\end{equation}
Therefore,
$$\limsup_{h\to 0}\frac{\Hat{\Gl}_1(h)}{\Hat{\Gl}^{*}(h)}\leq 1,$$
which together with (\ref{ineqL1*(m,n)L1}) implies the validity of part (a) of
Definition~\ref{def:equiv}. In particular, this implies that
\[
\lim_{h\to 0}\frac{\mathfrak{K}_0(h,\BGf_h)}{\Gl(h)}=1.
\]
Hence, $\BGf_{h}$ must be a buckling mode, since the pair
$(X_{\rm lin}(m(h),n(h)))$ characterizes buckling.  This proves part (c)
Definition~\ref{def:equiv}.
\end{proof}

\subsection{Explicit formulas for buckling load and buckling mode}
\label{sub:blbm}
In this section we solve the minimization problem
$$\inf_{\Bu\in X_{\rm lin}(m,n)}\mathfrak{K}^{*}(h,\Bu),$$
for any pair of integers $m\geq1$ and $n\neq 0$ satisfying (\ref{mnbound}).
Let  $\Bu\in X_{\rm lin}(m,n)$ be given by (\ref{ulin}).
Then $f_{r}(\Gth,0)=f_{\Gth}(\Gth,0)=f_{r}(\Gth,L)=f_{\Gth}(\Gth,L)=0,$ and
\[
\mathfrak{K}^{*}(h,\Bu)=\mu\frac{Q_{0}(\Bf)+\frac{h^{2}}{12}Q_{1}^{*}(\Bf)}{B(\Bf)},\qquad
B(\Bf)=\int_{\CC_{0}}|f_{r,z}|^{2}dzd\Gth,
\]
where
\[
Q_{0}(\Bf)=\int_{\CC_{0}}\{\GL|f_{\Gth,\Gth}+f_{z,z}+f_{r}|^{2}+2|f_{\Gth,\Gth}+f_{r}|^{2}
+2|f_{z,z}|^{2}+|f_{z,\Gth}+f_{\Gth,z}|^{2}\}d\Bx,
\]
\[
Q_{1}^{*}(\Bf)=\int_{\CC_{0}}\{\GL|f_{r,zz}+f_{r,\Gth\Gth}|^{2}
+2|f_{r,\Gth\Gth}|^{2}+2|f_{r,zz}|^{2}+|2f_{r,\Gth z}|^{2}\}d\Bx.
\]
When $\Bf(\Gth,z)$ is such that $\Bu\in X_{\rm lim}(m,n)$ we obtain
\[
  Q_{0}(\Hat{\BGf})=\GL|in\Hat{\phi}_{\Gth}-\Hat{m}\Hat{\phi}_{z}+\Hat{\phi}_{r}|^{2}+
2|in\Hat{\phi}_{\Gth}+\Hat{\phi}_{r}|^{2}+2\Hat{m}^{2}|\Hat{\phi}_{z}|^{2}
+|in\Hat{\phi}_{z}+\Hat{m}\Hat{\phi}_{\Gth}|^{2},
\]
\[
Q_{1}^{*}(\Hat{\BGf})=(\GL+2)(\Hat{m}^{2}+n^{2})^{2}|\Hat{\phi}_{r}|^{2}.
\]
The minimum of $Q_{0}(\Hat{\BGf})$ in
$(\Hat{\phi}_{\Gth},\Hat{\phi}_{z})$ is achieved at
\begin{equation}
  \label{phithetaz}
  \begin{cases}
    \Hat{\phi}_{\Gth}^{*}=in\Hat{\phi}_{r}\dfrac{(3\GL+4)\Hat{m}^{2}+(\GL+2)n^{2}}
{(\GL+2)(n^{2}+\Hat{m}^{2})^{2}},\\[2ex]
\Hat{\phi}_{z}^{*}=\Hat{m}\Hat{\phi}_{r}\dfrac{\GL\Hat{m}^{2}-(\GL+2)n^{2}}
{(\GL+2)(n^{2}+\Hat{m}^{2})^{2}}.
  \end{cases}
\end{equation}
Substituting these values back into the quadratic form $Q_{0}$ we obtain
\[
Q_{0}=\frac{4|\Hat{\phi}_{r}|^{2}\Hat{m}^{4}(\GL+1)}{(\GL+2)(n^{2}+\Hat{m}^{2})^{2}},
\]
and hence
\[
\Hat{\Gl}^{*}(h;m,n)=\frac{4\Hat{m}^{2}(\GL+1)}
{(\GL+2)(n^{2}+\Hat{m}^{2})^{2}}+
\frac{h^{2}(\GL+2)(\Hat{m}^{2}+n^{2})^{2}}{12\Hat{m}^{2}}.
\]
Minimizing in $(m,n)$ we obtain
\begin{equation}
  \label{Koiterbl}
\Hat{\Gl}^{*}(h)=\mu\min_{m\ge 1\atop n\ge 0}\left\{\frac{4\Hat{m}^{2}(\GL+1)}
{(\GL+2)(n^{2}+\Hat{m}^{2})^{2}}+
\frac{h^{2}(\GL+2)(\Hat{m}^{2}+n^{2})^{2}}{12\Hat{m}^{2}}\right\}=2\mu h\sqrt{\frac{\GL+1}{3}},
\end{equation}
achieved at the Koiter's circle:
\begin{equation}
  \label{KoiterC}
h(\GL+2)(n^{2}+\Hat{m}^{2})^{2}=4\Hat{m}^{2}\sqrt{3(\GL+1)}.
\end{equation}
We see how this equation implies our bounds $\Hat{m}(h)^{2}h\le C$ and
$hn(h)^{4}\le C\Hat{m}(h)^{2}$.
Hence, for any $m=0,1,\ldots,M(h)$ we define
\begin{equation}
  \label{Koitern}
n(m)=\left[\sqrt{2\Hat{m}\frac{\sqrt[4]{3(\GL+1)}}{\sqrt{h(\GL+2)}}-\Hat{m}^{2}}\right],
\end{equation}
where
\begin{equation}
  \label{mhmax}
M(h)=\left[\frac{2L}{\pi}\frac{\sqrt[4]{3(\GL+1)}}{\sqrt{h(\GL+2)}}\right].
\end{equation}
The buckling modes can then be labeled by the wave number $m=0,1,\ldots,M(h)$
and given by
\[
\begin{cases}
\phi_{r}=\sin(\Hat{m}z)\cos(n(m)\Gth),\\[2ex]
\phi_{\Gth}=-hn(m)\dfrac{(3\GL+4)\Hat{m}^{2}+(\GL+2)n^{2}}
{4\Hat{m}^{2}\sqrt{3(\GL+1)}}\sin(n(m)\Gth)\sin(\Hat{m}z),\\[2ex]
\phi_{z}=h\dfrac{\GL\Hat{m}^{2}-(\GL+2)n(m)^{2}}
{4\Hat{m}\sqrt{3(\GL+1)}}\cos(\Hat{m}z)\cos(n(m)\Gth).
\end{cases}
\]

\begin{figure}[t]
 \centering
 \includegraphics[scale=0.5]{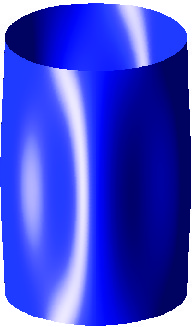}\hspace{10ex}
 \includegraphics[scale=0.5]{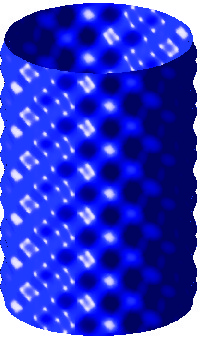}\hspace{10ex}
 \includegraphics[scale=0.5]{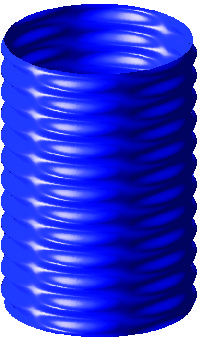}
 \caption{Buckling modes corresponding, left to right, to $m=1$, $m=M(h)/2$
   and $m=M(h)$ on the Koiter's circle.}
 \label{fig:mode1}
\end{figure}
The figure of the buckling mode corresponding to $m=1$ is shown in Figure~\ref{fig:mode1}.

\section{Fixed bottom boundary conditions}
\setcounter{equation}{0}
\label{sec:altbc}
If the \bc s (\ref{BCfixed}) are imposed, then we can no longer work with a
single Fourier mode space $X(m,n)$, since it has a zero intersection with the
space $W_{h}$ defined by (\ref{Wh}). Hence most of the analysis in
Section~\ref{sec:perfect} cannot be done for the fixed bottom boundary
conditions. However, we can still compute the buckling load and exhibit
buckling modes by modifying the explicit formulas (\ref{phithetaz}) for the
buckling modes for the \bc s (\ref{BCaverage}). According to
Remark~\ref{rem:K0} the pair $(\CA_{h}\cap W_{h},\mathfrak{K}_{0})$
characterizes buckling for the \bc s (\ref{BCfixed}). It is therefore clear that
\[
\Tld{\Gl}_{0}(h)=\inf_{\BGf\in W_{h}\cap\CA_{h}}\mathfrak{K}_{0}(h,\BGf)\ge
\Hat{\Gl}_{0}(h)=\inf_{\BGf\in\CA_{h}}\mathfrak{K}_{0}(h,\BGf).
\]
Therefore,
\[
\limi_{h\to 0}\frac{\Tld{\Gl}_{0}(h)}{\Gl(h)}\ge 1.
\]
If we find a specific test function $\Bu_{h}\in W_{h}\cap\CA_{h}$ such that
\[
\lim_{h\to 0}\frac{\mathfrak{K}_{0}(h,\Bu_{h})}{\Gl(h)}=1
\]
then
\[
1=\lim_{h\to 0}\frac{\mathfrak{K}_{0}(h,\Bu_{h})}{\Gl(h)}\ge
\limi_{h\to 0}\frac{\Tld{\Gl}_{0}(h)}{\Gl(h)}\ge 1.
\]
Which proves that $\Bu_{h}\in W_{h}$ is a buckling mode and
\[
\lim_{h\to 0}\frac{\Tld{\Gl}_{0}(h)}{\Gl(h)}=1.
\]
The idea is to look for the buckling mode in the space $X_{\rm lin}^{0}(n)$ of
all functions of the form
\begin{equation}
  \label{X0lin}
  \begin{cases}
  u_{r}=\re(\phi_{r}(z)e^{in\Gth}),& \phi_{r}\in W_{0}^{1,2}([0,L];\bb{C}),\ \phi_{r}'(0)=0,\\
  u_{\Gth}=\re(e^{in\Gth}(r\phi_{\Gth}(z)-(1-r)in\phi_{r}(z))),& \phi_{\Gth}\in W_{0}^{1,2}([0,L];\bb{C}),\\
  u_{z}=\re((\phi_{z}(z)-(1-r)\phi'_{r}(z))e^{in\Gth}),& \phi_{z}\in W^{1,2}([0,L];\bb{C}),\ \phi_{z}(0)=0.
  \end{cases}
\end{equation}
For any $\Bu\in X_{\rm lin}^{0}(n(h))$ we have
\[
\mathfrak{K}_{0}(h,\Bu)=\mu\frac{Q_{0}^{0}(\BGf)+\frac{h^{2}}{12}Q_{1}^{0}(\BGf)}{B_{0}(\BGf)},
\]
where
\[
Q_{0}^{0}(\BGf)=\int_{0}^{L}\{\GL|in\phi_{\Gth}+\phi'_{z}+\phi_{r}|^{2}+2|in\phi_{\Gth}+\phi_{r}|^{2}
+2|\phi'_{z}|^{2}+|in\phi_{z}+\phi'_{\Gth}|^{2}\}dz,
\]
\[
Q_{1}^{0}(\BGf)=\int_{0}^{L}\{\GL|\phi''_{r}-n^{2}\phi_{r}-in\phi_{\Gth}|^{2}
+2|n^{2}\phi_{r}+in\phi_{\Gth}|^{2}+2|\phi''_{r}|^{2}+|\phi'_{\Gth,}-2in\phi'_{r}|^{2}\}dz,
\]
\[
B_{0}(\BGf)=\int_{0}^{L}|\phi'_{r}|^{2}dz.
\]
Even though the fixed bottom \bc s prevent the problem to be diagonalized in
the Fourier space, it is still useful to represent $\BGf(z)$ in the form of
Fourier series
\[
\begin{cases}
\displaystyle\phi_{r}(z)=\sum_{m=1}^{\infty}\Hat{\phi}_{r}(m)\sin(\Hat{m}z),\\
\displaystyle\phi_{\Gth}(z)=\sum_{m=1}^{\infty}\Hat{\phi}_{\Gth}(m)\sin(\Hat{m}z),\\
\displaystyle\phi_{z}(z)=\sum_{m=0}^{\infty}\Hat{\phi}_{z}(m)\cos(\Hat{m}z).
\end{cases}
\]
The boundary condition (\ref{BCfixed}) translate via (\ref{X0lin}) into the
additional constraints
\begin{equation}
  \label{constraints}
\sum_{m=1}^{\infty}m\Hat{\phi}_{r}(m)=0,\qquad\sum_{m=0}^{\infty}\Hat{\phi}_{z}(m)=0.
\end{equation}
In terms of Fourier coefficients we have
\[
Q_{0}^{0}(\BGf)=\sum_{m=0}^{\infty}Q_{m}^{0}(\Hat{\BGf}),\quad
Q_{1}^{0}(\BGf)=\sum_{m=1}^{\infty}Q_{m}^{1}(\Hat{\BGf}),\quad
B_{0}(\BGf)=\sum_{m=1}^{\infty}B_{m}^{0}(\Hat{\BGf}),
\]
where
\[
Q_{m}^{0}(\Hat{\BGf})=\GL|in\Hat{\phi}_{\Gth}-\Hat{m}\Hat{\phi}_{z}+\Hat{\phi}_{r}|^{2}+
2|in\Hat{\phi}_{\Gth}+\Hat{\phi}_{r}|^{2}+2\Hat{m}^{2}|\Hat{\phi}_{z}|^{2}
+|in\Hat{\phi}_{z}+\Hat{m}\Hat{\phi}_{\Gth}|^{2},
\]
\[
Q_{m}^{1}(\Hat{\BGf})=\GL|\Hat{m}^{2}\Hat{\phi}_{r}+n^{2}\Hat{\phi}_{r}+in\Hat{\phi}_{\Gth}|^{2}
+2|n^{2}\Hat{\phi}_{r}+in\Hat{\phi}_{\Gth}|^{2}+2\Hat{m}^{4}|\Hat{\phi}_{r}|^{2}+\Hat{m}^{2}|\Hat{\phi}_{\Gth,}-2in\Hat{\phi}_{r}|^{2}.
\]
\[
B_{m}^{0}(\Hat{\BGf})=\Hat{m}^{2}|\Hat{\phi}_{r}|^{2}.
\]
The fixed bottom \bc s do not place any additional constraints on the Fourier
coefficients of $\phi_{\Gth}$. Therefore, we can minimize
$Q_{m}^{0}+h^{2}Q_{m}^{1}/12$ in $\Hat{\phi}_{\Gth}$ to obtain an explicit
expression of $\Hat{\phi}_{\Gth}(m)$ in terms of $\Hat{\phi}_{r}(m)$ and
$\Hat{\phi}_{z}(m)$. However, we may simplify the algebra by
recalling that the functional $\mathfrak{K}^{*}$ could be used to compute the
buckling load. We therefore determine the relation between
$\Hat{\phi}_{\Gth}(m)$ and $\Hat{\phi}_{r}(m)$, and $\Hat{\phi}_{z}(m)$ by
minimizing $Q_{m}^{0}$ in $\Hat{\phi}_{\Gth}$. We obtain
\begin{equation}
  \label{phithfixedbc}
\Hat{\phi}_{\Gth}=in\frac{(\GL+2)\Hat{\phi}_{r}-(\GL+1)\Hat{m}\Hat{\phi}_{z}}
{(\GL+2)n^{2}+\Hat{m}^{2}}.
\end{equation}
We now cook-up a test function based on (\ref{phithetaz}).

Let $m=m(h)$ be such that
\begin{equation}
  \label{m(h)}
\lim_{h\to 0}m(h)=\infty,\qquad\lim_{h\to 0}m(h)\sqrt{h}=0.
\end{equation}
Let $n=n(h)=n(m(h))$ be given by (\ref{Koitern}). We remark that
under our assumptions $n(h)\gg m(h)$. Let
\[
\phi_{r}=\left(\frac{\sin(\Hat{m}z)}{\Hat{m}}-
\frac{\sin(\Hat{m+2}z)}{\Hat{m+1}}\right)\cos(n\Gth).
\]
This function satisfies all the required \bc s in (\ref{X0lin}). It's
$z$-derivative also vanishes at the top of the cylinder, even though we do not
require it. If we define $\Hat{\phi}_{z}(m)$ by (\ref{phithetaz}) then the
resulting function $\phi_{z}$ will not vanish exactly at the bottom of the
cylindrical shell. That is why we modify
(\ref{phithetaz}) as follows
\begin{equation}
  \label{phizfixedbc}
  \phi_{z}=T(m,n)(\cos(\Hat{m+2}z)-\cos(\Hat{m}z))\cos(n\Gth),
\end{equation}
where
\[
T(m,n)=\frac{(\GL+2)n^{2}-\GL\Hat{m}^{2}}{(\GL+2)(n^{2}+\Hat{m}^{2})^{2}}.
\]
Once again we observe that $\phi_{z}$ vanishes not only at the bottom
boundary, but also at the top, accommodating even pure displacement \bc s on top
and bottom edges. We may simplify our test function if we retain only the necessary
asymptotics as $h\to 0$ in (\ref{phithfixedbc}) and (\ref{phizfixedbc}):
\begin{equation}
  \label{fixedbcmode}
  \begin{cases}
\phi_{r}=\left(\dfrac{\sin(\Hat{m}z)}{\Hat{m}}-
\dfrac{\sin(\Hat{m+2}z)}{\Hat{m+2}}\right)\cos(n\Gth),\\
\phi_{z}=\nth{n^{2}}(\cos(\Hat{m+2}z)-\cos(\Hat{m}z))\cos(n\Gth),\\
\phi_{\Gth}=-\nth{n}(\Gg(m,n)\sin(\Hat{m}z)-\Gg(m+2,n)\sin(\Hat{m+2}z))\sin(n\Gth),
\end{cases}
\end{equation}
\[
\Gg(m,n)=\nth{\Hat{m}}+\frac{\GL\Hat{m}}{(\GL+2)n^{2}}.
\]
Substituting this into $\mathfrak{K}_{0}$ we obtain
\[
\lim_{h\to 0}\frac{\mathfrak{K}_{0}(h,\BGf)}{h}=
\lim_{h\to 0}\frac{\mu}{2}\sqrt{\frac{\GL+1}{3}}\left(2+
\frac{(\Hat{m+2})^{2}}{\Hat{m}^{2}}+\frac{\Hat{m}^{2}}{(\Hat{m+2})^{2}}\right).
\]
We conclude that
\[
\lim_{h\to 0}\frac{\mathfrak{K}_{0}(h,\BGf)}{h}=\lim_{h\to 0}\frac{\Gl(h)}{h}=
2\mu\sqrt{\frac{\GL+1}{3}},
\]
since
\[
\lim_{h\to 0}\frac{(\Hat{m+2})^{2}}{\Hat{m}^{2}}=1.
\]

\begin{figure}[t]
 \centering
\includegraphics[scale=0.5]{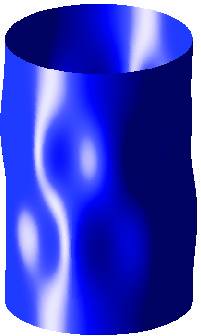}\hspace{8ex}
 \includegraphics[scale=0.5]{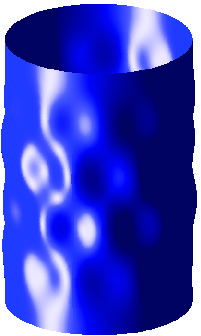}\hspace{8ex}
\includegraphics[scale=0.5]{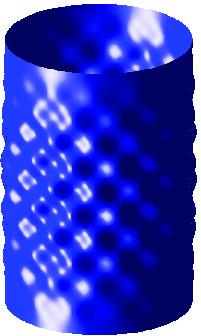}
 \caption{Buckling mode (\ref{X0lin}), (\ref{fixedbcmode}) corresponding, left to right, to
   $m(h)\sim h^{-1/8}$, $h^{-1/4}$ and $h^{-3/8}$.}
 \label{fig:fixedbc}
\end{figure}
Thus, the test functions (\ref{fixedbcmode}) are buckling modes for any $m(h)$
satisfying (\ref{m(h)}). Figure~\ref{fig:fixedbc} shows the buckling mode
(\ref{fixedbcmode}) for
\[
\Hat{m(h)}=\left(\frac{4\sqrt{3(\GL+1)}}{h(\GL+2)}\right)^{\Ga},\quad\Ga=1/8,\
1/4, 3/8.
\]

\section{Discussion}
\label{sec:conc}
The key observation in our analysis is that for the test functions $\BU^{h}$
(\ref{ansatz0}) we have
\[
\mathfrak{S}_{h}(\BU^{h})=O(\|e(\BU^{h})\|^{2})=O(K(V_{h})\|\Grad\BU^{h}\|^{2}).
\] 
However, the asymptotics of the destabilizing compressiveness term
\[
\mathfrak{C}_{h}(\BU^{h})=\int_{\GO_{h}}(\BGs_{h},(\Grad\BU^{h})^{T}\Grad\BU^{h})d\Bx
\]
depends strongly on the structure of the tensor
\[
\BGs^{0}(\Gth,z)=\lim_{h\to 0}\BGs_{h}(r,\Gth,z),
\]
We saw that for the perfect cylinder $\BGs^{0}=\BGs_{h}$ is given by
(\ref{perfectstr}) and hence
\[
\mathfrak{C}_{h}(\BU^{h})=O\left(\frac{\|e(\BU^{h})\|^{2}}{h}\right),
\]
If we assume that
\begin{equation}
  \label{exp}
\BGs_{h}(r,\Gth,z;h)=\BGs^{0}(\Gth,z)+h\BGt(\Gth,z)+(r-1)\BGs^{1}(\Gth,z)+O(h^{2}).  
\end{equation}
Substituting it into the equilibrium equation $\Div\BGs^{h}=\Bzr$ and passing
to the limit as $h\to 0$, we obtain
\begin{equation}
  \label{limequil}
  \begin{cases}
  \Gs^{1}_{rr}+\Gs^{0}_{r\Gth,\Gth}+\Gs^{0}_{rr}-\Gs^{0}_{\Gth\Gth}+\Gs^{0}_{rz,z}=0,\\
\Gs^{1}_{r\Gth}+\Gs^{0}_{\Gth\Gth,\Gth}+2\Gs^{0}_{r\Gth}+\Gs^{0}_{\Gth z,z}=0,\\
\Gs^{1}_{rz}+\Gs^{0}_{\Gth z,\Gth}+\Gs^{0}_{rz}+\Gs^{0}_{zz,z}=0.
  \end{cases}
\end{equation}
The traction-free \bc on the lateral boundary of the shell $r=1\pm h/2$ implies that
\[
\BGs^{0}(\Gth,z)\Be_{r}=\BGs^{1}(\Gth,z)\Be_{r}=\BGt(\Gth,z)\Be_{r}=\Bzr
\]
for all $(\Gth,z)\in\bb{T}\times(0,L)$. Substituting these equations into
(\ref{limequil}) we obtain
\[
\Gs_{rr}^{0}=0,\quad\Gs_{r\Gth}^{0}=0,\quad\Gs_{rz}^{0}=0,\quad\Gs^{0}_{\Gth\Gth}=0.
\]
We also obtain $\Gs^{0}_{\Gth z,z}=0$ and $\Gs^{0}_{\Gth z,\Gth}+\Gs^{0}_{zz,z}=0$.
Solving these equations results in the following form for $\BGs^{0}$:
\begin{equation}
  \label{linstr}
\BGs^{0}(\Gth,z)=\left[
  \begin{array}{ccc}
    0 & 0 & 0\\
    0 & 0 & s(\Gth)\\
    0 & s(\Gth) & t(\Gth)-zs'(\Gth)
  \end{array}
\right].
\end{equation}
for some functions $s(\Gth)$ and $t(\Gth)$. For generic choices of $s(\Gth)$
and $t(\Gth)$ we obtain
\[
\mathfrak{C}_{h}(\BU^{h})=O\left(\frac{\|e(\BU^{h})\|^{2}}{h^{5/4}}\right),
\]
resulting in $\Gl(h)=O(h^{5/4})$. One might conjecture that the imperfections
of load can produce a non-homogeneous trivial branch leading to $\BGs^{0}$
given by (\ref{linstr}) and hence to the dramatic change in the asymptotic
behavior of $\Gl(h)$. 

If we disregard the calculations leading to (\ref{linstr}) and assume for a
moment that $\Gs^{0}_{\Gth\Gth}\not=0$ then
\[
\mathfrak{C}_{h}(\BU^{h})=O\left(\frac{\|e(\BU^{h})\|^{2}}{h^{3/2}}\right)=
O\left(\frac{\|e(\BU^{h})\|^{2}}{K(V_{h})}\right).
\]
It may be conjectured that imperfection of shape can be mathematically
described by such tensor $\BGs^{0}$. In this case the critical load has the
asymptotics $\Gl(h)\sim K(V_{h})=O(h^{3/2})$. We note that the exponents
$5/4=1.25$ and $3/2=1.5$ are close the upper and lower limits of
experimentally determined behavior of the buckling load \cite{call2000,hlp03}.

Observe that $\mathfrak{C}_{h}(\BGf)$
cannot be larger than $\|e(\BGf)\|^{2}/K(V_{h})$. Therefore, if the predicted
buckling load $\Gl(h)\sim K(V_{h})$ (Euler buckling in the terminology of
\cite{grtr07}) then the imperfections of load and shape will have
negligible effect on the buckling load as in the case of straight solid struts
and flat plates. 

\medskip

\noindent\textbf{Acknowledgments.}  We are grateful to Lev Truskinovsky for
reading the entire manuscript and suggesting many improvements in the
exposition. This material is based upon work supported by the National Science
Foundation under Grants No. 1008092.

\appendix

\section{Proof of Theorem~\ref{th:KI}}
\setcounter{equation}{0}
\label{app:KI}
The proof of Theorem~\ref{th:KI} is quite involved and will be split into
several relatively simple steps.

\subsection{Zero \bc on a rectangle}
\label{sub:zbc-rect}
For any vector field $\BU=(u,v)$ on $\GO=[0,h]\times[0,L]$ and any
$\Ga\in[-1,1]$ we define
\[
\BG_{\Ga}=\mat{u_{x}}{u_{y}}{v_{x}}{v_{y}+\Ga u},\qquad
\Be_{\Ga}=\hf(\BG_{\Ga}+\BG_{\Ga}^{T})=\mat{u_{x}}{\hf(u_{y}+v_{x})}{\hf(u_{y}+v_{x})}{v_{y}+\Ga u}.
\]
\begin{theorem}
  \label{th:basicineq}
Suppose that the vector field $\BU=(u,v)\in C^{1}(\bra{\GO};\bb{R}^{2})$ satisfies
$u(x,0)=u(x,L)=\Bzr$. Then for
any $\Ga\in[-1,1]$, any $h\in(0,1)$ and any $L>0$
\[
\|\BG_{\Ga}\|^{2}\le100\|\Be_{\Ga}\|\left(\frac{\|u\|}{h}+\|\Be_{\Ga}\|\right).
\]
\end{theorem}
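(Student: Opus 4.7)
The plan is to reduce the claim to bounding the scalar skew-symmetric part $\omega = (u_y - v_x)/2$. Since $\BG_\alpha$ and $\Be_\alpha$ share the same symmetric part, a direct pointwise computation gives $|\BG_\alpha|^2 = |\Be_\alpha|^2 + 2\omega^2$, whence $\|\BG_\alpha\|^2 = \|\Be_\alpha\|^2 + 2\|\omega\|^2$. It therefore suffices to prove
$$\|\omega\|^2 \leq 50\,\|\Be_\alpha\|\bigl(\|u\|/h + \|\Be_\alpha\|\bigr).$$

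My main tool would be Fourier decomposition in $y$. The Dirichlet condition $u(x,0) = u(x,L) = 0$ forces $u$ to expand in a pure sine series $u(x,y) = \sum_{k\geq 1} u_k(x)\sin(k\pi y/L)$, while $v$ admits the full expansion $v(x,y) = v_0(x) + \sum_{k\geq 1}[a_k(x)\cos(k\pi y/L) + b_k(x)\sin(k\pi y/L)]$. A short check shows that each of $\|\Be_\alpha\|^2$, $\|\omega\|^2$, and $\|u\|^2$ diagonalizes in this basis, so by Parseval, and Cauchy--Schwarz over $k$ applied to the cross-term $\|\Be_\alpha\|\|u\|$, it is enough to establish the desired estimate modewise with a universal constant. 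The zeroth mode is immediate, since $\omega_0 = -v_0'/2$ and $\|\Be_\alpha^{(0)}\|^2 = 2\|\omega_0\|^2$.

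For each $k \geq 1$, setting $\beta = k\pi/L$ and $(p,q,r) = (u_k, a_k, b_k)$, the modewise claim reduces to a one-dimensional inequality on $[0,h]$, with LHS proportional to $\int_0^h[(\beta p - q')^2 + (r')^2]\,dx$ and the strain-energy quantity
$$S = \int_0^h\bigl[(p')^2 + (\beta q - \alpha p)^2 + \beta^2 r^2 + \tfrac12(\beta p + q')^2 + \tfrac12(r')^2\bigr]dx.$$
The $(r')^2$ part is absorbed by $S$ immediately. For the main piece I would use the algebraic identity $(\beta p - q')^2 = (\beta p + q')^2 - 4\beta pq'$ and integrate the cross term by parts in $x$, producing a bulk contribution $4\beta\int p'q\,dx$ — bounded by Cauchy--Schwarz with $\|p'\|\leq\sqrt S$ and $\beta\|q\|\leq \sqrt S + \|p\|$ (the latter obtained from the $(\beta q - \alpha p)^2$ summand in $S$ together with $|\alpha|\leq 1$) — and a boundary contribution $-4\beta[pq]_0^h$.

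The main obstacle is the boundary contribution $\beta|p(x_0)q(x_0)|$ at $x_0\in\{0,h\}$, where no natural Dirichlet data is available on $p$ or $q$. My plan is to apply the elementary trace estimate $|f(x_0)|^2 \leq (2/h)\|f\|^2 + h\|f'\|^2$ (obtained from $f(x_0)^2 - f(x)^2 = 2\int_x^{x_0} ff_s\,ds$ and averaging in $x$) to both $p$ and $q$, then trade $\|q\|$ and $\|q'\|$ back to $S$-quantities and $\|p\|$ via the bounds already used in the bulk step. The uncancelled residual takes precisely the form $\|p\|^2/h$ and $\sqrt S\,\|p\|/h$, matching the target RHS once rescaled through $\|u_k\|^2 = (L/2)\|p\|^2$. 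The hypothesis $h<1$ is then used to absorb leftover $h$-factors coming from high-$\beta$ corrections, and $|\alpha|\leq 1$ keeps the $\alpha u_x$ correction harmless throughout. Tracking the constants through the reduction yields the universal bound $100$ stated in the theorem.
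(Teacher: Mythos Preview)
Your reduction to the rotation $\omega$ and the idea of a Fourier decomposition in $y$ are natural, but there are two genuine problems.

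First, a minor one: your expansion of $v$ in \emph{both} sines and cosines does not diagonalize the quadratic forms on $[0,L]$. For $k+j$ odd one has $\int_0^L\sin(k\pi y/L)\cos(j\pi y/L)\,dy\neq 0$, so the cross terms between the $a_k$-block of mode $k$ and the $b_j$-block of mode $j$ survive in $\|\Be_\alpha\|^2$. The remedy is easy: since $v$ carries no boundary condition, expand it in cosines only (equivalently, extend $u$ oddly and $v$ evenly to $[-L,L]$); then $r=b_k\equiv 0$ and the diagonalization you assert does hold.

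The serious gap is the boundary term. After your integration by parts you must control $\beta\,|p(x_0)q(x_0)|$ on $x_0\in\{0,h\}$ uniformly in $\beta=k\pi/L$. Your trace inequality feeds in $\|q'\|$, and the only available bound is $\|q'\|\le\|\beta p+q'\|+\beta\|p\|\le\sqrt{2S}+\beta\|p\|$, which reintroduces an uncontrolled factor $\beta$. Tracking this through your estimate produces terms like $\beta\sqrt{S}\,\|p\|$ and $\beta^2\|p\|^2$ that are \emph{not} dominated by $\sqrt{S}\,(\|p\|/h+\sqrt{S})$ with a $k$-independent constant. The hypothesis $h<1$ does nothing here: the obstruction is large $\beta$, not small $h$. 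The modewise inequality is in fact true, but proving it requires isolating the part of $p$ that carries the boundary data, and this is exactly what the paper does.

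The paper's argument avoids Fourier in $y$ altogether. It writes $u=w+(u-w)$ with $w$ the harmonic extension of $u|_{\partial\Omega}$; then $u-w\in H^1_0(\Omega)$, so an energy estimate for $\Delta(u-w)=\Delta u=(e_{11}-e_{22})_x+2(e_{12})_y+\alpha e_{11}$ gives $\|\nabla(u-w)\|\le C\|\Be_\alpha\|$ with \emph{no} boundary contribution. The harmonic part is handled by the sharp inequality $\|w_y\|^2\le(2\sqrt{3}/h)\|w\|\|w_x\|+\|w_x\|^2$, proved by explicit computation on each Fourier mode $(A_ne^{\beta_n x}+B_ne^{-\beta_n x})\sin(\beta_n y)$. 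This splitting is what absorbs the lateral boundary data that your trace estimate cannot handle.
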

We emphasize that there are no boundary conditions imposed on $v(x,y)$.
\begin{proof}
First we prove several auxiliary lemmas.
\begin{lemma}
  \label{lem:harmon}
Suppose $w(x,y)$ is harmonic in $[0,h]\times[0,L]$, and satisfies
$w(x,0)=w(x,L)=0$. Then
\begin{equation}
  \label{hi}
\|w_{y}\|^{2}\le\frac{2\sqrt{3}}{h}\|w\|\|w_{x}\|+\|w_{x}\|^{2}.
\end{equation}
\end{lemma}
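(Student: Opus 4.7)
The plan is to reduce the two-dimensional estimate to a one-dimensional estimate for the Fourier modes of $w$ in the $y$ variable. Since $w(x,0)=w(x,L)=0$, expand
\[
w(x,y)=\sum_{n\ge 1}\alpha_{n}(x)\sin(\lambda_{n}y),\qquad\lambda_{n}=\frac{n\pi}{L},
\]
so that $\Delta w=0$ forces $\alpha_{n}''=\lambda_{n}^{2}\alpha_{n}$. By Parseval on $[0,L]$ and the discrete Cauchy--Schwarz inequality applied to $\sum_{n}\|\alpha_{n}\|_{L^{2}(0,h)}\,\|\alpha_{n}'\|_{L^{2}(0,h)}$, the desired inequality follows from the mode-wise bound
\[
\lambda_{n}^{2}\|\alpha_{n}\|^{2}\le\frac{2\sqrt{3}}{h}\,\|\alpha_{n}\|\,\|\alpha_{n}'\|+\|\alpha_{n}'\|^{2}.
\]

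The key observation is that the quantity $\lambda_{n}^{2}\alpha_{n}(x)^{2}-\alpha_{n}'(x)^{2}$ is conserved in $x$: its derivative is $2\alpha_{n}'(\lambda_{n}^{2}\alpha_{n}-\alpha_{n}'')=0$. Integrating over $[0,h]$ gives $\lambda_{n}^{2}\|\alpha_{n}\|^{2}-\|\alpha_{n}'\|^{2}=h\kappa_{n}$ for some constant $\kappa_{n}$. When $\kappa_{n}\le 0$ the bound is immediate, since then the left-hand side is already dominated by $\|\alpha_{n}'\|^{2}$. When $\kappa_{n}>0$ the solution admits the normal form $\alpha_{n}(x)=c_{n}\cosh(\lambda_{n}x+\tau_{n})$ with $\kappa_{n}=\lambda_{n}^{2}c_{n}^{2}$, and a direct computation of the $L^{2}$ integrals, followed by the elementary bound $\cosh^{2}(\sigma_{n}+2\tau_{n})\ge 1$ that isolates the worst case $\tau_{n}=-\sigma_{n}/2$, reduces the mode-wise estimate to the purely numerical claim
\[
\sigma^{4}\le 3\bigl(\sinh^{2}\sigma-\sigma^{2}\bigr)\qquad\text{for all }\sigma\ge 0,\qquad\sigma:=\lambda_{n}h.
\]

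This scalar inequality is the one spot that actually needs a calculation. I would verify it by setting $g(\sigma)=3(\sinh^{2}\sigma-\sigma^{2})-\sigma^{4}$ and checking that $g(0)=g'(0)=g''(0)=g'''(0)=0$ while $g^{(4)}(\sigma)=24(\cosh(2\sigma)-1)\ge 0$, so that successive integration gives $g\ge 0$ on $[0,\infty)$. The Taylor expansion $\sinh^{2}\sigma-\sigma^{2}=\sigma^{4}/3+O(\sigma^{6})$ shows the constant $2\sqrt{3}$ is sharp as $\sigma\to 0^{+}$, which reflects the low-frequency regime where the mode is close to an affine function.

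The main obstacle is conceptual rather than technical: spotting the conserved quantity $\lambda_{n}^{2}\alpha_{n}^{2}-\alpha_{n}'^{2}$, which captures exactly the part of $\lambda_{n}^{2}\|\alpha_{n}\|^{2}$ that is not controlled by $\|\alpha_{n}'\|^{2}$, and recognizing that the extremal phase is the centered one $\tau_{n}=-\sigma_{n}/2$. Everything else then assembles mechanically via Parseval and Cauchy--Schwarz.
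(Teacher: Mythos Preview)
Your proof is correct and follows essentially the same route as the paper: Fourier expansion in $y$, reduction to the scalar inequality $\sigma^{4}\le 3(\sinh^{2}\sigma-\sigma^{2})$ (which the paper writes as $\Phi(\tau)=\tau^{4}/(\sinh^{2}\tau-\tau^{2})\le 3$), and the same identification of the centered $\cosh$ as the extremal mode. Your conserved-quantity observation $\lambda_{n}^{2}\alpha_{n}^{2}-(\alpha_{n}')^{2}\equiv\kappa_{n}$ is a clean repackaging of the paper's explicit computation with the exponential coefficients $A_{n},B_{n}$ (indeed $\kappa_{n}=4\lambda_{n}^{2}A_{n}B_{n}$), and your mode-wise estimate followed by Cauchy--Schwarz across modes replaces the paper's Cauchy--Schwarz inside the Fourier sums, but the substance is the same.
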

\begin{proof}
If $w(x,y)$ is harmonic and satisfies $w(x,0)=w(x,L)=0$ then it must have the
expansion
\[
w(x,y)=\sum_{n=1}^{\infty}(A_{n}e^{\frac{\pi nx}{L}}+B_{n}e^{-\frac{\pi nx}{L}})
\sin\left(\frac{\pi n y}{L}\right).
\]
Therefore,
\[
\|w\|^{2}=\frac{Lh}{2}\sum_{n=1}^{\infty}\left\{\psi\left(\frac{\pi nh}{L}\right)
\left(A_{n}^{2}e^{\frac{\pi nh}{L}}+B_{n}^{2}e^{\frac{-\pi nh}{L}}\right)+2A_{n}B_{n}\right\},\qquad
\psi(x)=\frac{\sinh(x)}{x}.
\]
In the expansion of $w_{y}$ we simply multiply $A_{n}$ and $B_{n}$ by $\pi
n/L$, while in the expansion of $w_{x}$ we multiply $A_{n}$ by $\pi n/L$ and
$B_{n}$ by $-\pi n/L$:
\[
\|w_{y}\|^{2}=\frac{Lh}{2}\sum_{n=1}^{\infty}\frac{\pi^{2}n^{2}}{L^{2}}\left\{\psi\left(\frac{\pi nh}{L}\right)
\left(A_{n}^{2}e^{\frac{\pi nh}{L}}+B_{n}^{2}e^{\frac{-\pi nh}{L}}\right)+2A_{n}B_{n}\right\},
\]
\[
\|w_{x}\|^{2}=\frac{Lh}{2}\sum_{n=1}^{\infty}\frac{\pi^{2}n^{2}}{L^{2}}\left\{\psi\left(\frac{\pi nh}{L}\right)
\left(A_{n}^{2}e^{\frac{\pi nh}{L}}+B_{n}^{2}e^{\frac{-\pi nh}{L}}\right)-2A_{n}B_{n}\right\},
\]
The numbers $A_{n}$ and $B_{n}$ can be arbitrary, but such that all the series
converge. We can therefore change variables
\[
a_{n}=A_{n}e^{\frac{\pi nh}{2L}},\qquad b_{n}=B_{n}e^{-\frac{\pi nh}{2L}},\qquad\tau_{n}=\frac{\pi nh}{L}
\]
Then
\[
\frac{\|w\|^{2}}{h^{2}}=\frac{Lh}{2}\sum_{n=1}^{\infty}\frac{\pi^{2}n^{2}}{\tau_{n}^{2}L^{2}}
\{(\psi(\tau_{n})-1)(a_{n}^{2}+b_{n}^{2})+(a_{n}+b_{n})^{2}\},
\]
\[
\|w_{y}\|^{2}=\frac{Lh}{2}\sum_{n=1}^{\infty}\frac{\pi^{2}n^{2}}{L^{2}}\{(\psi(\tau_{n})-1)
(a_{n}^{2}+b_{n}^{2})+(a_{n}+b_{n})^{2}\},
\]
\[
\|w_{x}\|^{2}=\frac{Lh}{2}\sum_{n=1}^{\infty}\frac{\pi^{2}n^{2}}{L^{2}}\{(\psi(\tau_{n})-1)
(a_{n}^{2}+b_{n}^{2})+(a_{n}-b_{n})^{2}\},
\]
Obviously,
\[
\|w_{y}\|^{2}-\|w_{x}\|^{2}=2Lh\sum_{n=1}^{\infty}\frac{\pi^{2}n^{2}}{L^{2}}a_{n}b_{n}\le
2Lh\sum_{n\in\CP}\frac{\pi^{2}n^{2}}{L^{2}}a_{n}b_{n},\qquad\CP=\{n\in\bb{N}:a_{n}b_{n}>0.\}.
\]
Next we estimate
\[
\frac{\|w\|^{2}}{h^{2}}\ge\frac{Lh}{2}\sum_{n\in\CP}\frac{\pi^{2}n^{2}}{\tau_{n}^{2}L^{2}}
\{(\psi(\tau_{n})-1)(a_{n}^{2}+b_{n}^{2})+(a_{n}+b_{n})^{2}\}\ge
Lh\sum_{n\in\CP}\frac{\pi^{2}n^{2}}{\tau_{n}^{2}L^{2}}(\psi(\tau_{n})+1)a_{n}b_{n}.
\]
Similarly,
\[
\|w_{x}\|^{2}\ge Lh\sum_{n\in\CP}\frac{\pi^{2}n^{2}}{L^{2}}
(\psi(\tau_{n})-1)a_{n}b_{n}.
\]
Now we have
\[
\sum_{n\in\CP}\frac{\pi^{2}n^{2}}{L^{2}}a_{n}b_{n}=
\sum_{n\in\CP}\left(\frac{\pi n}{L}\sqrt{(\psi(\tau_{n})-1)a_{n}b_{n}}\right)
\left(\frac{\pi n}{L}\sqrt{\frac{a_{n}b_{n}}{\psi(\tau_{n})-1}}\right).
\]
Applying the Cauchy-Schwartz inequality we obtain
\[
\sum_{n\in\CP}\frac{\pi^{2}n^{2}}{L^{2}}a_{n}b_{n}\le
\sqrt{\sum_{n\in\CP}\frac{\pi^{2}n^{2}}{L^{2}}
(\psi(\tau_{n})-1)a_{n}b_{n}}\sqrt{\sum_{n\in\CP}\Phi(\tau_{n})
\frac{\pi^{2}n^{2}}{\tau_{n}^{2}L^{2}}(\psi(\tau_{n})+1)a_{n}b_{n}},
\]
where
\[
\Phi(\tau)=\frac{\tau^{2}}{\psi(\tau)^{2}-1}=\frac{\tau^{4}}{\sinh^{2}(\tau)-\tau^{2}}.
\]
The function $\Phi(\tau)$ is monotone decreasing on $(0,+\infty)$, and hence,
$\Phi(\tau_{n})\le\Phi(\tau_{1})\le\Phi(0)=3$. Therefore,
\begin{equation}
  \label{shi}
\|w_{y}\|^{2}-\|w_{x}\|^{2}\le\frac{2\sqrt{\Phi(\pi h/L)}}{h}\|w\|\|w_{x}\|,
\end{equation}
and the inequality (\ref{hi}) follows.
The inequality (\ref{shi}) is sharp, since it turns into equality for
\[
w(x,y)=\cosh\left(\frac{\pi}{L}\left(x-\frac{h}{2}\right)\right)
\sin\left(\frac{\pi y}{L}\right).
\]
\end{proof}
Suppose $w(x,y)$ solves
\begin{equation}
  \label{harmon}
  \begin{cases}
  \GD w(x,y)=0,&(x,y)\in\GO\\
  w(x,y)=u(x,y),&(x,y)\in\dOm,
\end{cases}
\end{equation}
where $\GO=[0,h]\times[0,L]$. Then $\Grad w$ is the Helmholtz projection of
$\Grad u$ onto the space of the divergence-free fields in
$L^{2}(\GO;\bb{R}^{2})$.
\begin{lemma}
  \label{lem:main}
Suppose that the vector field $\BU=(u,v)\in C^{1}(\bra{\GO};\bb{R}^{2})$ satisfies
$\BU(x,0)=\BU(x,L)=\Bzr$. Let $w(x,y)$ be defined by (\ref{harmon}). Then for
any $\Ga\in[-1,1]$, any $h\in(0,1)$ and any $L>0$
\begin{equation}
  \label{mainest}
  \|\Grad u-\Grad w\|\le\left(\sqrt{2}+\nth{\pi}\right)\|\Be_{\Ga}\|,\qquad
\|u-w\|\le\frac{h}{\pi}\left(\sqrt{2}+\nth{\pi}\right)\|\Be_{\Ga}\|.
\end{equation}
\end{lemma}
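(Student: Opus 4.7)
The plan is to set $\phi = u - w$, which lies in $H^{1}_{0}(\GO)$ since $w = u$ on $\dOm$. Because $w$ is harmonic, $\int_{\GO}\Grad w\cdot\Grad\phi\,dA = 0$, and hence
$$
\|\Grad\phi\|^{2} = \int_{\GO}\Grad u\cdot\Grad\phi\,dA.
$$
Expanding the right-hand side and substituting $u_{x} = (\Be_{\Ga})_{11}$, $u_{y} = 2(\Be_{\Ga})_{12} - v_{x}$, and $v_{y} = (\Be_{\Ga})_{22} - \Ga u$ reduces the proof to controlling the single off-diagonal term $-\int_{\GO} v_{x}\phi_{y}\,dA$ in terms of $\Be_{\Ga}$ alone.

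The pivotal step is the null-Lagrangian identity
$$
\int_{\GO}(v_{x}\phi_{y} - v_{y}\phi_{x})\,dA = 0,
$$
which follows from $v_{x}\phi_{y} - v_{y}\phi_{x} = \partial_{x}(v\phi_{y}) - \partial_{y}(v\phi_{x})$ and the vanishing of the associated boundary integral: on the vertical sides $x = 0, h$ the condition $\phi = 0$ forces $\phi_{y} = 0$ there, and on the horizontal sides $y = 0, L$ it forces $\phi_{x} = 0$. (Any loss of classical regularity of $w$ at the corners of $\GO$ is absorbed by a standard density argument.) Combined with $v_{y} = (\Be_{\Ga})_{22} - \Ga u$, this yields
$$
\|\Grad\phi\|^{2} = \int_{\GO}\bigl[(\Be_{\Ga})_{11} - (\Be_{\Ga})_{22}\bigr]\phi_{x}\,dA + 2\int_{\GO}(\Be_{\Ga})_{12}\phi_{y}\,dA + \Ga\int_{\GO} u\phi_{x}\,dA.
$$

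The first two terms are bounded by $\sqrt{2}\,\|\Be_{\Ga}\|\,\|\Grad\phi\|$ by pointwise Cauchy--Schwarz together with the elementary inequality $((\Be_{\Ga})_{11} - (\Be_{\Ga})_{22})^{2} + 4(\Be_{\Ga})_{12}^{2} \le 2|\Be_{\Ga}|^{2}$. For the third term, integration by parts using $\phi|_{\dOm} = 0$ rewrites it as $-\Ga\int_{\GO}(\Be_{\Ga})_{11}\phi\,dA$; then $|\Ga|\le 1$, Cauchy--Schwarz, the Poincar\'e inequality $\|\phi\|\le(h/\pi)\|\phi_{x}\|$ (applicable since $\phi$ vanishes at $x = 0$ and $x = h$), and the hypothesis $h < 1$ bound it by $(1/\pi)\|\Be_{\Ga}\|\|\Grad\phi\|$. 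Dividing the resulting inequality $\|\Grad\phi\|^{2} \le (\sqrt{2} + 1/\pi)\|\Be_{\Ga}\|\|\Grad\phi\|$ by $\|\Grad\phi\|$ proves the first estimate in (\ref{mainest}), and a further application of the same Poincar\'e inequality delivers the second.

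The main anticipated obstacle is the rigorous justification of the null-Lagrangian identity and the companion integrations by parts, since $w$ need not be $C^{2}$ up to the corners of $\GO$. This is expected to be routine via smooth approximation of $\BU$ combined with the continuity of all the relevant bilinear forms on $H^{1}(\GO)\times H^{1}(\GO)$.
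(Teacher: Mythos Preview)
Your proof is correct and arrives at exactly the same integral identity as the paper,
\[
\|\Grad\phi\|^{2}=\int_{\GO}\bigl[(\Be_{\Ga})_{11}-(\Be_{\Ga})_{22}\bigr]\phi_{x}\,dA
+2\int_{\GO}(\Be_{\Ga})_{12}\phi_{y}\,dA+\Ga\int_{\GO}(\Be_{\Ga})_{11}(w-u)\,dA,
\]
followed by the identical Cauchy--Schwarz and Poincar\'e estimates. The only organisational difference is how the identity is reached: the paper writes $\GD u$ directly in the divergence form $\GD u=(e_{11}-e_{22})_{x}+2(e_{12})_{y}+\Ga e_{11}$ and integrates by parts once against $\phi$, whereas you start from $\|\Grad\phi\|^{2}=\int\Grad u\cdot\Grad\phi$ and invoke the null-Lagrangian identity $\int(v_{x}\phi_{y}-v_{y}\phi_{x})=0$ to handle the cross-term. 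These are two presentations of the same integration-by-parts computation, so the approaches are essentially identical.
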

\begin{proof}
This follows your note with tighter constants.
\[
\GD(u-w)=\GD u=(e_{11}-e_{22})_{x}+2(e_{12})_{y}+\Ga e_{11}.
\]
Multiplying by $u-w$ and integrating we get
\[
\|\Grad(u-w)\|^{2}=\int_{\GO}\{(e_{11}-e_{22})(u-w)_{x}+2e_{12}(u-w)_{y}+\Ga
e_{11}(w-u)\}d\Bx
\]
By Cauchy-Schwartz inequality we get
\[
\|\Grad(u-w)\|^{2}\le\|\Be_{\Ga}\|(\sqrt{2}\|\Grad(u-w)\|+|\Ga|\|u-w\|).
\]
By Poincar\'e
\[
\int_{0}^{h}|u-w|^{2}dx\le\frac{h^{2}}{\pi^{2}}\int_{0}^{h}|(u-w)_{y}|^{2}dx.
\]
Hence,
\[
\|u-w\|\le\frac{h}{\pi}\|\Grad(u-w)\|,
\]
and (\ref{mainest}) follows.
\end{proof}
We are now ready to prove the theorem. For simplicity of notation we denote
\[
K_{0}=\frac{1}{\pi}\left(\sqrt{2}+\nth{\pi}\right).
\]
By the triangle inequality and Lemma~\ref{lem:main} we get
\begin{multline*}
  \|\BG_{\Ga}\|^{2}=\|\Be_{\Ga}\|^{2}+\hf\|v_{x}-u_{y}\|^{2}=\|\Be_{\Ga}\|^{2}+
\hf\|(v_{x}+u_{y})-2(u_{y}-w_{y})-2w_{y}\|^{2}\le\\
\|\Be_{\Ga}\|^{2}+\frac{3}{2}\|u_{y}+v_{x}\|^{2}+6\|u_{y}-w_{y}\|^{2}+6\|w_{y}\|^{2}\le
(4+6\pi^{2}K_{0}^{2})\|\Be_{\Ga}\|^{2}+6\|w_{y}\|^{2}.
\end{multline*}
To estimate $\|w_{y}\|$ via Lemma~\ref{lem:harmon} we
have by the triangle inequality and Lemma~\ref{lem:main}
\[
\|w\|\le\|u\|+\|u-w\|\le\|u\|+K_{0}h\|\Be_{\Ga}\|,
\]
\[
\|w_{x}\|\le\|u_{x}\|+\|w_{x}-u_{x}\|\le(1+\pi K_{0})\|\Be_{\Ga}\|.
\]
Therefore,
\[
\|w_{y}\|^{2}\le\frac{2\sqrt{3}(1+\pi K_{0})}{h}\|u\|\|\Be_{\Ga}\|+
(1+\pi K_{0})(1+(2\sqrt{3}+\pi)K_{0})\|\Be_{\Ga}\|^{2}.
\]
Thus,
\[
\|\BG_{\Ga}\|^{2}\le\frac{12\sqrt{3}(1+\pi K_{0})}{h}\|u\|\|\Be_{\Ga}\|+K_{1}\|e_{\Ga}\|^{2},
\]
where
\[
K_{1}=4+6\pi^{2}K_{0}^{2}+6(1+\pi K_{0})(1+(2\sqrt{3}+\pi)K_{0}).
\]
Rounding the constants up to the next integer we obtain
\[
\|\BG_{\Ga}\|^{2}\le99\|\Be_{\Ga}\|^{2}+\frac{57}{h}\|u\|\|\Be_{\Ga}\|.
\]
The theorem is proved now.
\end{proof}

\subsection{Periodic \bc s on a rectangle}
\label{sub:pbc-rect}
\begin{theorem}
\label{th:pbc}
Suppose that the vector field $\BU=(u,v)\in C^{1}([0,h]\times[0,2\pi];\bb{R}^{2})$ satisfies
$u(x,0)=u(x,2\pi)$. Then there exists an absolute numerical constant $C_{0}>0$
such that for
any $\Ga\in[-1,1]$ and any $h\in(0,1)$
\[
\|\BG_{\Ga}\|^{2}\le C_{0}\|\Be_{\Ga}\|\left(\frac{\|u\|}{h}+\|\Be_{\Ga}\|\right).
\]
\end{theorem}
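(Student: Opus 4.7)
The plan is to adapt the three-step argument of Theorem~\ref{th:basicineq} to the periodic setting. The hypothesis $u(x,0)=u(x,2\pi)$ is interpreted in the natural torus sense, so that both components of $\BU$ are smooth in $y\in\bb{T}$---matching the intended shell application, where $y$ plays the role of the angular coordinate $\Gth$. I would construct the harmonic projection $w$ on $\GO=[0,h]\times[0,2\pi]$ as the unique solution of $\GD w=0$ with $w(0,y)=u(0,y)$, $w(h,y)=u(h,y)$, and $w$ $2\pi$-periodic in $y$. The error $\phi=u-w$ then vanishes identically at $x=0,h$ and is $y$-periodic.

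For the periodic analog of Lemma~\ref{lem:main}, I would use the identity $\GD u=(e_{11}-e_{22})_{x}+2(e_{12})_{y}+\Ga e_{11}$, multiply by $\phi$, and integrate by parts over $\GO$. The boundary terms at $x=0,h$ vanish by the Dirichlet condition on $\phi$, while those at $y=0,2\pi$ cancel in pairs by $y$-periodicity. Cauchy--Schwartz then yields $\|\Grad\phi\|\le\sqrt{2}\|\Be_{\Ga}\|+|\Ga|\|\phi\|$, and Poincar\'e in the $x$-direction (with constant $h/\pi$, since $\phi=0$ at both $x$-endpoints) gives $\|\phi\|\le(h/\pi)\|\Grad\phi\|$. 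Combining, the two estimates of (\ref{mainest}) are recovered.

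For the periodic analog of Lemma~\ref{lem:harmon}, I would expand the harmonic $w$ in $y$:
\[
w(x,y)=a+bx+\sum_{n\ne 0}(A_{n}e^{|n|x}+B_{n}e^{-|n|x})e^{iny}.
\]
The zero-frequency mode $a+bx$ contributes nothing to $\|w_{y}\|$ and only non-negatively to $\|w\|^{2}$ and $\|w_{x}\|^{2}$, so it may be dropped from the right-hand side of the desired inequality. For $n\ne 0$ the Cauchy--Schwartz computation of Lemma~\ref{lem:harmon} applies verbatim with $\tau_{n}=|n|h$ in place of $\pi nh/L$; since $\tau_{n}\ge h$ for $n\ne 0$ and $\Phi(\tau)=\tau^{4}/(\sinh^{2}\tau-\tau^{2})$ is monotone decreasing with $\Phi(0)=3$, one obtains $\sqrt{\Phi(\tau_{n})}\le\sqrt{3}$, yielding the identical bound $\|w_{y}\|^{2}\le(2\sqrt{3}/h)\|w\|\|w_{x}\|+\|w_{x}\|^{2}$.

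Combining the three steps via the triangle-inequality chain used at the end of the proof of Theorem~\ref{th:basicineq} delivers the claim with an absolute constant $C_{0}$. No serious obstacle is anticipated: the periodic setting is marginally cleaner than the Dirichlet one, since the constant-in-$y$ mode of $w$ decouples harmlessly from the estimate on $\|w_{y}\|$. The only point requiring care is that the periodicity of $u$ and $v$ in $y$ must be read in a sufficiently strong sense for step two's integration by parts to close---a condition satisfied automatically in the torus setting of the application.
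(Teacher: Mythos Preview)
Your approach is correct but takes a genuinely different route from the paper's. The paper does \emph{not} redo the harmonic analysis; instead it reduces the periodic case to the already-established zero-boundary Theorem~\ref{th:basicineq} by a substitution-and-averaging trick. For each $t\in[0,2\pi]$ it sets $\BU_{t}=(u-u(x,t),\,v+\Ga y\,u(x,t))$ on the shifted window $[0,h]\times[t,t+2\pi]$; periodicity of $u$ alone makes the first component vanish at both horizontal edges, so Theorem~\ref{th:basicineq} applies. The error terms $\BG_{\Ga}(\BU_{t})-\BG_{\Ga}$ and $\Be_{\Ga}(\BU_{t})-\Be_{\Ga}$ involve only $u_{x}(x,t)$ and $u(x,t)$; integrating the resulting inequality in $t$ and applying Cauchy--Schwarz absorbs these terms into $\|\Be_{\Ga}\|$ and $\|u\|$.

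Your direct Fourier argument is cleaner conceptually, and your handling of the zero mode in the periodic harmonic lemma is correct. The one substantive difference is the hypothesis you actually use: your step-two integration by parts needs $e_{12}=\tfrac12(u_{y}+v_{x})$ to be $y$-periodic, hence periodicity of $v$ (or at least of $v_{x}$), which you import by reinterpreting the assumption ``in the natural torus sense.'' The paper's proof, by contrast, uses only the stated hypothesis $u(x,0)=u(x,2\pi)$---no condition on $v$ whatsoever---because Theorem~\ref{th:basicineq} places no boundary condition on the second component. For the sole application (Theorem~\ref{th:hard}), both components are periodic, so your version suffices there; but as a proof of Theorem~\ref{th:pbc} exactly as stated, the paper's reduction is the sharper argument.
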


\begin{proof}
 For any fixed $t\in[0,2\pi]$ denote $\BU_t=(u-u(x,t), v+\alpha yu(x,t))$ and $\Omega_{t}=[0,h]\times[t,t+2\pi].$ Observe that $\BU_{t}$ satisfies zero boundary conditions on the horizontal boundary of $\Omega_{t}.$ We apply now Theorem ~\ref{th:basicineq} to the displacement $\BU_{t}$ in $\Omega_{t},$
\begin{equation}
\label{3}
\|\BG_\alpha( \BU_{t})\|^2\leq \frac{100}{h}\|e(\BG_\alpha(\BU_{t}))\|\cdot\|u_t\|+100\|e(\BG_\alpha(\BU_{t}))\|^2.
\end{equation}

Note that
$$
\BG_\alpha(\BU_t)=\begin{bmatrix}
u_x-u_x(x,t) & u_y\\
v_x+\alpha yu_x(x,t) & v_y+u\\
\end{bmatrix}=
\BG_\alpha+
\begin{bmatrix}
-u_x(x,t) & 0\\
\alpha yu_x(x,t) & 0\\
\end{bmatrix}
$$
and

$$
e(\BG_\alpha(\BU_t))=\begin{bmatrix}
u_x-u_x(x,t) & \frac{1}{2}(u_y+v_x+\alpha yu_x(x,t))\\
\frac{1}{2}(u_y+v_x+\alpha yu_x(x,t)) & v_y+u\\
\end{bmatrix}=
$$
$$
=\Be_\alpha+
\begin{bmatrix}
-u_x(x,t) & \frac{1}{2}\alpha yu_x(x,t)\\
 \frac{1}{2}\alpha yu_x(x,t) & 0\\
\end{bmatrix},
$$

thus

$$\|\BG_\alpha\|\leq \|\BG_\alpha(\BU_t)\|+(1+2\pi)\|u_x(x,t)\|,$$

$$\|e(\BG_\alpha(\BU_t))\|\leq \|\Be_\alpha\|+(1+2\pi)\|u_x(x,t)\|$$
and
$$\|u_t\|\leq \|u\|+\|u(x,t)\|.$$
Utilizing now (\ref{3}) and taking into account the last three inequalities we arrive at

$$\|\BG_\alpha\|^2\leq 2\|\BG_\alpha(\BU_t)\|^2+2(1+2\pi)^2\|u_x(x,t)\|^2\leq $$
$$\leq \frac{200}{h}\|e(\BG_\alpha(\BU_{t}))\|\cdot\|u_t\|+200\|e(\BG_\alpha(\BU_{t}))\|^2+2(1+2\pi)^2\|u_x(x,t)\|^2\leq$$
$$\leq \frac{200}{h}\big(\|\Be_\alpha\|+(1+2\pi)\|u_x(x,t)\|\big)\big(\|u\|+\|u(x,t)\|\big)+$$

\begin{equation}
\label{4}
+200\big(\|\Be_\alpha\|+(1+2\pi)\|u_x(x,t)\|\big)^2+2(1+2\pi)^2\|u_x(x,t)\|^2
\end{equation}

We complete the proof integrating (\ref{4}) in $t$ over $[0,L]$ and then estimating each summand applying the Schwartz inequality as follows
$$\int_0^L\|u_x(x,t)\|\ud t\leq \Big(L\int_0^L\|u_x(x,t)\|^2\ud t\Big)^{\frac{1}{2}}=$$
$$=\Big(L^2\int_0^L\int_0^h u_x^2(x,t)\ud x\ud t\Big)^{\frac{1}{2}}=L\|u_x\|\leq L\|\Be_\alpha\|,$$
similarly
$$\int_0^L\|u(x,t)\|\ud t\leq L\|u\|,$$
$$\int_0^L\|u_x(x,t)\|^2\ud t=L\|u_x\|^2\leq L\|\Be_\alpha\|^2,$$

$$\int_0^L\|u_x(x,t)\|\|u(x,t)\|\ud t\leq \Big(\int_0^L\|u_x(x,t)\|^2\ud t\cdot \int_0^L\|u(x,t)\|^2\ud t\Big)^{\frac{1}{2}}=$$
$$=L\|u_x\|\|u\|\leq L\|\Be_\alpha\|\|u\|.$$

\end{proof}

Let
\[
\BG_{*}=\mat{u_{x}}{u_{y}-v}{v_{x}}{v_{y}+u},\qquad
\Be_{*}=\hf(\BG_{*}+\BG_{*}^{T}).
\]
\begin{theorem}
\label{th:hard}
Suppose that the vector field $\BU=(u,v)\in C^{1}([0,h]\times[0,2\pi];\bb{R}^{2})$ satisfies
$\BU(x,0)=\BU(x,2\pi)$. Then there exist absolute numerical constants $\Gs>0$
and $C_{0}>0$ such that for
any $h\in(0,\Gs)$
\[
\|\BG_{*}\|^{2}\le C_{0}\left(\|\Be_{*}\|^{2}+\|\Be_{*}\|\frac{\|u\|}{h}+\|v\|^{2}\right).
\]
\end{theorem}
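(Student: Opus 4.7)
The approach parallels the proof of Theorem~\ref{th:pbc}: first prove a zero-boundary-condition analog by the methods of Theorem~\ref{th:basicineq}, and then average over translations in $y$ to recover the periodic case. The essential observation is that $\BG_*$ is a lower-order perturbation of $\BG_1$ (the case $\Ga=1$): writing $E_{12}$ for the matrix with $1$ in position $(1,2)$ and zeros elsewhere, $\BG_* = \BG_1 - vE_{12}$ and $\Be_* = \Be_1 - \tfrac{1}{2} v(E_{12}+E_{21})$.

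Direct algebraic comparison gives $\|\BG_*\|^2 = \|\Be_*\|^2 + \tfrac{1}{2}\|u_y - v_x - v\|^2$; expanding and applying Cauchy-Schwarz on the cross term with $v$ yields $\|\BG_*\|^2 \le \|\Be_*\|^2 + 3\|\omega'\|^2 + \tfrac{3}{2}\|v\|^2$, where $\omega'=\tfrac{1}{2}(v_x - u_y)$ is the skew part of $\BG_1$. Since $2\|\omega'\|^2 = \|\BG_1\|^2 - \|\Be_1\|^2 \le \|\BG_1\|^2$, Theorem~\ref{th:pbc} at $\Ga=1$ gives $\|\omega'\|^2 \le \tfrac{1}{2} C_0\|\Be_1\|(\|u\|/h + \|\Be_1\|)$. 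A direct expansion of $\|\Be_1\|^2$ in terms of $\|\Be_*\|^2$ and cross terms involving $v$ produces $\|\Be_1\| \le C(\|\Be_*\|+\|v\|)$. Substituting yields the preliminary bound
\[
\|\BG_*\|^2 \le C\bigl(\|\Be_*\|^2 + \|v\|^2 + \|\Be_*\|\|u\|/h + \|v\|\|u\|/h\bigr).
\]
All terms fit the statement except the cross term $\|v\|\|u\|/h$, which must be eliminated.

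To handle this obstruction I would adapt the translation trick from Theorem~\ref{th:pbc}. For each $t\in[0,2\pi]$ set $\BU_t = (u-u(x,t),\, v + yu(x,t))$ on $\GO_t = [0,h]\times[t,t+2\pi]$, so that the first component of $\BU_t$ vanishes on the horizontal boundary of $\GO_t$ by periodicity. One then applies a zero-BC version of the target inequality to $\BU_t$ on $\GO_t$, estimates the corrections $\|\BG_*(\BU_t)-\BG_*\|$ and $\|\Be_*(\BU_t)-\Be_*\|$ by quantities involving $\|u(x,t)\|$ and $\|u_x(x,t)\|$, and integrates in $t$ over $[0,2\pi]$. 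The Cauchy-Schwarz bounds $\int_0^{2\pi}\|u_x(x,t)\|\,dt \le \sqrt{2\pi}\|u_x\|\le\sqrt{2\pi}\|\Be_*\|$ and $\int_0^{2\pi}\|u(x,t)\|\,dt \le \sqrt{2\pi}\|u\|$ collapse the spurious cross term $\|v\|\|u\|/h$ into the admissible combination $\|\Be_*\|\|u\|/h + \|v\|^2$.

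The zero-BC analog itself (assuming $u(x,0)=u(x,L)=0$) follows the structure of Theorem~\ref{th:basicineq}. The relevant identity is
\[
\GD u = (e_{11}-e_{22})_x + 2(e_{12})_y + (e_{11}+e_{22}) - u,
\]
with $e_{ij}$ the components of $\Be_*$. Letting $w$ be the harmonic extension of $u$ on $\GO$, multiplying $\GD(u-w)$ by $u-w$ and integrating by parts gives $\|\Grad(u-w)\| \le C\bigl(\|\Be_*\| + (h/\pi)\|u\|\bigr)$, where Poincar\'e is applicable because $u-w$ vanishes on all of $\partial\GO$; Lemma~\ref{lem:harmon} applied to $w$ then produces the required bound on $\|w_y\|$ and hence on $\|\BG_*\|^2$. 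The principal technical obstacle is controlling the new integral $\int(u-w)(-u)$ generated by the extra $-u$ term in the Laplacian identity: naively it contributes a $\|u\|^2/h$ piece not present in the target, and absorbing it requires precisely the hypothesis $h<\sigma$ together with the Poincar\'e estimate $\|u-w\|\le (h/\pi)\|\Grad(u-w)\|$ that exploits the vanishing of $u-w$ on the full boundary.
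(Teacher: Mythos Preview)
Your proposal has a genuine gap. The translation trick from Theorem~\ref{th:pbc} does not transfer to $\BG_*$: with $\BU_t=(u-u(x,t),\,v+yu(x,t))$ the $(1,2)$ entry of $\BG_*$ is $u_y-v$, so the modification of $v$ produces
\[
\Be_*(\BU_t)=\Be_*+\left[\begin{array}{cc}-u_x(x,t)&\tfrac12 y\bigl(u_x(x,t)-u(x,t)\bigr)\\[0.5ex]\tfrac12 y\bigl(u_x(x,t)-u(x,t)\bigr)&0\end{array}\right],
\]
and the correction now involves $\|u(x,t)\|$, not just $\|u_x(x,t)\|$. After integrating in $t$ the term $\frac{1}{h}\|\Be_*(\BU_t)\|\,\|u_t\|$ therefore contributes $\frac{C}{h}\|u\|^2$, which is strictly worse than the $\|v\|\,\|u\|/h$ you set out to eliminate. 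No choice of $y$-dependent correction to $v$ avoids this, because it must simultaneously keep the $(2,2)$ entry $v_y+u$ and the $(1,2)$ entry $u_y-v$ unchanged to first order, and those two requirements conflict.

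The paper bypasses the translation entirely and uses a different device: it applies Theorem~\ref{th:pbc} (which already handles periodicity) not to $\BU$ but to $\BV=(u,(1-x)v)$. The point is that
\[
\Be_1(\BV)=\Be_*+\left[\begin{array}{cc}0&-\tfrac{x}{2}v_x\\[0.5ex]-\tfrac{x}{2}v_x&-xv_y\end{array}\right],
\]
so $\|\Be_1(\BV)-\Be_*\|\le h(\|v_x\|+\|v_y\|)$: the correction is of order $h$, which cancels the $1/h$ when it meets $\|u\|/h$ in Theorem~\ref{th:pbc}. This turns the dangerous term into $\|u\|(\|v_x\|+\|v_y\|)\le \|u\|\|\BG_*\|+\|u\|\|\Be_*\|+\|u\|^2$, each of which is either absorbable by Young or reduces to controlling $\|u\|^2$. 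The second ingredient you are missing is the identity (obtained by integrating by parts in $y$ using periodicity)
\[
\|u\|^2=(u,u+v_y)+(u_y-v,v)+\|v\|^2\le \|u\|\|\Be_*\|+\|\BG_*\|\|v\|+\|v\|^2,
\]
which is what converts $\|u\|^2$ into $\|v\|^2$ plus absorbable cross terms. This identity, not the translation trick, is how $\|v\|^2$ enters the final bound; without it your zero-BC analog terminates with $\|u\|^2$ on the right and there is no path to the stated inequality.
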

\begin{proof}
Let $\BV=(u,(1-x)v)$, and let
\[
\BG_{1}=G_{1}(\BV),\qquad\Be_{1}=\hf(\BG_{1}+\BG_{1}^{T}).
\]
We compute
\[
\BG_{*}=\BG_{1}+\mat{0}{-v}{v+xv_{x}}{xv_{y}},\qquad
\Be_{1}=\Be_{*}+\mat{0}{-\dfrac{x}{2}v_{x}}{-\dfrac{x}{2}v_{x}}{-xv_{y}}.
\]
Thus we immediately obtain that
\[
\|\BG_{*}\|^{2}\le
6(\|\BG_{1}\|^{2}+\|v\|^{2}+h^{2}(\|v_{x}\|^{2}+\|v_{y}\|^{2}).
\]
and
\begin{equation}
  \label{eest}
\|\Be_{1}\|\le\|\Be_{*}\|+h(\|v_{x}\|+\|v_{y}\|)
\end{equation}
We also estimate
\begin{equation}
  \label{gradv}
\|v_{x}\|\le\|\BG_{*}\|,\qquad\|v_{y}\|\le\|v_{y}+u\|+\|u\|\le\|\Be_{*}\|+\|u\|.
\end{equation}
Now we apply Theorem~\ref{th:pbc} to the vector field $\BV$ and $\Ga=1$, and obtain
\[
\|\BG_{*}\|^{2}\le C_{0}\left(\|\Be_{1}\|^{2}+\|\Be_{1}\|\frac{\|u\|}{h}+
\|v\|^{2}+h^{2}(\|v_{x}\|^{2}+\|v_{y}\|^{2})\right).
\]
Next we apply (\ref{eest}) to the terms containing $\|\Be_{1}\|$ and obtain
\[
\|\BG_{*}\|^{2}\le C_{0}\left(\|\Be_{*}\|^{2}+\|\Be_{*}\|\frac{\|u\|}{h}+
\|u\|(\|v_{x}\|+\|v_{y}\|)+\|v\|^{2}+h^{2}(\|v_{x}\|^{2}+\|v_{y}\|^{2})\right).
\]
Applying the inequalities (\ref{gradv}) to the terms containing $\|v_{x}\|$
and $\|v_{y}\|$ we obtain
\[
\|\BG_{*}\|^{2}\le C_{0}\left(\|\Be_{*}\|^{2}+\|\Be_{*}\|\frac{\|u\|}{h}+
\|u\|\|\BG_{*}\|+\|u\|^{2}+\|v\|^{2}+h^{2}\|\BG_{*}\|^{2}\right).
\]
When $h^{2}<1/(2C_{0})$ we get the inequality
\[
\|\BG_{*}\|^{2}\le C_{0}\left(\|\Be_{*}\|^{2}+\|\Be_{*}\|\frac{\|u\|}{h}+
\|u\|\|\BG_{*}\|+\|u\|^{2}+\|v\|^{2}\right).
\]
We also have
\[
C_{0}\|u\|\|\BG_{*}\|\le\hf\|\BG_{*}\|^{2}+\frac{C_{0}^{2}}{2}\|u\|^{2}.
\]
Thus we obtain
\begin{equation}
  \label{penult}
\|\BG_{*}\|^{2}\le C_{0}\left(\|\Be_{*}\|^{2}+\|\Be_{*}\|\frac{\|u\|}{h}+\|u\|^{2}+\|v\|^{2}\right).
\end{equation}
To finish the proof of the theorem we write $\|u\|^{2}$ using integration by
parts and periodic \bc s:
\[
\|u\|^{2}=(u,u+v_{y})+(u_{y}-v,v)+\|v\|^{2}.
\]
Thus,
\[
\|u\|^{2}\le\|u\|\|\Be_{*}\|+\|\BG_{*}\|\|v\|+\|v\|^{2}.
\]
Thus, using $2\|u\|\|\Be_{*}\|\le\|u\|^{2}+\|\Be_{*}\|^{2}$ we obtain
\begin{equation}
  \label{uest}
\|u\|^{2}\le\|\Be_{*}\|^{2}+2\|\BG_{*}\|\|v\|+2\|v\|^{2}.
\end{equation}
Applying this inequality to the $\|u\|^{2}$ term in (\ref{penult}) we obtain
\[
\|\BG_{*}\|^{2}\le C_{0}\left(\|\Be_{*}\|^{2}+\|\Be_{*}\|\frac{\|u\|}{h}+\|\BG_{*}\|\|v\|+
\|v\|^{2}\right).
\]
from which the theorem follows.
\end{proof}

\subsection{Proof of Lemma~\ref{lem:KTI} and the Korn inequality}
\label{sub:preK}
  \textbf{Step 1.} First we prove the analog of Lemma~\ref{lem:KTI} in which $\Grad\Bu$ and $e(\Bu)$ are replaced with
\[
\BA=\left[
\begin{array}{ccc}
  u_{r,r} & u_{r,\Gth}-u_{\Gth} & u_{r,z}\\
  u_{\Gth,r} & u_{\Gth,\Gth}+u_{r} & u_{\Gth,z}\\
  u_{z,r} & u_{z,\Gth} & u_{z,z}
\end{array}
\right],\qquad e(\BA)=\hf(\BA+\BA^{T}).
\]
respectively, which is
\begin{equation}
  \label{nor}
\|\BA\|^{2}\le C(L)\|e(\BA)\|\left(\|e(\BA)\|+\frac{\|u_{r}\|}{h}\right).
\end{equation}
To prove inequality (\ref{nor}) we need to estimate three quantities
\[
G_{12}^{2}=\|u_{\Gth,r}\|^{2}+\|u_{r,\Gth}-u_{\Gth}\|^{2},\qquad
G_{13}^{2}=\|u_{r,z}\|^{2}+\|u_{z,r}\|^{2},\qquad
G_{23}^{2}=\|u_{z,\Gth}\|^{2}+\|u_{\Gth,z}\|^{2}
\]
In terms of
\[
E_{12}^{2}=\|u_{\Gth,r}+u_{r,\Gth}-u_{\Gth}\|^{2},\qquad
E_{13}^{2}=\|u_{r,z}+u_{z,r}\|^{2},\qquad
E_{23}^{2}=\|u_{z,\Gth}+u_{\Gth,z}\|^{2},
\]
\[
E_{11}^{2}=G_{11}^{2}=\|u_{r,r}\|^{2},\qquad
E_{22}^{2}=G_{22}^{2}=\|u_{\Gth,\Gth}+u_{r}\|^{2},\qquad
E_{33}^{2}=G_{33}^{2}=\|u_{z,z}\|^{2}.
\]

\textbf{Step 2.} In this step we prove the inequality (\ref{rtheta}) in
Lemma~\ref{lem:KTI}. We start with $G_{23}$. Integration by parts, using the
\bc s $u_{\Gth}=0$ at $z=0$ and $z=L$ and the periodicity in $\Gth$ gives
\[
|(u_{z,\Gth},u_{\Gth,z})|=|(u_{z,z},u_{\Gth,\Gth})|\le\|u_{z,z}\|\|u_{\Gth,\Gth}\|\le
E_{33}(E_{22}+\|u_{r}\|).
\]
Therefore,
\begin{equation}
  \label{G23}
G_{23}^{2}=E_{23}^{2}-2(u_{z,\Gth},u_{\Gth,z})\le
E_{23}^{2}+E_{22}^{2}+E_{33}^{2}+2E_{33}\|u_{r}\|\le2\|e(\BA)\|(\|e(\BA)\|+\|u_{r}\|).
\end{equation}

\textbf{Step 3.} Next we estimate $G_{13}$. Let us fix $\Gth\in[0,2\pi]$ arbitrarily. Next we
apply Theorem~\ref{th:basicineq} to the function
\[
\Bu(r,z)=(u_{r}(r,\Gth,z),u_{z}(r,\Gth,z))
\]
and
$\Ga=0$. We obtain, integrating the
inequality over $\Gth$ as using the Cauchy-Schwartz for the product term
\begin{equation}
  \label{G13}
G_{13}^{2}\le C_{0}\left(E_{11}^{2}+E_{13}^{2}+E_{33}^{2}+
\frac{\|u_{r}\|}{h}(E_{11}+E_{13}+E_{33})\right)\le
C_{0}\|e(\BA)\|\left(\|e(\BA)\|+\frac{\|u_{r}\|}{h}\right),
\end{equation}
where $C_{0}$ is an absolute numerical constant, independent of $h$ and $L$.

\textbf{Step 4.} Finally we estimate $G_{12}$. Let us fix $z\in[0,L]$
arbitrarily. We apply Theorem~\ref{th:hard} to the function
\[
\Bu(r,\Gth)=(u_{r}(r,\Gth,z),u_{\Gth}(r,\Gth,z))
\]
We obtain, integrating the
inequality over $z$ and using the Cauchy-Schwartz for the product term
\[
G_{12}^{2}\le C_{0}\left(\|e(\BA)\|^{2}+\|e(\BA)\|\frac{\|u_{r}\|}{h}+\|u_{\Gth}\|^{2}\right)
\]
We estimate via the 1D Poincar\'e inequality
\begin{equation}
  \label{Poincare}
\|u_{\Gth}\|^{2}\le\frac{L^{2}}{\pi^{2}}\|u_{\Gth,z}\|^{2}\le\frac{L^{2}}{\pi^{2}}G_{23}^{2}
\le \frac{2L^{2}}{\pi^{2}}(\|e(\BA)\|^{2}+\|e(\BA)\|\|u_{r}\|).
\end{equation}
Thus, there exists a constant $C(L)\le C_{0}(L^{2}(\Gs+1)+1)$ such that
\[
G_{12}^{2}\le C(L)\|e(\BA)\|\left(\|e(\BA)\|+\frac{\|u_{r}\|}{h}\right).
\]
Next we prove the analog of the Korn inequality in which again $\Grad\Bu$ and $e(\Bu)$ are replaced with $\BA$ and $e(\BA)$ respectively. Integrating the inequality (\ref{uest}) in $z$ and using Cauchy-Schwartz for
  the product term we obtain
\[
\|u_{r}\|^{2}\le\|e(\BA)\|^{2}+2\|\BA\|\|u_{\Gth}\|+2\|u_{\Gth}\|^{2}\le
\|e(\BA)\|^{2}+\Ge^{2}\|\BA\|^{2}+\left(2+\nth{\Ge^{2}}\right)\|u_{\Gth}\|^{2}
\]
for any $\Ge>0$. The small parameter $\Ge\in(0,1)$ will be chosen later in an
asymptotically optimal way. Applying (\ref{Poincare}) we obtain for
sufficiently small $\Ge$
\[
\|u_{r}\|^{2}\le\left(\frac{L^{2}}{\Ge^{2}}+1\right)\|e(\BA)\|^{2}+\Ge^{2}\|\BA\|^{2}+
\frac{L^{2}}{\Ge^{2}}\|e(\BA)\|\|u_{r}\|.
\]
Therefore,
\[
\|u_{r}\|^{2}\le2\left(\frac{L^{2}}{\Ge^{2}}+1\right)^{2}\|e(\BA)\|^{2}+2\Ge^{2}\|\BA\|^{2}.
\]
Thus,
\[
\|u_{r}\|\le\sqrt{2}\left(\left(\frac{L^{2}}{\Ge^{2}}+1\right)\|e(\BA)\|+\Ge\|\BA\|\right).
\]
Substituting this inequality to (\ref{nor}) we conclude that there is a
constant $C(L)$, depending only on $L$  such that
\[
\|\BA\|^{2}\le C(L)\left(\nth{h\Ge^{2}}+\frac{\Ge^{2}}{h^{2}}\right)\|e(\BA)\|^{2}.
\]
We now choose $\Ge=h^{1/4}$ to minimize the bound:
\begin{equation}
\label{KornA}
\|\BA\|^{2}\le\frac{C(L)}{h\sqrt{h}}\|e(\BA)\|^{2}.
\end{equation}
Theorem~\ref{th:KI} is now an immediate consequence of (\ref{KornA}) and the obvious inequality
\begin{equation}
\label{nabUandA}
\|e(\BU)-e(\BA)\|^2\leq\|\nabla\BU-\BA\|^2\leq h^2\|\BA\|^2.
\end{equation}
Observe that we get from (\ref{nabUandA}) and (\ref{KornA}) that,
\begin{equation}
\label{eUeA}
\|e(\BU)-e(\BA)\|\leq h\|\BA\|\leq C(L) h^{1/4}\|e(\BA)\|.
\end{equation}
Lemma~\ref{lem:KTI} will now follow from (\ref{nabUandA}), (\ref{eUeA}) and (\ref{nor}).


\end{document}